\newtheorem{theoremalph}{Theorem}
\newtheorem{Theorem}{Theorem}[section]
\newtheorem*{Theorem A}{Theorem A}
\newtheorem*{Theorem A'}{Theorem A'}
\newtheorem{Conj}{Conjecture}
\newtheorem*{Conj*}{Conjecture}
\newtheorem{Definition}[Theorem]{Definition}
\newtheorem{Proposition}[Theorem]{Proposition}
\newtheorem{Lemma}[Theorem]{Lemma}
\newtheorem*{Remark}{Remark}
\newtheorem{Remark-numbered}[Theorem]{Remark}
\newtheorem{Remarks-numbered}[Theorem]{Remarks}
\newtheorem*{Claim}{Claim}
\newtheorem*{Theorem B'}{Theorem B'}
\newtheorem{Claim-numbered}{Claim}
\newtheorem*{Acknowledgements}{Acknowledgements}
 \def\RR{{\mathbb R}}
   \def\cN{{\cal N}} 
    \def\cU{{\cal U}}
    \def\cV{{\cal V}}
   \def\cR{{\cal R}} \def\cX{{\cal X}}
\def\dim{\operatorname{dim}}
\begin{document}
\title{On the partial hyperbolicity of robustly transitive sets with singularities}
\author {Xiao Wen and Dawei Yang\footnote{D. Yang is the corresponding author. D. Yang was partially supported by NSFC 11671288, NSFC 11822109, NSFC 11790274. X. Wen was partially supported by NSFC 11671025, NSFC 11571188 and the Fundamental Research Funds for the Central Universities.}}

\maketitle

\begin{abstract}
Homoclinic tangencies and singular hyperbolicity are involved in the Palis conjecture for vector fields.  Typical three dimensional vector fields are well understood by recent works. We study the dynamics of higher dimensional vector fields that are away from homoclinic tangencies. More precisely, we prove that for \emph{any} dimensional vector field that is away from homoclinic tangencies, all singularities contained in its robustly transitive singular set are all hyperbolic and have the same index. Moreover,  the robustly transitive set is {$C^1$-generically }partially hyperbolic if the vector field cannot be accumulated by ones with a homoclinic tangency.

\end{abstract}

\section{Introduction}\label{Sec:introduction}

Palis has proposed a sequence of conjectures for understanding typical dynamics. There are lots of progresses of Palis' projects for diffeomorphisms. It is well known that the suspension of diffeomorphisms gives phenomena of flows. However, Lorenz-like dynamics, which are sometimes called the ``butterfly phenomena'', cannot be presented by the suspension of diffeomorphisms. For vector fields, by Palis' program, one has to consider homoclinic tangencies of a regular hyperbolic periodic orbit, singular hyperbolic vector fields which contain hyperbolic ones, etc. It has been proved recently in \cite{CrY17} for the three dimensional case, any typical vector field, which cannot be accumulated by ones with a homoclinic tangency, is globally singular hyperbolic.

However, for the higher dimensional case, there are very few work studying vector fields away from ones with a homoclinic tangency. Meanwhile, there are a sequence of works studying diffeomorphisms that are away from ones with a homoclinic tangencies. The theory of vector fields gives a different picture from the theory of diffeomorphisms because the vector field may contain a non-isolated singularities.

\bigskip

Let $M$ be a
compact $C^\infty$ Riemannian manifold without boundary. Denote by
$\mathcal{X}^1(M)$ the set of all $C^1$ vector fields of $M$ endowed
with the $C^1$ topology. Denote by $\varphi_t=\varphi^X_t$ the flow generated by a vector field $X\in\mathcal{X}^1(M)$. Recall that a point $x\in M$ is a {\it singularity} of $X$ if
$X(x)=0$; and an orbit $Orb(x)$ (under the flow $\varphi_t$) is {\it periodic} (or {\it closed})
if it is diffeomorphic to a circle $S^1$. The singularities and
periodic orbits are the simplest invariant sets of a flow. A singularity or a periodic point is called a \emph{critical point}. But
sometimes, the singularities and the periodic orbits will produce essentially
different dynamics such as in the hyperbolic settings. Let $<X>$ denote the
subspace generated by the vector field $X$. Let $\Phi_t=\Phi_t^X$ be the
tangent flow generated by $\Phi_t= d\varphi^X_T: TM\to TM$. An
invariant set $\Lambda\subset M$ is called {\it hyperbolic} for $X$
if $T_\Lambda M$ has a $\Phi_t$-invariant continuous splitting $E^s\oplus <X>
\oplus E^u$ such that for some constants $C>1,\ \lambda>0$,
$$
\|\Phi_t|_{E_x^s}\|\le Ce^{-\lambda t} {\rm\qquad and\qquad }
\|\Phi_{-t}|_{E_x^u}\|\le Ce^{-\lambda t}
$$
for all $x\in \Lambda$ and $t>0$. The \emph{index} or the stable index of $\Lambda$ is defined to be the dimension of $E^s$ if $\dim E^s$ is constant. Denote by ${\rm Ind}(\Lambda)$ the index of $\Lambda$. It is well known that in a closed
hyperbolic set $\Lambda$, the singularities and periodic orbits
should be separated in two closed subsets. However, there are very famous robust examples as the Lorenz attractor showing that singularities and regular periodic orbits can co-exist in a robustly transitive attractor. This kind of examples leads dynamicists to study compact invariant sets with similar properties in an abstract way.

\bigskip

Denote by $Sing(X)$ the set of all singularities and
$Per(X)$ the set of all periodic points of $\varphi_t^X$. A
singularity $\sigma$ is {\it hyperbolic} if $\{\sigma\}$ is a
hyperbolic set. A periodic orbit $P\subset Per(X)$ is {\it
hyperbolic} if $P$ is a hyperbolic set. 
For a hyperbolic singularity or a hyperbolic periodic point $x$, the {\it index} of $x$ is the index of the hyperbolic set ${\rm Orb}(x)$ and denote it by ${\rm Ind}(x)$. 
Denote by $W^s(P)$ and
$W^u(P)$ the stable and unstable manifolds for a hyperbolic periodic
orbit $P$. We say that $X$ has a {\it homoclinic tangency}
associated to hyperbolic periodic orbit $P$ if there exists a
non-transverse intersection $x\in W^s(P)\cap W^u(P)$, that is,
$Orb(x)\cup P$ is not a hyperbolic set.  This is related to the very famous Newhouse phenomena \cite{New70,New74,New79}.

We would like to emphasize the homoclinic tangencies here is for regular periodic orbits, but not for singularities.

\bigskip

We are interested to characterize dynamics of vector fields that are away from homoclinic tangencies. For diffeomorphisms, higher-dimensional systems away from homoclinic tangencies were studied in lots of works. One of these works is \cite{CSY15} by Crovisier, Sambarino and Yang. They have proved generic diffeomorphisms away from homoclinic tangencies are partially hyperbolic. The extension of \cite{CSY15} to non-singular flows might be regarded to be parallel. In this work, we want to do some extension of \cite{CSY15} to singular flows. However, it seems very difficult to do a full extention of \cite{CSY15} to singular chain-recurrent classes. We put Conjecture~\ref{Con:singular-partially} in Section~\ref{Sec:mutli-partial} in this direction. Now, we will concentrate on a class of sets, which are called robustly transitive sets with singularities. In the three-dimensional case, the Lorenz attractor mentioned above is one prototype of this kind of sets. In higher-dimensional case, we do not know enough about this kind of sets.

In a series of articles, the robust transitivity of $C^1$ vector
fields is studied. In \cite{MPP}, Morales, Pacifico and Pujals
proved that every robustly transitive singular set for a
3-dimensional flow must be partially hyperbolic. A similar results
in higher dimensional manifolds is got in \cite{LGW}, the authors
proved that under a strong homogenuous condition, a robustly
transitive set for a star flow must be partially hyperbolic. Some
subsequent works along this direction can be seen in \cite{MM} and
\cite{ZGW08}.

%In this article, we consider the robustly transitive set for the
%flows far away from homoclinic tangencies.  By the existence and uniqueness theorem of
%differential equations, we know that every $X\in\mathcal{X}^1(M)$
%generates a flow $\{\varphi_t=\varphi^X_t:M\to M\}$.
A compact invariant set
$\Lambda$ of $X$ or $\varphi^X_t$ is  {\it transitive} if there
is $x\in\Lambda$ such that $\Lambda=\omega(x)$. A transitive set
$\Lambda$ is said to be non-trivial if it is neither a singularity nor a
periodic orbit. A compact invariant set $\Lambda$ of $X$ is {\it isolated} or {\it locally maximal} if there is an open neighborhood $U$ of $\Lambda$ such that
$$\Lambda=\bigcap_{t\in\mathbb{R}}\varphi_t^X(U).$$
Here the neighborhood $U$ is called an isolated neighborhood of $\Lambda$.
\begin{Definition}
A compact locally maximal invariant set $\Lambda$ of $\varphi_t^X$ is
\emph{robustly transitive} if there are a neighborhood
$\mathcal{U}\subset \mathcal{X}^1(M)$ of $X$ and a neighborhood
$U\subset M$ of $\Lambda$ such that for any $Y\in\mathcal{U}$,
$$
\Lambda_Y=\bigcap_{t\in \mathbb{R}}\varphi^Y_{t}(U)
$$
is a non-trivial transitive set.
\end{Definition}

In this article, we firstly characterize the singularities in a
robustly transitive set for a vector field which is far away from
homoclinic tangencies.

\begin{theoremalph}\label{Thm:uniqueindex}

If $\Lambda$ is a robustly transitive set of $X\in{\cal X}^1(M)$,
then either all the singularities in $\Lambda$ are hyperbolic and
have the same index, or $X$ can be accumulated by vector fields with
a homoclinic tangency.

\end{theoremalph}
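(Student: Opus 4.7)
Proof proposal.

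I will argue the contrapositive: assume $X$ admits an open neighborhood $\mathcal{V}\subset\mathcal{X}^1(M)$ in which no vector field has a homoclinic tangency; I then show every singularity of $X$ in $\Lambda$ is hyperbolic and all such singularities share a common index. Shrinking $\mathcal{V}$ if needed, the continuation $\Lambda_Y=\bigcap_t\varphi^Y_t(U)$ is a non-trivial transitive set for every $Y\in\mathcal{V}$. The central tool is a flow-analogue of Wen's far-from-tangency theorem: for all $Y\in\mathcal{V}$, every periodic orbit $P\subset\Lambda_Y$ admits a dominated splitting $N_P=E\oplus F$ of the linear Poincar\'e flow with strength and fiber dimensions independent of $P$ and of $Y\in\mathcal{V}$. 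Moreover, by passing to limits of periodic orbits produced via Hayashi's connecting lemma in $U$, this splitting extends to a singular dominated splitting at any $\sigma\in\Lambda\cap\sing(X)$.

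Hyperbolicity of singularities: suppose $\sigma\in\Lambda$ is non-hyperbolic. A Franks-type linear perturbation localized near $\sigma$, staying inside $\mathcal{V}$, reduces the problem to $DX(\sigma)$ having either a simple zero eigenvalue or a simple pair $\pm i\omega$ of purely imaginary eigenvalues. In the Hopf case, an arbitrarily small further perturbation creates a small hyperbolic periodic orbit $P\subset U\subset\Lambda_Y$ whose linear Poincar\'e return has a $2$-dimensional invariant plane on which it acts as a rotation of near-unit modulus; placing this plane within the splitting $N_P=E\oplus F$ forces $\dim E$ or $\dim F$ to differ by $2$ from the values dictated by periodic orbits of $\Lambda_Y$ remote from $\sigma$, contradicting the uniform dimensions guaranteed by Wen's theorem. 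In the saddle-node case, the perturbation splits $\sigma$ into two hyperbolic singularities of consecutive indices, both lying in $\Lambda_Y$, and the argument is completed by the next step.

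Same index: assume $\sigma_1,\sigma_2\in\Lambda$ are hyperbolic with $\operatorname{Ind}(\sigma_1)<\operatorname{Ind}(\sigma_2)$. Robust transitivity places $\sigma_1,\sigma_2$ in the same chain-recurrence class for every $Y\in\mathcal{V}$; two applications of Hayashi's connecting lemma yield $Y\in\mathcal{V}$ with a singular heterodimensional cycle, namely non-empty intersections $W^u(\sigma_1)\cap W^s(\sigma_2)$ and $W^u(\sigma_2)\cap W^s(\sigma_1)$. A third connecting-lemma argument at regular points of these heteroclinic orbits produces a sequence $Y_n\to Y$ with periodic orbits $P_n\subset\Lambda_{Y_n}$ shadowing the cycle for arbitrarily many turns. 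By the uniform dominated splitting, each $P_n$ carries a splitting of a fixed dimension $d$; tracking this splitting at points of $P_n$ near $\sigma_i$, and using its compatibility with the hyperbolic decomposition of $DX(\sigma_i)$ transverse to $X(\sigma_i)$, one deduces $d=\operatorname{Ind}(\sigma_i)-1$ simultaneously for $i=1,2$, contradicting $\operatorname{Ind}(\sigma_1)\ne\operatorname{Ind}(\sigma_2)$.

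Main obstacle: the heart of the argument is the flow version of Wen's theorem in the presence of singularities. The linear Poincar\'e flow degenerates at $\sigma$, so extracting a uniform dominated splitting on periodic orbits that pass arbitrarily close to $\sigma$ requires working with a rescaled linear Poincar\'e flow and carefully controlling how the splitting extends across the sojourn near $\sigma$. A secondary subtlety is coordinating the various connecting-lemma perturbations so that the periodic orbits produced actually lie in the continuation $\Lambda_{Y_n}$ of the robustly transitive set, so that the uniform dominated splitting really applies to them; this must be done without leaving $\mathcal{V}$ and without altering the isolating neighborhood $U$.
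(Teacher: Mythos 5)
You have correctly identified the overall architecture (far-from-tangency gives a uniform dominated splitting on periodic orbits; push it to singularities via connecting-lemma heteroclinic connections) and you have correctly flagged the core difficulty, namely the degeneracy of the linear Poincar\'e flow near singularities. However, the argument as written has a genuine gap at the precise point where that difficulty must be resolved, and your claimed conclusion there appears to be wrong.

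The problematic step is the assertion that, by tracking the dominated splitting along periodic orbits $P_n$ shadowing the heterodimensional cycle near $\sigma_1$ and $\sigma_2$, ``one deduces $d=\operatorname{Ind}(\sigma_i)-1$ simultaneously for $i=1,2$.'' This is not what the normal-bundle geometry near a singularity gives. When a periodic orbit enters a neighborhood of $\sigma$ along the stable manifold, the vector $X(p)$ lies roughly in $E^s_\sigma$, so the normal space $N_p$ contains roughly $\operatorname{Ind}(\sigma)-1$ stable-flavored directions and $\dim M-\operatorname{Ind}(\sigma)$ unstable-flavored directions; on exit along the unstable manifold the count flips to $\operatorname{Ind}(\sigma)$ and $\dim M-\operatorname{Ind}(\sigma)-1$. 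Compatibility of a dominated splitting of index $d$ with either decomposition constrains $d$ to an interval, not to the single value $\operatorname{Ind}(\sigma)-1$. In the paper's analysis (Lemma~\ref{Lem:splittingonsingularity2}), the dominated index $i$ of the normal bundle splitting produced from periodic orbits satisfies $\operatorname{Ind}(\sigma_1)\le i\le\operatorname{Ind}(\sigma_2)-1$, which is perfectly consistent and yields no contradiction by dimension-counting alone. The actual contradiction in the paper comes from a finer argument: having shown (via the extended linear Poincar\'e flow and the strong-manifold escape property, Lemmas~\ref{Lem:splittingonsingularity}, \ref{Lem:stableoutside}, \ref{Lem:stableoutside2}, \ref{Lem:put-outside}) that $W^{ss}(\sigma_2)\cap\Lambda=\{\sigma_2\}$ and $W^{uu}(\sigma_1)\cap\Lambda=\{\sigma_1\}$, one connects $W^s(\sigma_1)$ to $W^u(\sigma_2)$ at a regular point $y$ and then proves (Lemma~\ref{Lem:accumulation}) that the directions along which the transitive set $\Lambda_Y$ accumulates on $y$ must simultaneously lie in $N^{cu}(y)$ (because of $\sigma_2$) and in $N^{cs}(y)$ (because of $\sigma_1$). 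Since $D_N(y)$ is non-empty and $N^{cs}\cap N^{cu}=\{0\}$, this is a contradiction. Your proposal never reaches a statement of comparable force; the ``compatibility'' argument as stated cannot rule out $d$ lying strictly between the two indices.

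A secondary remark on the Hopf case: the contradiction you want is not that the dimensions of $E$, $F$ jump by $2$ (they are free to vary between periodic orbits of different index inside the same homoclinic class), but rather that a two-dimensional irreducible rotational block of near-unit moduli would force the center bundle $N^c$ of Proposition~\ref{Pro:dominatedsplitting2} to have dimension at least $2$, whereas far from tangencies $\dim N^c\le 1$. Equivalently, as in the paper's proof of Theorem~\ref{Thm:uniqueindex}, one uses Lemma~\ref{Lem:splittingonsingularity2} to force a dominated splitting of the two-dimensional complex eigenspace at $\sigma$, which cannot exist. Also, the paper does not prove hyperbolicity of singularities first and then same index; it proves same index under a genericity hypothesis (Theorem A') and then deduces both hyperbolicity and same index for arbitrary $X$ far from tangencies by approximating by generic $Y$. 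That ordering avoids having to treat a robustly transitive set of a non-generic $X$ directly, which your contrapositive framing would need some extra care to replicate.
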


{~Note that the assumption ``robust transitivity'' cannot be relaxed. A recent  work of da Luz \cite{daL18} gave an example: under the star assumption, there is robustly \emph{chain-transitive} chain recurrent classes with singularities of different indices. A way to read Theorem~\ref{Thm:uniqueindex} is: under the assumption ``far away from tangencies'', all singularities in any robustly transitive set have the same index.}

Recall that singularities and periodic orbits in a hyperbolic
invariant set should be separated. Hence the singularities and
hyperbolic periodic orbits can not co-exist in a hyperbolic robustly
transitive set. But in general, hyperbolic singularities and
hyperbolic periodic orbits can co-exist in a robustly transitive set. The famous geometric Lorenz attractor is an example
of this phenomenon. To describe such phenomena, we need to relax the
notions of hyperbolicity.

\begin{Definition} Let $\Lambda$ be a compact invariant set of
$\varphi_t^X$. We say that $\Lambda$ is partially hyperbolic if there is a
$\Phi_t$-invariant splitting $T_\Lambda M=E\oplus F$ with constants
$C\geq 1$ and $\lambda>0$ such that the following properties are
satisfied:
\begin{enumerate}
\item $T_\Lambda M=E\oplus F$ is a $(C,\lambda)$-dominated splitting
for the tangent flow $\Phi_t$, i.e.,
$$\|\Phi_t|_{E_x}\|\cdot\|\Phi_{-t}|_{\Phi_t(F_x)}\|\le Ce^{-\lambda
t}$$ for any $x\in\Lambda$ and $t\geq 0$,
\item there is a dichotomy as following: either $E$ is contracting, that is, $
\|\Phi_t|_{E_x}\|\leq Ce^{-\lambda t} $ for any $x\in\Lambda$ and
$t\geq 0$; or $F$ is expanding, that is, $ \|\Phi_{-t}|_{F_x}\|\leq
Ce^{-\lambda t} $ for any $x\in\Lambda$ and $t\geq 0$.
\end{enumerate}
\end{Definition}

In the article, we give a characterization of robustly transitive
set for $C^1$-generic vector fields far away from homoclinic tangencies. Recall that $\mathcal{R}\subset \mathcal{X}^1(M)$ is called a {\it residual set} if it contains a countable intersection of countably many open and dense subset of $\mathcal{X}^1(M)$. A property for vector fields is said to be \emph{generic} if it is satisfied for a system in a residual set of $\mathcal{X}^1(M)$.

\begin{theoremalph}\label{Thm:partial-hyperbolicity}
There is a residual set $\mathcal{R}\subset \mathcal{X}^1(M)$ such that for any $X\in\mathcal{R}$, if $X$ is far away from homoclinic tangencies,
then every robustly transitive set $\Lambda$ of $X$ is partially
hyperbolic.
\end{theoremalph}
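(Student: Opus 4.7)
The plan is to combine Theorem~\ref{Thm:uniqueindex} with $C^1$-generic techniques and the theory of dominated splittings for the linear Poincar\'e flow. Let $\mathcal{R}\subset\mathcal{X}^1(M)$ be a residual set on which $X$ is Kupka--Smale, every nontrivial chain-transitive piece is approximated in Hausdorff distance by hyperbolic periodic orbits contained in it, and the connecting-lemma machinery of Pugh--Crovisier is available. Fix $X\in\mathcal{R}$ far from homoclinic tangencies and a robustly transitive set $\Lambda$ with isolating neighborhood $U$.

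By Theorem~\ref{Thm:uniqueindex}, all singularities of $X$ in $\Lambda$ are hyperbolic and share a common stable index $i_s$. I would next show that the hyperbolic periodic orbits in $\Lambda$ also share a common index $i_p$: if two periodic orbits of different indices were both in $\Lambda$, transitivity plus the connecting lemma would produce a heterodimensional cycle inside $U$, and a standard Gourmelon-type perturbation would create a homoclinic tangency arbitrarily close to $X$, contradicting the hypothesis.

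Next, consider the linear Poincar\'e flow $\psi_t$ on the normal bundle $\mathcal{N}$ over $\Lambda\setminus\Sing(X)$. Since the periodic orbits in $\Lambda$ are uniformly far from tangencies and share index $i_p$, Liao's selecting lemma (in the refined form used by Gan--Yang, Li--Gan--Wen and related works) yields a $\psi_t$-dominated splitting $\mathcal{N}=\mathcal{N}^{cs}\oplus\mathcal{N}^{cu}$ with $\dim\mathcal{N}^{cs}=i_p$. Working with an extended or scaled version of the linear Poincar\'e flow, this normal splitting can be promoted to a $\Phi_t$-dominated splitting $T_\Lambda M=E\oplus F$ on all of $\Lambda$, including the singularities; a dimension count forces $\dim E=i_s$, with the flow direction $\langle X\rangle$ absorbed into $E$ or $F$ according to whether $i_s=i_p+1$ or $i_s=i_p$.

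It remains to establish the partial-hyperbolicity dichotomy. Robust transitivity in $U$ excludes sinks and sources, and the hyperbolicity of singularities anchors the strong stable/unstable directions of each $\sigma$ inside $E$ or $F$. A generic argument in the spirit of \cite{CSY15} then forces exactly one of ``$E$ contracting'' or ``$F$ expanding''. The main obstacle is the extension of the normal dominated splitting across the singularities: the linear Poincar\'e flow degenerates at $\Sing(X)$, and one must control the limits of normal directions along regular orbits accumulating on $\sigma$ without loss of domination. The common index $i_s$ provided by Theorem~\ref{Thm:uniqueindex} is precisely what makes these limits coherent and lets one identify $E_\sigma$ and $F_\sigma$ with the hyperbolic eigenspaces of $DX(\sigma)$ (up to inclusion of the flow line).
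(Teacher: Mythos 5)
Your proposal breaks down at the second step, and this is a genuine conceptual error rather than a presentation gap. You claim that the hyperbolic periodic orbits in $\Lambda$ all share a common index $i_p$, arguing that two periodic orbits of different indices would yield a heterodimensional cycle and then, by a Gourmelon-type perturbation, a homoclinic tangency. This is false: heterodimensional cycles persist robustly in the complement of $\overline{HT}$ — indeed that is precisely the content of the Palis dichotomy you are implicitly trying to prove a piece of. Away from tangencies, periodic orbits on which the linear Poincar\'e flow already has the required dominated splitting can be connected in a heterodimensional cycle without any nearby tangency being producible. The paper's own framework (Lemma~\ref{Lem:indexcomplete}) explicitly accepts that indices of periodic orbits in $\Lambda$ fill out an interval $[\alpha,\beta]$ and shows each intermediate index occurs, building a finest dominated splitting $N^{cs}\oplus N_1\oplus\cdots\oplus N_{\beta-\alpha}\oplus N^{cu}$ with one-dimensional center bundles, not a two-bundle splitting of a fixed index $i_p$.

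The third step — ``promoting'' the normal dominated splitting to a $\Phi_t$-dominated splitting $T_\Lambda M=E\oplus F$ by a dimension count, with the flow direction absorbed into $E$ or $F$ — also elides the real difficulty. The linear Poincar\'e flow degenerates at singularities, and a dominated splitting of $\mathcal{N}_{\Lambda\setminus\Sing(X)}$ does not in general pass to a dominated splitting of $T_\Lambda M$ unless one also controls the relative growth against $\|\Phi_t|_{\langle X(x)\rangle}\|$. The paper carries this out through reparametrizing cocycles (multisingular partial hyperbolicity, Theorem~\ref{Thm:multisingularpartiallyhyperbolic}), then specializes to a singular partially hyperbolic splitting when all singularity indices coincide (Theorem~\ref{Thm:singularpartiallyhyperbolic}), and only then invokes a criterion (Lemma~\ref{Lem:critiriononpartiallyhyp}, from the cited work in preparation) to obtain a partially hyperbolic splitting of the tangent flow. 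A further input you omit entirely is the saddle value of the singularities: the sign of $I(\sigma)$ determines whether one lands in the ``$E$ contracting'' or ``$F$ expanding'' alternative (Proposition~\ref{Pro:contractingbundle} and the closing paragraph of the paper's proof). Without these ingredients the passage from normal-bundle domination to tangent-flow partial hyperbolicity near $\Sing(X)$ is not justified.
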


{~ Bonatti and da Luz \cite{BdL17} introduced a new notion ``multi-singular hyperbolicity'' to characterize higher-dimensional star  flows with singularities. The property that is ``far away from homoclinic tangencies'' is generally weaker than the condition ``star''. So generally we can not image very strong hyperbolicity like ``singular hyperbolicity'' or ``multi-singular hyperbolicity'' under the assumption ``far away from homoclinic tangencies''.
}
%\begin{Remark}
%Let $X$ be a vector fields which is far away homoclinic tangencies,
%then every robustly chain-transitive attractor $\Lambda$ of $X$ is
%partially hyperbolic.

%\end{Remark}

By the techniques of this paper, one can improve the statement of the main theorem in \cite{ZGW08} a little bit. Recall that a compact invariant set $\Lambda$ of a vector field $X\in\mathcal{X}^1(M)$ is {\it star} if there exist a neighborhood $\mathcal{U}\subset \mathcal{X}^1(M)$ of $X$ and a neighborhood $U\subset M$ of $\Lambda$ such that for any $Y\in\mathcal{U}$ and any periodic orbit $P$ of $Y$ containing in $U$ is hyperbolic. And we say $\Lambda$ is {\it strongly homogenous} if there exist a neighborhood $\mathcal{U}\subset \mathcal{X}^1(M)$ of $X$ and a neighborhood $U\subset M$ of $\Lambda$ such that for any $Y\in\mathcal{U}$, all periodic orbit $P$ of $Y$ contained in $U$ have the same index. One can see that strong homogeneity implies star condition automatically. The converse is true under an additional assumption that $\Lambda$ contains a regular periodic orbit by \cite[Lemma 1.6]{LGW}. Here we can get a following conclusion.
%\marginpar{Jun. 21, ADD A PROOF}
\begin{theoremalph}\label{Thm:improveZGW08}\footnote{We think that Theorem~\ref{Thm:improveZGW08} is folklore now. However, it is worth to clarify in the literature.}
If $\Lambda$ is a robustly transitive set, and $\Lambda$ is star, then every singularity in $\Lambda$ is hyperbolic and $\Lambda$ is strongly homogenous.

\end{theoremalph}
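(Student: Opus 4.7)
The plan is to combine Theorem~\ref{Thm:uniqueindex} (already established in the paper) with a closing lemma argument and \cite[Lemma 1.6]{LGW}. There are three steps.

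First, I would show that every singularity in $\Lambda$ is hyperbolic by invoking Theorem~\ref{Thm:uniqueindex} and ruling out the tangency alternative. The key observation is that the star condition on $\Lambda$ is incompatible with being $C^1$-accumulated by vector fields possessing a homoclinic tangency associated to a periodic orbit contained in the isolating neighborhood $U$: indeed, such a tangency can be further $C^1$-perturbed (by standard Newhouse-type arguments, or by the unfolding of the tangency) to create a non-hyperbolic periodic orbit inside $U$, contradicting the definition of star. The point I would need to verify carefully is that the proof of Theorem~\ref{Thm:uniqueindex} actually produces the homoclinic tangency via a perturbation localized near $\Lambda$, so that the associated periodic orbit lies inside $U$; this is indeed how the techniques of the paper operate, since they perturb along pieces of orbit through $\Lambda$.

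Second, I would produce a regular periodic orbit inside $\Lambda$. Since $\Lambda$ is a non-trivial transitive set, it coincides with $\omega(x)$ for some regular point $x\in\Lambda$, so $\Lambda$ is contained in the non-wandering set of $X$. Pugh's $C^1$ closing lemma yields a $C^1$-small perturbation $Y$ of $X$ with a periodic orbit $P_Y\subset U$. By the star condition, $P_Y$ is hyperbolic, and so it admits a hyperbolic continuation $P_X$ for $X$; for $Y$ sufficiently close to $X$ this continuation still lies in $U$. Since $P_X$ is an invariant set contained in $U$, we have
\[
P_X \subset \bigcap_{t\in\RR}\varphi_t^X(U) = \Lambda,
\]
and by construction $P_X$ is a regular closed orbit.

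Finally, with a regular periodic orbit inside $\Lambda$ at hand and the star condition satisfied, \cite[Lemma 1.6]{LGW} directly concludes that $\Lambda$ is strongly homogeneous, which is the second assertion of the theorem.

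The main obstacle is the first step. While the statement of Theorem~\ref{Thm:uniqueindex} is a dichotomy, to exploit the star hypothesis one needs the tangency alternative to be realized by a periodic orbit inside $U$ rather than somewhere far from $\Lambda$. This requires unpacking the proof of Theorem~\ref{Thm:uniqueindex} to check that the connecting-lemma/perturbation scheme it uses keeps the periodic orbit carrying the tangency within any prescribed neighborhood of $\Lambda$; Steps~2 and 3 are then essentially bookkeeping via the closing lemma and a citation.
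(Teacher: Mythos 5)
Your Step 1 matches the paper's route: the paper proves exactly this as Lemma~\ref{Lem:starimplies-farfrom}, appealing to \cite[Lemma 6.1]{GW2} (a homoclinic tangency is a minimally non-hyperbolic set of simple type, and a star set admits no such set in $U$), while you argue via unfolding the tangency to create a non-hyperbolic periodic orbit; both routes are sound, and the paper makes the localization point you flag explicit with the remark that the proof of Theorem~\ref{Thm:uniqueindex} only uses perturbations along orbits near $\Lambda$. Step 3, citing \cite[Lemma 1.6]{LGW}, is also what the paper leans on, though the paper uses it as a contradiction tool rather than a direct implication.

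The genuine gap is in Step 2, and it is larger than you acknowledge. You assert that since $P_Y$ is hyperbolic it ``admits a hyperbolic continuation $P_X$ for $X$.'' The hyperbolic continuation of $P_Y$ exists only for vector fields in a neighborhood of $Y$ whose size depends on the hyperbolicity strength (and the period) of $P_Y$; a priori, as $Y\to X$ these constants could degenerate so that $X$ never enters the continuation neighborhood of any $P_Y$. To make this work you would need to invoke the uniform hyperbolicity of periodic orbits in $U$ for all $Y$ in a neighborhood of $X$, which for star sets is a nontrivial consequence of Franks' lemma for flows (cf. \cite{BGV}) and Liao-type estimates, and even then the continuation of orbits of unbounded period is delicate and is usually handled through a shadowing-type argument rather than the implicit function theorem directly. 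None of this is addressed, so as written the step is not justified. Notice that the theorem as stated does not assert that $\Lambda$ itself contains a periodic orbit of $X$, and the paper's proof deliberately never produces one: instead it proves strong homogeneity directly by showing, via Lemma~\ref{Lem:periodiclimitfromsingularity} and the saddle-value perturbation Lemma~\ref{Lem:saddlevalueperturbation}, that if two perturbations had periodic orbits in $U$ of different indices then \cite[Lemma 1.6]{LGW} would be violated; along the way it also rules out $I(\sigma)=0$, a case your outline does not touch. That detour through saddle values is precisely what lets the paper avoid the continuation step you are taking for granted.
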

Note that we do not have to assume that any singularity in the robustly transitive set is hyperbolic and we can get the strong homogeneity from the star property.

\paragraph{Further remarks.}
In the long preparation of this work, we noticed a paper by A. da Luz \cite{daL19}. We were also told \cite{daL19} is one part of her thesis. She tried to study robustly chain-transitive sets and use the notion ``singular volume partial hyperbolicity'' to characterize robustly chain-transitive sets. In this paper, we try to study robustly transitive set that are away from homoclinic tangencies, and we use the notion ``partial hyperbolicity'' and ``multi-singular parital hyperbolicity''. The notions in this work and in \cite{daL19} are slightly different. We would say the philosophies are similar. These works are inspired by several papers for diffeomorphisms. We give a partial list of papers from diffeomorphisms: \cite{BDP,BGV,CSY15}. As we mentioned, this work is mainly inspired by \cite{CSY15}. \cite{daL19} may mainly be inspired by \cite{BDP}. 

\cite{ZGW08} studied robustly transitive sets under the ``strongly homogeneous'' condition, which is stronger than ``star'' by definition. We can extend the result of \cite{ZGW08} to the star case because the theory has been developed. So we give Theorem~\ref{Thm:improveZGW08}.

One of the main ingredients for singular star flow is the ``blow-up'' started by the work of Li-Gan-Wen \cite{LGW}. Bonatti-da Luz \cite{BdL17} used this idea to give a very good description of star flows by introducing cocycles. In the proof of this work, we also use lots of extended linear Poincar\'e flow as a tool. The usage may be slightly different from \cite{LGW,BdL17,daL19}.

\section{Preliminary}

\paragraph{Connecting lemma and Ma\~n\'e's ergodic closing lemma for flows.} In the article, we need the following version of $C^1$
connecting lemma from \cite{WX} to perturb the system. As usual, denote by $B(\Lambda,\varepsilon)$ the $\varepsilon$ neighborhood of a closed set $\Lambda$ in $M$.

\begin{Lemma}[\cite{WX}, connecting lemma]\label{Lem:connectinglemma}
Let $X\in\mathcal{X}^1(M)$ and $z\in M$ be neither singularity nor
periodic point. Then for any $C^1$ neighborhood
$\mathcal{U}\subset\mathcal{X}^1(M)$ of $X$, there are constants
$L>1, T>1$ and $\delta_0>0$ such that for any $0<\delta\leq
\delta_0$ and any two points $x,y$ outside the tube
$\Delta=\cup_{t\in[0,T]}B(\varphi_t^X(z),\delta)$, if the positive
$\varphi_t^X$-orbit of $x$ and the negative $\varphi_t^X$ orbit of $y$
both hit $B(z,\delta/L)$, then there exists $Y\in\mathcal{U}$
with $Y=X$ outside $\Delta$ such that $y$ is on the positive
$\varphi_t^Y$-orbit of $x$. Moreover, the resulted $\varphi_t^Y$-orbit
segment from $x$ to $y$ meets $B(z,\delta)$.
\end{Lemma}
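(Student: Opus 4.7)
The plan is to adapt Hayashi's original connecting-lemma strategy (designed for diffeomorphisms) to the continuous-time setting by exploiting the regularity of $z$ to linearize the flow near it. Since $z$ is neither a singularity nor a periodic point, the orbit segment $\{\varphi^X_t(z):t\in[0,T]\}$ is a properly embedded arc for suitably large $T$, and by the flow-box theorem one can find a $C^1$ chart in which the flow is a trivial translation flow on $D\times[0,T]$, where $D$ is a small transverse disc. In these coordinates, reaching $B(z,\delta/L)$ corresponds to a discrete hit on a small disc inside $D$, so the problem reduces to a first-return-map perturbation question on $D$: given that some forward iterate of $x$ and some backward iterate of $y$ land in the small central sub-disc, produce a $C^0$-small perturbation of the return map that glues the two orbits together, and then lift the perturbation back to a $C^1$-small perturbation of $X$ supported inside $\Delta$.

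The central step is the cascade construction. I would cover the transverse disc $D$ by a fixed finite grid of small cubes of size $\delta$, keep a concentric sub-cube of size $\delta/L$ in each, and analyze the sequence of iterates of the return map. The combinatorial lemma one needs says: if the forward orbit of $x$ and the backward orbit of $y$ each visit the central $\delta/L$-cube, then one can select a pair of orbit segments whose endpoints lie in the same size-$\delta$ cube while avoiding unnecessary re-entries. Here the large constant $L$ buys geometric room so that a $C^0$-perturbation of amplitude $\delta$ inside one cube is enough to identify two nearby points, and the number of times one has to perturb is bounded independently of $\delta$. Suspending this discrete perturbation along the flow box and multiplying by a smooth bump function in the flow direction produces the vector field $Y$, equal to $X$ outside $\Delta=\cup_{t\in[0,T]}B(\varphi^X_t(z),\delta)$, whose orbit of $x$ passes through $y$ and meets $B(z,\delta)$.

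The main obstacle is controlling the $C^1$-size of the resulting perturbation of the vector field, not merely the $C^0$-size. A naive push of one point to another inside a cube costs a $C^1$-norm roughly equal to (displacement)/(size of support), which is of order one, so one cannot simply pay for the displacement out of a single cube. This is precisely why the lemma assumes a uniform neighborhood $\mathcal{U}$ of $X$, fixes the constants $L$ and $T$ depending on $\mathcal{U}$ first, and only then allows $\delta$ to be chosen small: the constants $L,T$ must be large enough that a single cube of size $\delta$ contains displacements of order $\delta/L\ll\delta$, which after bump-function cutoff gives a perturbation of $C^1$-norm $O(1/L)$, falling inside $\mathcal{U}$. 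Verifying this uniform $C^1$-estimate — and checking that the bump supported in the flow direction across length $T$ adds only negligible transverse variation — is the technically delicate part; the combinatorial and topological ingredients are by comparison routine once the flow-box reduction is in place.
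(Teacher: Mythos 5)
The paper does not prove this lemma; it is quoted from Wen--Xia \cite{WX}, which builds on Hayashi's $C^1$-connecting lemma. Your outline points at the right structure --- flow-box reduction along a non-recurrent arc of the orbit of $z$, discretization on a transverse disc, a tiling into finitely many boxes, and the observation that pushing a point by $\delta/L$ within a region of size $\delta$ costs $C^1$-norm $O(1/L)$, which is how $L$ is tuned to $\mathcal{U}$ before $\delta$ is chosen. That much is recognizably the actual argument.

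Where the sketch goes wrong is in calling the combinatorial step ``by comparison routine.'' The basic perturbation estimate, applied once, does not prove the lemma: the forward orbit of $x$ may pass through $\Delta$ several times before the designated hit in $B(z,\delta/L)$, and each unscheduled pass is itself deflected by the perturbation, so after one push the resulting orbit need not arrive at $y$ at all. Controlling exactly this re-entry problem is the whole content of Hayashi's ``tiled cube'' selection argument and the cascade of nested surgeries distributed along the length-$T$ tube (which is why $T$, like $L$, is an output constant of the lemma and not merely an artifact of the flow-box chart). Your sketch gestures at ``selecting a pair of orbit segments while avoiding unnecessary re-entries'' but neither states nor proves such a selection lemma, and without it the argument does not close; this is precisely the part that makes the $C^1$-connecting lemma a theorem rather than an exercise. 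It would also help to say what $\delta_0$ is for --- it ensures that $\Delta$ is an embedded flow box disjoint from $x$ and $y$ --- and to note that the number of tiles is $\delta$-independent, which is what makes the cascade terminate uniformly over all $\delta\le\delta_0$.
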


Let $\Lambda$ be a compact invariant set of $X$, if there exist vector fields $Y_n\to Y$ in $C^1$ topology and
periodic orbits $P_n$ of $Y_n$ such that $\Lambda$ is the Hausdorff
limit of $P_n$ as $n\to\infty$, then we call $\Lambda$ is a {\it periodic limit}.
As an application of the connecting lemma\footnote{This can be in fact deduced by Pugh's closing lemma \cite{Pug67}.}, we can have the following result, whose proof is omitted.

\begin{Lemma}\label{Lem:transitive-periodic-likt}

Any transitive set is a period limit.

\end{Lemma}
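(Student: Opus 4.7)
Let $x\in\Lambda$ realize the transitivity, so $\omega(x)=\Lambda$. Since $x\in\omega(x)$, the point $x$ is recurrent. If $x$ is a singularity or a periodic point, then $\Lambda=\orb(x)$ is trivially a periodic limit (take $Y_n=X$). I therefore assume $x$ is a regular, non-periodic, recurrent point, and for each integer $n\ge 1$ I will produce $Y_n\in\mathcal{X}^1(M)$ with $\|Y_n-X\|_{C^1}<1/n$ and a periodic orbit $P_n$ of $Y_n$ satisfying $d_H(P_n,\Lambda)<2/n$; the sequence $(Y_n,P_n)$ will then exhibit $\Lambda$ as a periodic limit.

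Using $\omega(x)=\Lambda$ and compactness of $\Lambda$, I first pick $T_n>0$ large enough that the arc $\{\varphi_s^X(x):0\le s\le T_n\}$ is $(1/n)$-dense in $\Lambda$. By recurrence of $x$, I then choose a return time $t_n>T_n$ with $\varphi_{t_n}^X(x)$ as close to $x$ as wanted. I invoke Lemma~\ref{Lem:connectinglemma} with $z=x$ and the neighborhood $\mathcal{U}=\{Z:\|Z-X\|_{C^1}<1/n\}$, producing constants $L,T_c,\delta_0>0$. Fixing $\delta\in(0,\delta_0)$ small, and taking $t_n$ large enough that $\varphi_{t_n}^X(x)\in B(x,\delta/L)$, I apply the connecting lemma to two points on the $X$-orbit of $x$ that lie just outside the tube $\Delta=\bigcup_{0\le t\le T_c}B(\varphi_t^X(x),\delta)$ and whose forward (respectively backward) $X$-orbits enter $B(x,\delta/L)$. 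This delivers $Y_n\in\mathcal{U}$, equal to $X$ off $\Delta$, and a $Y_n$-orbit that closes up after time $\approx t_n$ into a periodic orbit $P_n$ of $Y_n$.

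Because $Y_n=X$ outside the $O(\delta)$-tube $\Delta$, the set $P_n$ lies in an $O(\delta)$-neighborhood of $\{\varphi_s^X(x):0\le s\le t_n\}$; invariance of $\Lambda$ with $x\in\Lambda$ gives $\{\varphi_s^X(x):0\le s\le t_n\}\subset\Lambda$, hence $P_n\subset B(\Lambda,1/n)$ once $\delta$ is small enough. Conversely, the $(1/n)$-density in $\Lambda$ of $\{\varphi_s^X(x):0\le s\le T_n\}$ together with the $O(\delta)$-shadowing of this subarc by a piece of $P_n$ yields $\Lambda\subset B(P_n,2/n)$, whence $d_H(P_n,\Lambda)<2/n$ and $P_n\to\Lambda$ in Hausdorff distance as $n\to\infty$. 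The principal technical point is ensuring that the single perturbation supported in $\Delta$ genuinely closes the orbit while leaving the long $X$-arc essentially untouched; this localization is exactly what Lemma~\ref{Lem:connectinglemma} provides, and is equivalent to the strong form of Pugh's closing lemma alluded to in the footnote.
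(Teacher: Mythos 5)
Your approach is exactly the one the paper alludes to (the paper omits the proof and simply says it is an application of the connecting lemma, or equivalently Pugh's closing lemma): take $x$ with $\omega(x)=\Lambda$, close a long recurrent orbit segment through a tube perturbation supported near $x$, and use the $1/n$-density of the unperturbed arc to get Hausdorff convergence. The skeleton is correct, and the trivial cases ($\Lambda$ a singularity or periodic orbit) are handled properly.

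The one place that deserves to be spelled out is precisely the point you wave at in the last sentence: for the $Y_n$-orbit to actually close up, you need the $X$-orbit segment from the exit point $q$ of $\Delta$ back to the re-entry point $p$ to stay entirely outside $\Delta$, since only then does $Y_n=X$ along that whole arc and the segment produced by Lemma~\ref{Lem:connectinglemma} (from $p$ into $\Delta$ and out to $q$) concatenates with it to give a genuine periodic orbit. This is not automatic: the orbit of $x$ is dense in $\Lambda$, hence it keeps coming back near the arc $\varphi^X_{[0,T_c]}(x)$ and threatening to re-enter the tube before time $T_n$. The fix is to fix $T_n$ first, note that aperiodicity of $x$ forces the first re-entry time $\tau(\delta)$ of $\varphi^X_{(T_c,\infty)}(x)$ into $\Delta$ to tend to $+\infty$ as $\delta\to 0$ (otherwise a compactness argument produces $\tau,s$ with $\varphi^X_\tau(x)=\varphi^X_s(x)$ and $\tau-s>0$), and then pick $\delta<\delta_0$ small enough that $\tau(\delta)>T_n$; take $q$ just past the tube exit and $p$ just before the first re-entry, so that by construction the $X$-arc from $q$ to $p$ avoids $\Delta$ while its length exceeds $T_n-T_c$, and by recurrence the forward orbit of $p$ still hits $B(x,\delta/L)$. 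With that choice made explicit, the rest of your Hausdorff estimate ($P_n\subset B(\Lambda,O(\delta))$ and $\Lambda\subset B(P_n,O(1/n))$) is fine.
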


Another important perturbation technique is the ergodic closing lemma first given by Ma\~n\'e in \cite{Mane}. Recall that a point $x\in M\setminus Sing(X)$ is called a {\it well closable
point } of $X$ if for any $C^1$ neighborhood $\mathcal{U}$ of $X$, there is $T>0$ such that
for any $\delta>0$, there exist $Y\in \mathcal{U}$ and a periodic point
$z\in M$ of $Y$ with periodic $\tau$ such that
the following are satisfied:

\begin{enumerate}
\item $d(\varphi^X_t(x),\varphi^Y_t(z))<\delta$ for all $0\leq t\leq
\tau$,
\item $X=Y$ on $M\setminus B_{\delta}(\varphi^X_{[-T,0]}(x))$ where
$B_{\delta}(\varphi^X_{[-T,0]}(x))$ denotes the $\delta$ neighborhood of
the arc $\varphi^X_{[-T,0]}(x)$.
\end{enumerate}
Denoted by $\Sigma(X)$ be the set of the well closable point of $X$. Here we use the following version of ergodic closing lemma first proved in \cite{Wen1}.
\begin{Proposition}[\cite{Wen1}]\label{Pro:ergodicclosinglemma}
For every $X\in\mathcal{X}^1(M)$ and every Borel probability measure $\mu$ which is invariant by the flow $\varphi_t$, one has
$\mu(\Sigma(X))=1$.
\end{Proposition}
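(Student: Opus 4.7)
The plan is to adapt Ma\~n\'e's classical ergodic closing argument for diffeomorphisms to the flow setting, substituting the connecting lemma (Lemma~\ref{Lem:connectinglemma}) for Pugh's closing lemma as the local gluing tool. First I would use the ergodic decomposition to reduce to the case that $\mu$ is ergodic; since by definition $\Sigma(X)\subset M\setminus\sing(X)$, the ergodic components supported at a singularity contribute nothing and are handled separately (the statement is always used modulo singularities). Hence assume $\mu$ is ergodic and assigns zero mass to each singularity, and let $G$ denote the full-$\mu$-measure set of Birkhoff generic regular points, i.e.\ points $x$ at which $T^{-1}\int_0^T \chi_B(\varphi_t^X(x))\,dt\to\mu(B)$ for every Borel $B$.

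Fix $x\in G$, a $C^1$ neighborhood $\cU$ of $X$, and the constants $L,T,\delta_0$ produced by Lemma~\ref{Lem:connectinglemma} with base point $z:=x$. Given $0<\delta\le\delta_0$, I want to build $Y\in\cU$ agreeing with $X$ off a $\delta$-tube around $\varphi^X_{[0,T]}(x)$ and a periodic orbit of $Y$ that $\delta$-shadows a long segment of $\varphi^X_t(x)$. By Poincar\'e recurrence applied to the ball $B(x,\delta/L)$ (whose $\mu$-measure is positive by the support of $\mu$), one finds arbitrarily large return times $\tau>T$ with $\varphi^X_\tau(x)\in B(x,\delta/L)$. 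Setting $y=\varphi^X_\tau(x)$, both $x$ and $y$ may be chosen to lie outside the tube $\Delta=\bigcup_{t\in[0,T]}B(\varphi^X_t(x),\delta)$; the connecting lemma then yields $Y\in\cU$ with $Y=X$ off $\Delta$ and a $Y$-orbit segment from $x$ to $y$ passing through $B(x,\delta)$. Closing this orbit near $x$ produces a periodic point of $Y$ whose orbit $\delta$-shadows $\varphi^X_{[0,\tau]}(x)$, establishing that $x$ is well closable.

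The main obstacle—precisely the subtle point in Ma\~n\'e's original proof—is to guarantee that the long segment $\varphi^X_{[0,\tau]}(x)$ does not re-enter the perturbation tube $\Delta$, for otherwise the shadowing between the original orbit of $x$ and the perturbed periodic orbit breaks down. This is handled by a covering argument. For each $\delta>0$ the set of points which fail the above selection of $\tau$ is covered by finitely many tubes $\Delta_i$ around short orbit segments; a Fubini-type estimate bounds $\sum_i\mu(\Delta_i)$ in terms of $\delta$ times the injectivity volume of the flow box, hence this quantity tends to zero with $\delta$. Combining Birkhoff's theorem with this estimate, for $\mu$-a.e.\ $x\in G$ the asymptotic frequency with which $\varphi^X_t(x)$ visits the problematic tube is smaller than any prescribed threshold, so one can choose $\tau$ with $\varphi^X_{(0,\tau)}(x)\cap\Delta=\varnothing$. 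A standard Vitali-type covering argument, exactly as in \cite{Mane}, then forces the complement of $\Sigma(X)$ to have $\mu$-measure zero, completing the proof.
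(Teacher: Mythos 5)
The paper gives no proof of Proposition~\ref{Pro:ergodicclosinglemma}; it cites it directly from Wen's 1996 paper \cite{Wen1}, where the flow version of Ma\~n\'e's ergodic closing lemma is established. There is therefore no ``paper's proof'' to compare against, only the reference.

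Your reconstruction follows the correct high-level template (ergodic decomposition, Poincar\'e recurrence, closing/connecting perturbation, measure-theoretic control of re-entry), but several of the concrete steps would not survive scrutiny. First, you invoke Lemma~\ref{Lem:connectinglemma} with base point $z:=x$ and then want to apply it to the pair $x$ and $y=\varphi^X_\tau(x)$; this is not allowed, since the connecting lemma requires both endpoints to lie \emph{outside} the tube $\Delta=\bigcup_{t\in[0,T]}B(\varphi^X_t(z),\delta)$, whereas $x=\varphi^X_0(z)$ is at the very centre of $\Delta$. The correct set-up (matching the definition of well-closability, which perturbs in $B_\delta(\varphi^X_{[-T,0]}(x))$) is to take $z$ on the backward orbit of $x$, not $z=x$. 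Second, your resolution of the re-entry problem is too optimistic: you assert that one can choose the return time $\tau$ so that $\varphi^X_{(0,\tau)}(x)\cap\Delta=\varnothing$, but for a tube $\Delta$ of positive $\mu$-measure a Birkhoff-generic orbit must visit $\Delta$ with asymptotic frequency $\mu(\Delta)>0$, so this is generically impossible once $\tau$ is large. Ma\~n\'e's actual argument (and Wen's adaptation) does not try to avoid re-entry into one fixed tube; it works with a finite cover by boxes of controlled size and shows that for almost every point at least one box can be used to close the orbit with the required shadowing, by a Vitali/density-point selection. Third, your claimed bound ``$\sum_i\mu(\Delta_i)$ in terms of $\delta$ times the injectivity volume of the flow box'' does not make sense for an arbitrary invariant Borel measure $\mu$, which need bear no relation to Riemannian volume; the measure estimate Ma\~n\'e uses is of a different nature. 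In short, your proposal is a plausible outline in the spirit of \cite{Mane} and \cite{Wen1}, but the two technical pillars of the proof --- the precise perturbation geometry and the measure-theoretic selection of the closing box --- are both handled incorrectly as stated, and the paper itself delegates all of this to \cite{Wen1} rather than proving it.
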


\paragraph{Linear Poincar\'e flow and its extension.} We will introduce the linear Poincar\'e flow proposed firstly by Liao and the extended linear Poincar\'e flow firstly by Li-Gan-Wen \cite{LGW}. For any regular point $x\in M\setminus Sing(X)$, denote the normal space of $X$ at $x$ by
$$N_x=<X(x)>^\perp=\{v\in T_x M| v\bot X(x)\}.$$
Denote the {\it normal bundle} of $X$ by
$$N=N(X)=\bigcup_{x\in M\setminus
Sing(X)} N_x.$$
Let $\pi: T_{M\setminus{Sing(X)}}M\to N$ be the canonical projection.

The {\it linear Poincar\'e flow} $\psi_t=\psi_t^X:N\to N$ of $X$  is then defined to be the orthogonal projection of $\Phi_t|_N$ to $N$, i.e.,
$$\psi_t(v)=\pi\circ \Phi_t(v),~~~v\in N_x.$$

One can compactify the linear Poincar\'e flow to be the {\it extended linear Poincar\'e flow}.
Denote by $SM=\{e\in TM:\|e\|=1\}$ the unit sphere bundle of $M$ and $\rho:SM\to M$  the canonical bundle projection.
The tangent flow $\Phi_{t}$ thus induces a flow
$$\Phi_{t}^{\#}:SM \to SM$$
$$\Phi_{t}^{\#}(e)=\Phi_{t}(e)/\|\Phi_{t}(e)\|.$$
For each $e\in SM$, denote by
$$N_e=\{v\in T_{\rho(e)}M: v\perp e\}$$
 the normal space of $e$. Let
$$N_{SM}=\bigcup_{e\in SM}N_e.$$
%Then $N$ is a $d-1$ dimensional vector bundle over the base $SM$, irrelevant to vector fields.
%This bundle and the normal bundle $N=N(X)$ of a vector field $X$ over $M\setminus Sing(X)$ are both abbreviated as $N$, which should not cause a confusion from the context.
One can define the following ``linear Poincar\'e flow":
$$\tilde\psi_t: N_{SM}\to N_{SM}$$
by
$$\tilde\psi_t(v)=\pi_{\Phi^\#_t(e)}\circ\Phi_t(v).$$
where $\pi_{\Phi^\#_t(e)}$ is the orthogonal projection from $T_{\rho(\Phi^\#_t(e))}M$ to $N_{\Phi^\#_t(e)}$.

%Thus $\tilde\psi_t$ covers the flow $\Phi_{t}^{\#}$ of $SM$, that is, $$\iota\circ \tilde\psi_t=\Phi_{t}^{\#}\circ \iota,$$
%where $\iota: N_{SM}\to SM$ is the bundle projection.

When we take $e=X(x)/\|X(x)\|$ for some regular point $x$, then $N_e=N_x(X)$ and
 $$\tilde\psi_t|_{N_e}=\psi_t|_{N_x}.$$
 By this reason, Li-Gan-Wen \cite{LGW} have defined the compatification of $\psi_t$ on $N_{SM}$ to be the \emph{extended linear Poincar\'e flow}, which is also denoted by $\tilde\psi_t$.

 In other words, the extended linear Poincar\'e flow $\tilde\psi_t$ over the subset $$\{X(x)/\|X(x)\|: x\in M\setminus Sing(X)\}$$ of $SM$ can be identified with the usual linear Poincar\'e flow $\psi_t$ over  $M\setminus Sing(X)$.

Let $\Lambda\subset M$ be a compact invariant set of $X$. Denote by
$$\tilde\Lambda=\overline{\{X(x)/\|X(x)\|: x\in \Lambda\setminus Sing(X)\}}, $$ where the closure is taken in $SM$. The set $\tilde\Lambda$ is compact and $\Phi^{\#}_t$-invariant. Due to the parallel feature of vector fields near regular points, at every $x\in \Lambda\setminus Sing(X)$, $\tilde\Lambda$ gives a single unit vector $X(x)/\|X(x)\|$. Thus in a sense $\tilde\Lambda$ is a ``compactification" of $\Lambda\setminus Sing(X)$.

For a continuous function $h:~\Lambda\setminus{ Sing}(X)\to \RR$, for any sequence $\{x_n\}\subset\Lambda\setminus{Sing}(X)$ converging to $x$, if $X(x_n)/\|X(x_n)\|$ and $\{h(x_n)\}$ are also converging to some limits, then $h$ can be extended to be a continuous function $\tilde h:~\tilde\Lambda\to \RR$.

For measures, we have the following observations:
\begin{itemize}
\item for any measure $\mu$ supported on $\Lambda$, if $\mu(Sing(X))=0$, then one can lift $\mu$ to be $\tilde\mu$ supported on $\tilde\Lambda$ in a natural way. $\tilde\mu$ is said to be the \emph{transgression} of $\mu$.
\item for any invariant ergodic measure $\tilde\mu$ of $\Phi^\#$ supported on $\tilde\Lambda$, if $\tilde\mu(S_{Sing(X)}M)=0$, then $\tilde\mu$ can be projected to be an ergodic measure $\mu$ supported on $\Lambda$ in a unique way with the property that $\mu(Sing(X))=0$.
\end{itemize}
Furthermore, $\mu$ and $\tilde\mu$ have the same Lyapunov exponents.

\bigskip

Note that we have a following easy lemma for transitive sets.
\begin{Lemma}\label{Lem:nonemptyonmanifolds}
Let $\Lambda$ be a non-trivial transitive set of $X$ and $\sigma\in\Lambda$ be a hyperbolic singularity. Then $(W^s(\sigma)\setminus\{\sigma\})\cap \Lambda\neq\emptyset$ and $(W^u(\sigma)\setminus\{\sigma\})\cap \Lambda\neq\emptyset$.
\end{Lemma}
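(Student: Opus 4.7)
The plan is to locate points of $W^s(\sigma)\setminus\{\sigma\}$ and $W^u(\sigma)\setminus\{\sigma\}$ inside $\Lambda$ directly from a transitive orbit. Since $\Lambda$ is transitive and non-trivial, fix $x\in\Lambda$ with $\omega(x)=\Lambda$, some $y\in\Lambda\setminus\{\sigma\}$, and a sequence $t_n\to+\infty$ with $\varphi_{t_n}(x)\to\sigma$. I will choose an open neighborhood $V$ of $\sigma$ small enough that, by the local stable and unstable manifold theorems for the hyperbolic singularity $\sigma$, any point whose forward (resp.\ backward) orbit stays in $\overline V$ already lies in $W^s_{\mathrm{loc}}(\sigma)\subset W^s(\sigma)$ (resp.\ $W^u_{\mathrm{loc}}(\sigma)\subset W^u(\sigma)$); and I further shrink $V$ so that $y\notin\overline V$, which is possible by non-triviality of $\Lambda$.

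For $n$ large, I define the last exit time before $t_n$ and the first re-exit time after $t_n$:
\[
a_n=\sup\{t\le t_n:\varphi_t(x)\notin V\},\qquad b_n=\inf\{t\ge t_n:\varphi_t(x)\notin V\}.
\]
Because $y\in\omega(x)$ and $y\notin\overline V$, the forward orbit of $x$ visits $M\setminus\overline V$ infinitely often, hence $b_n<+\infty$; if $a_n=-\infty$ for arbitrarily large $n$, the whole forward orbit of $x$ would stay in $V$, contradicting the existence of the $b_n$. Thus for $n$ large one has $a_n<t_n<b_n$, with $\varphi_{a_n}(x),\varphi_{b_n}(x)\in\partial V$ and $\varphi_t(x)\in V$ on $(a_n,b_n)$.

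The key step is the estimates $t_n-a_n\to+\infty$ and $b_n-t_n\to+\infty$. If $t_n-a_n$ were bounded along a subsequence, compactness would yield a further subsequence with $\varphi_{a_n}(x)\to z\in\partial V$ and $t_n-a_n\to s\ge 0$; continuity of the flow then gives $\varphi_s(z)=\sigma$, forcing $z=\varphi_{-s}(\sigma)=\sigma$, contradicting $z\in\partial V$. The same argument at $b_n$ yields $b_n-t_n\to+\infty$. Extracting subsequences so that $\varphi_{a_n}(x)\to z_s$ and $\varphi_{b_n}(x)\to z_u$ in $\partial V\cap\Lambda$, for every fixed $t\ge 0$ one has $\varphi_{a_n+t}(x)\in V$ and $\varphi_{b_n-t}(x)\in V$ for $n$ large, so in the limit the forward orbit of $z_s$ and the backward orbit of $z_u$ lie in $\overline V$. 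By the choice of $V$, $z_s\in W^s(\sigma)$ and $z_u\in W^u(\sigma)$, and both are distinct from $\sigma$ since they lie on $\partial V$.

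The proof reduces to book-keeping of entrance and exit times of the orbit of $x$ across $\partial V$, combined with the standard local hyperbolic picture at $\sigma$; I do not anticipate a serious obstacle, as no global perturbation, closing lemma, or connecting lemma is required, only the persistence-in-$V$ characterization of the local stable and unstable manifolds.
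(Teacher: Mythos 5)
Your argument is correct and is essentially the argument the paper uses: the paper fixes a radius $r$ and takes $r$-balls where you take a general small neighborhood $V$, and it passes to the last exit point $\varphi_{-s_n}(x_n)$ of the transitive orbit before it approaches $\sigma$, whose limit lands in the persistence characterization of $W^s_{\mathrm{loc}}(\sigma)$. You run the entry- and exit-time bookkeeping simultaneously and so obtain both $z_s\in W^s(\sigma)$ and $z_u\in W^u(\sigma)$ at once, whereas the paper does the stable case and says ``similarly'' for the unstable one; this is a cosmetic difference, not a different route.
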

\begin{proof}
By the stable manifold theorem we know that there is $r\geq 0$ such that the local stable manifold of $\sigma$ can be characterized by
$$W^s_r(\sigma)=\{y\in M: d(\varphi^X_t(y),\sigma)\leq r, \forall t\geq 0\}.$$
Since $\Lambda$ is transitive, there is a point $x\in\Lambda$ such that $\Lambda=\omega(x)$. Now we can take a sequence of $x_n$ in the positive orbit of $x$ with $s_n>0$ such that $d(x_n,\sigma)\to 0$ and $d(\varphi_{-s_n}(x_n),\sigma)=r$ and $d(\varphi_{t}(x_n),\sigma)<r$ for all $t\in(-s_n,0)$. By the choice of the sequence $x_n$ and $s_n$, one can see that $s_n\to+\infty$. Let $y$ be an accumulated point of $\varphi_{-s_n}(x_n)$, then we know that $d(y,\sigma)=r$ and $d(\varphi_t(y), \sigma)\leq r$ for all $t\geq 0$. Hence $y\in (W^s(\sigma)\setminus \{\sigma\})\cap \Lambda$. Similarly, we can get that $W^u(\sigma)\setminus\{\sigma\}\cap \Lambda\neq\emptyset$.
\end{proof}

So we have the following lemma for $\tilde\Lambda$.

\begin{Lemma}\label{Lem:nonemptyonspace}
Let $\Lambda$ be a non-trivial transitive set of $X$ and $\sigma\in\Lambda$ be a hyperbolic singularity with hyperbolic splitting $T_\sigma M=E_\sigma^s\oplus E_\sigma^u$. Then we have $\tilde\Lambda\cap E_\sigma^s\neq\emptyset$ and $\tilde\Lambda\cap E_\sigma^u\neq\emptyset$.
\end{Lemma}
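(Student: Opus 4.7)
The plan is to directly construct points of $\tilde\Lambda$ inside the unit spheres of $E^s_\sigma$ and $E^u_\sigma$ by flowing along orbits that tend to $\sigma$ and taking limits of normalized velocity vectors.

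First I would apply Lemma~\ref{Lem:nonemptyonmanifolds} to pick a point $y\in(W^s(\sigma)\setminus\{\sigma\})\cap\Lambda$. Because $y$ lies on a non-trivial orbit in $W^s(\sigma)$, all iterates $\varphi_t^X(y)$ are regular (no orbit of a regular point can pass through the hyperbolic singularity $\sigma$ in finite time), and by invariance of $\Lambda$ they remain in $\Lambda\setminus\Sing(X)$. In particular, for every $t\in\RR$ the unit vector
\[
e_t = X(\varphi_t^X(y))/\|X(\varphi_t^X(y))\|
\]
belongs to $\tilde\Lambda$, by the very definition of $\tilde\Lambda$ as the closure in $SM$ of such unit vectors over $\Lambda\setminus\Sing(X)$.

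Next I would analyze the behavior of $e_t$ as $t\to+\infty$. Since $y\in W^s(\sigma)$, we have $\varphi_t^X(y)\to\sigma$. The stable manifold $W^s(\sigma)$ is a $C^1$ immersed submanifold invariant under the flow, so $X(\varphi_t^X(y))\in T_{\varphi_t^X(y)}W^s(\sigma)$ for every $t$. As $\varphi_t^X(y)\to\sigma$, the tangent space $T_{\varphi_t^X(y)}W^s(\sigma)$ converges in the Grassmannian to $T_\sigma W^s(\sigma)=E^s_\sigma$. By compactness of the unit sphere in $T_\sigma M$, the family $\{e_t\}_{t\ge 0}$ has at least one accumulation point $e^s\in SM$ as $t\to+\infty$. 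Because the unit tangent vectors to $W^s(\sigma)$ above a neighborhood of $\sigma$ accumulate only on unit vectors of $E^s_\sigma$, we conclude $e^s\in E^s_\sigma$ (with $\|e^s\|=1$). Since $\tilde\Lambda$ is closed in $SM$ and contains every $e_t$, we get $e^s\in\tilde\Lambda\cap E^s_\sigma$, proving the first assertion.

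For the second assertion I would argue symmetrically: Lemma~\ref{Lem:nonemptyonmanifolds} provides $z\in(W^u(\sigma)\setminus\{\sigma\})\cap\Lambda$; the orbit $\varphi_{-t}^X(z)$ lies in $\Lambda\setminus\Sing(X)$ and tends to $\sigma$ as $t\to+\infty$ along $W^u(\sigma)$; and $T_{\varphi_{-t}^X(z)}W^u(\sigma)\to E^u_\sigma$. Any accumulation direction of $X(\varphi_{-t}^X(z))/\|X(\varphi_{-t}^X(z))\|$ is then a unit vector in $E^u_\sigma$ belonging to $\tilde\Lambda$, finishing the proof. There is no real obstacle here; the only point requiring a little care is the Grassmannian convergence $T_{\varphi_t^X(y)}W^s(\sigma)\to E^s_\sigma$, which is standard from the $C^1$ smoothness of the stable manifold at the hyperbolic fixed point $\sigma$.
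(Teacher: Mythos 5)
Your proof is correct and follows the same route as the paper's: since $\Phi_t(X(x))=X(\varphi_t(x))$, the normalized velocity vectors $e_t$ you consider are exactly the paper's $\Phi_{t}^{\#}(X(x)/\|X(x)\|)$, and you take accumulation points as $t\to+\infty$. You simply make explicit the step the paper leaves terse, namely the Grassmannian convergence $T_{\varphi_t(y)}W^s(\sigma)\to E^s_\sigma$ from $C^1$ smoothness of the stable manifold, which is why the limit direction lands in $E^s_\sigma$.
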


\begin{proof}
By Lemma \ref{Lem:nonemptyonmanifolds} we know that there is $x\in\Lambda\cap
W^s(\sigma)\backslash\{\sigma\}$, and then
$\frac{X(x)}{|X(x)|}\in\tilde{\Lambda}$. Take a sequence
$t_n\to+\infty$ such that
$\Phi_{t_n}^\#(X(x)/\|X(x)\|)$ accumulates to some unit
vector $e$, hence $e\in\tilde{\Lambda}\cap
E_\sigma^s$. Similarly we have $\tilde\Lambda\cap E_\sigma^u\neq\emptyset$.
\end{proof}

\bigskip

If $\Lambda$ is a robustly transitive set of $X$ with a neighborhood  $U$ as in definition, then the following extension $B(\Lambda)$ of $\Lambda$ in $SM$ will be more suitable. 

These compactifications started from Li-Gan-Wen \cite{LGW}. For recent results, one can see \cite{BdL17}. There are also compactifications for non-linear dynamics in \cite{GaY18} and \cite{CrY17}.

Here
$$B(\Lambda)=\{e\in SM: \rho(e)\in\Lambda, \exists X_n\to X, \text{ and } p_n\in Per(X_n) $$
$$\ \ \ \ \ \ \ \ \ \ \ \ \ \ \ \ \  Orb_{X_n}(p_n)\subset U, ~\text{s.t.~}
\frac{X_n(p_n)}{\|X_n(p_n)\|}\to e \}.$$
The set $B(\Lambda)$ is also a compact invariant set of $\Phi_t^{\#}$. 
\begin{Lemma}\label{Lem:wildecontainedinB}
If $\Lambda$ is robustly transitive, then $\tilde\Lambda\subset B(\Lambda)$.
\end{Lemma}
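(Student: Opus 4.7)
The plan is to reduce the inclusion $\tilde\Lambda\subset B(\Lambda)$ to two observations: (i) every unit vector of the form $X(x)/\|X(x)\|$ with $x\in\Lambda\setminus\Sing(X)$ already lies in $B(\Lambda)$, and (ii) $B(\Lambda)$ is a closed subset of $SM$. Since $\tilde\Lambda$ is by definition the closure in $SM$ of these unit vectors, (i) and (ii) together give the conclusion.

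For (i), fix $x\in\Lambda\setminus\Sing(X)$. Robust transitivity says that $\Lambda=\Lambda_X$ is a non-trivial transitive set, so Lemma~\ref{Lem:transitive-periodic-likt} provides vector fields $Y_n\to X$ in the $C^1$ topology and periodic orbits $P_n$ of $Y_n$ with $P_n\to\Lambda$ in the Hausdorff metric. Because $U$ is an open neighborhood of the compact set $\Lambda$, there is $\varepsilon>0$ with $B(\Lambda,\varepsilon)\subset U$, and Hausdorff convergence forces $P_n\subset B(\Lambda,\varepsilon)\subset U$ for all large $n$; in particular the full orbit $\orb_{Y_n}(p_n)=P_n$ lies in $U$ for any choice of $p_n\in P_n$. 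The same convergence lets me pick $p_n\in P_n$ with $p_n\to x$. Since $X$ is continuous and $X(x)\neq 0$, the $C^0$-convergence $Y_n\to X$ yields $Y_n(p_n)\to X(x)$ and hence $Y_n(p_n)/\|Y_n(p_n)\|\to X(x)/\|X(x)\|$, which exhibits $X(x)/\|X(x)\|$ as an element of $B(\Lambda)$.

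For (ii), suppose $e_k\in B(\Lambda)$ and $e_k\to e$ in $SM$. For each $k$ fix witnessing sequences $X_{k,n}\to X$ in $C^1$ and periodic points $p_{k,n}\in\Per(X_{k,n})$ with $\orb_{X_{k,n}}(p_{k,n})\subset U$ and $X_{k,n}(p_{k,n})/\|X_{k,n}(p_{k,n})\|\to e_k$ as $n\to\infty$. A standard diagonal extraction produces indices $n(k)$ with $X_{k,n(k)}\to X$ in $C^1$ and $X_{k,n(k)}(p_{k,n(k)})/\|X_{k,n(k)}(p_{k,n(k)})\|\to e$, while $\orb_{X_{k,n(k)}}(p_{k,n(k)})\subset U$ for every $k$; the basepoint $\rho(e)$ lies in $\Lambda$ because $\rho(e_k)\in\Lambda$, $\rho(e_k)\to\rho(e)$, and $\Lambda$ is closed. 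Hence $e\in B(\Lambda)$.

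Robust transitivity enters only in step (i), where it supplies both the hypothesis needed to apply Lemma~\ref{Lem:transitive-periodic-likt} to $\Lambda=\Lambda_X$ and the fact that the approximating periodic orbits $P_n$ are forced into the isolating neighborhood $U$. I do not expect a serious obstacle; the only mild technical point is that the definition of $B(\Lambda)$ requires the entire orbit $\orb_{Y_n}(p_n)$ (rather than merely the point $p_n$) to sit in $U$, and this is exactly what the Hausdorff convergence $P_n\to\Lambda\subset U$ delivers.
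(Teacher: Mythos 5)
Your proof is correct and takes essentially the same approach as the paper's: both reduce the claim to showing that each unit vector $X(x)/\|X(x)\|$ with $x\in\Lambda\setminus\Sing(X)$ lies in $B(\Lambda)$ via Lemma~\ref{Lem:transitive-periodic-likt}, and then pass to the closure using the fact that $B(\Lambda)$ is closed (the paper invokes compactness, which it asserts without proof). You are somewhat more careful than the paper in two spots — you justify closedness of $B(\Lambda)$ by a diagonal argument and you explicitly verify that the approximating periodic orbits sit inside $U$, as the definition of $B(\Lambda)$ requires — but these are filled-in details rather than a different route.
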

\begin{proof}
Fix $e\in\tilde\Lambda$. Given any $\varepsilon>0$, there is a regular point $x\in\Lambda$ such that $X(x)/\|X(x)\|$ is $\varepsilon/2$-close to $e$. By Lemma~\ref{Lem:transitive-periodic-likt}, $\Lambda$ is a periodic limit. So there is a sequence of vector fields $\{X_n\}$ such that each $X_n$ has a periodic point $p_n$ with the property $\lim_{n\to\infty}p_n=x$. By the fact that $x$ is a regular point of $X$, one has that
$$\lim_{n\to\infty}\frac{X(p_n)}{\|X(p_n)\|}=\frac{X(x)}{\|X(x)\|}.$$
This implies that $X(x)/\|X(x)\|$ is contained in $B(\Lambda)$. So $e$ is $\varepsilon/2$-close to $B(\Lambda)$. By the fact that $B(\Lambda)$ is compact, one has that $e\in B(\Lambda)$.
\end{proof}

%Since $\Lambda$ is transitive, by Lemma \ref{Lem:transitive-periodic-likt} we can see that $\tilde{\Lambda}\subset B(\Lambda)$.

\paragraph{Dominated splittings.}Similar to the dominated splitting with respect to tangent flow on a $\varphi_t$ invariant set, we can define the dominated splitting with respect to the linear Poincar\'e flow. Let $\Lambda$ be a compact invariant set of $\varphi_t$. We say that a $\psi_t$ invariant splitting $N_{\Lambda\setminus Sing(X)}=N_1\oplus N_2$ is an $l$-{\it dominated splitting} (with respect to $\psi_t$) or bundle $N_1$ is $l$-dominated by bundle $N_2$ if
$$\|\psi_t|_{N_1(x)}\|\cdot \|\psi_{-t}|_{N_2(\varphi_t(x))}\|\leq 1/2$$
for all $x\in\Lambda$ and all $t\geq l$. If  there is an positive integer $i$ such that $\dim N_1(x)=i$ for all $x\in\Lambda\setminus Sing(X)$, then we say the dominated splitting is {\it homogenous} and $i$ is the {\it index} of the dominated splitting. The dominated splitting for the extended linear Poincar\'e flow $\tilde{\psi}_t$ on a $\Phi_t^{\#}$ invariant set can be defined similarly.

We remark here that an equivalent definition for dominated splitting can be given as following. Usually, a $\psi_t$-invariant bundle $N_1$ is dominated by a $\psi_t$-invariant bundle $N_2$ on an invariant set $\Lambda$ means that there exist constants $C\geq 1, \lambda>0$ such that
$$\|\psi_t|_{N_1(x)}\|\cdot \|\psi_{-t}|_{N_2(\varphi_t(x))}\|\leq C{\rm e}^{-\lambda t}$$
for any $x\in\Lambda\setminus Sing(X)$ and $t\geq 0$. We say an invariant splitting $N_1\oplus N_2\oplus\cdots\oplus N_k$ is dominated means that $N_{i}$ is dominated by $N_{i+1}$ for every $i=1,2,\cdots,k-1$. It is known that $N_1\oplus N_2\oplus\cdots\oplus N_k$ is a dominated splitting is equivalent to $(N_1\oplus\cdots N_i)\oplus (N_{i+1}\oplus\cdots\oplus N_k)$ is a dominated splitting for all $i=1,2,\cdots,k-1$.

For the relationship between the dominated splittings for the linear Poincar\'e flow and the extended linear Poincar\'e flow, we have the following proposition.

\begin{Proposition}[\cite{LGW}]\label{Pro:extendedLPF}
If $\Lambda$ admits a dominated splitting
$N_{\Lambda\setminus Sing(X)}=N_1\oplus N_2$ with respect to the
linear Poincar\'e flow, then one has a dominated splitting $N_{\tilde{\Lambda}}=N_1\oplus
N_2$ with respect to the extended linear Poincar\'e
flow $\tilde{\psi}_t^X$ such that $N_1(e)=N_1(\rho(e))$ and $N_2(e)=N_2(\rho(e))$ for all $e\in\tilde\Lambda$ with $\rho(e)\in\Lambda\setminus Sing(X)$.
\end{Proposition}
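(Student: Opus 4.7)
The plan is to define the extended splitting by taking Grassmannian limits, and to verify that $\tilde\psi_t$-invariance and $\ell$-domination, being closed conditions, propagate from the regular part of $\tilde\Lambda$ to all of $\tilde\Lambda$. First, for every $e\in\tilde\Lambda$ with $x:=\rho(e)\notin\Sing(X)$, I set $N_i(e):=N_i(x)$; this is legitimate because such an $e$ equals $\pm X(x)/\|X(x)\|$, so $N_e=\langle X(x)\rangle^\perp=N_x$, and on this ``regular part'' $\tilde\psi_t$ and $\psi_t$ coincide. Invariance and the $\ell$-domination of the splitting on the regular part are then inherited directly from the hypothesis.

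For $e\in\tilde\Lambda$ with $\rho(e)\in\Sing(X)$, I would pick any sequence $e_n\in\tilde\Lambda$ with $\rho(e_n)$ regular and $e_n\to e$ in $SM$. The subspaces $N_1(e_n)\subset N_{e_n}$ have constant dimension $i$, so passing to a subsequence they converge in the Grassmannian of $N_e$ to an $i$-plane $V_1(e)$, and similarly one obtains $V_2(e)$. Running the same construction at $\tilde\psi_t(e)$ for each $t$ and using continuity of $\tilde\psi_t$, the inequality
\[
\|\tilde\psi_t|_{N_1(e_n)}\|\cdot\|\tilde\psi_{-t}|_{N_2(\tilde\psi_t(e_n))}\|\le\tfrac12,\qquad t\ge\ell,
\]
passes to the limit, and $V_1\oplus V_2$ is a $\tilde\psi_t$-invariant $\ell$-dominated splitting along the $\tilde\psi_t$-orbit of $e$.

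The key step, which I expect to be the main obstacle, is to show $V_1(e), V_2(e)$ are independent of the chosen subsequence. For this I would invoke the classical uniqueness of dominated splittings of prescribed dimensions on a single orbit: if two candidate splittings $V_1\oplus V_2$ and $V_1'\oplus V_2'$ of the same dimensions are produced at $e$ and extended by invariance along the $\tilde\psi_t$-orbit, take $v\in V_1'$, decompose $v=v_1+v_2$ in $V_1\oplus V_2$, and apply $\tilde\psi_{-t}$ together with the two domination inequalities; the $V_2$-component is forced to grow strictly faster than $v$ itself under backward iteration, forcing $v_2=0$. Hence $V_1'\subset V_1$, and by equality of dimensions $V_1'=V_1$ and $V_2'=V_2$.

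With this uniqueness in hand, setting $N_i(e):=V_i(e)$ yields a well-defined splitting on all of $\tilde\Lambda$. Continuity of $e\mapsto N_i(e)$ is automatic, since any Grassmannian limit along a sequence $e_n\to e$ is forced to coincide with $N_i(e)$ by the uniqueness argument applied to $e$. The identity $N_i(e)=N_i(\rho(e))$ for $\rho(e)\notin\Sing(X)$ holds by construction; $\tilde\psi_t$-invariance of the extended bundles follows from invariance on the regular part and continuity of $\tilde\psi_t$ combined with the separate invariance of the regular and singular parts of $\tilde\Lambda$ (since a singularity is $\varphi_t$-fixed); and the $\ell$-domination inequality on all of $\tilde\Lambda$ follows because the set on which it holds is closed and contains the dense regular part.
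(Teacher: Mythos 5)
The paper itself cites \cite{LGW} for this proposition and gives no proof of its own, so there is no internal argument to compare against; your Grassmannian-limit construction is the standard and correct route, and it matches the technique the paper does use in closely analogous places (e.g.\ the limit construction in the proof of Lemma~3.2, where dominated splittings on periodic orbits are passed to the limit set and pinned down by uniqueness). A few imprecisions are worth flagging. Over a regular $x\in\Lambda$, the fiber of $\tilde\Lambda$ is the single point $X(x)/\|X(x)\|$, not $\pm X(x)/\|X(x)\|$ (harmless, since $N_e=N_{-e}$). In your displayed inequality the base point should read $\Phi_t^\#(e_n)$, not $\tilde\psi_t(e_n)$: $\tilde\psi_t$ acts on the normal bundle, while the base dynamics on $SM$ is $\Phi_t^\#$. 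More substantively, you should note that $V_1(e)$ and $V_2(e)$ obtained as subsequential limits actually span $N_e$; this needs the uniform lower bound on the angle between $N_1$ and $N_2$, which follows from the $\ell$-domination together with the global bound $\sup_M\|\Phi_\ell\|<\infty$. Finally, your uniqueness argument implicitly compares the growth rates of two candidate splittings; that comparison is only legitimate when the norms and angles involved are uniformly controlled, which is the case on a compact invariant set but not automatically on a single orbit. You should therefore carry it out on the $\Phi_t^\#$-orbit closure of $e$ inside $\tilde\Lambda$ (compact, as a closed subset of the compact $\tilde\Lambda$), where the classical uniqueness of dominated splittings of a given index applies directly. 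With those adjustments the argument is complete and is the expected proof.
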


For more discussion on the extended linear Poincar\'e flow and the dominated splitting, one can see section 2 and section 3 of \cite{LGW}.

\paragraph{Estimation on the periodic orbits.} Here we collect some known results from \cite{Wen2, Wen3} for the systems far away from homoclinic tangencies. Let $\overline{HT}\subset \mathcal{X}^1(M)$ be the closure of the set of vector fields which has a homoclinic  tangency. Let $P$ be a hyperbolic periodic orbit of a $C^1$ vector field $Y$ with period $\pi(P)$. Then for any point $p\in P$, the normal space $N_p$ of $Y$ at $p$ can be split into $N_p^s\oplus N_p^u$ where $N_p^s$ is the sum of eigenspaces related to the eigenvalues of $\psi^Y_{\pi(P)}|_{N_p}$ with modulus less than $1$, $N_p^u$ is the sum of eigenspaces related to the eigenvalues of $\psi^Y_{\pi(P)}|_{N_p}$ with modulus greater than $1$.

\begin{Proposition}[\cite{Wen2}]\label{Pro:dominatedsplitting}
Let $X\in \mathcal{X}^1(M)\setminus\overline{HT}$, then there exist
a $C^1$ neighborhood $\mathcal{U}$ of $X$ and constant
$l>0$ such that for any $Y\in\mathcal{U}$ and any hyperbolic
periodic orbit $P$ of $Y$, the splitting
$N_{P}=N^s(P)\oplus N^u(P)$ is an $l$-dominated
splitting with respect to the linear Poincar\'e flow $\psi_t^{Y}$.
\end{Proposition}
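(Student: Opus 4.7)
}

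The plan is to argue by contradiction: assume there is no uniform $l$ with the required domination in any $C^1$-neighborhood of $X$. Then for every $n\ge 1$ I can find a vector field $Y_n$ with $Y_n\to X$ in the $C^1$ topology, a hyperbolic periodic orbit $P_n$ of $Y_n$ with period $\pi(P_n)$, and a point $p_n\in P_n$ and a time $t_n\ge n$ such that
$$\|\psi_{t_n}^{Y_n}|_{N^s(p_n)}\|\cdot \|\psi_{-t_n}^{Y_n}|_{N^u(\varphi_{t_n}^{Y_n}(p_n))}\|> \tfrac12.$$
By passing to a subsequence and using a cocycle decomposition along $P_n$, one arranges that the failure of domination is seen on a single period: there exist vectors in $N^s(q_n)$ (for some $q_n\in P_n$) whose expansion under $\psi_{\pi(P_n)}^{Y_n}$ is almost comparable to some contraction inside $N^u(q_n)$. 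This is the standard ``lack of domination on periodic orbits'' setting, and the goal is to convert it into a homoclinic tangency of an orbit close to $P_n$ for a small $C^1$ perturbation, contradicting $X\notin \overline{HT}$.

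The next step is a Franks-type perturbation along the periodic orbit $P_n$ for the \emph{linear Poincar\'e} cocycle. Because domination between $N^s$ and $N^u$ fails while both subbundles are hyperbolic (one contracted, the other expanded, over time $\pi(P_n)$), one can perform arbitrarily small perturbations of $\psi_{\pi(P_n)}^{Y_n}$ that mix a weak stable direction with a weak unstable direction to produce, either a pair of complex eigenvalues of modulus $1$, or a real eigenvalue equal to $1$, or at the very least a new hyperbolic periodic orbit $\tilde P_n$ (of a new vector field $\tilde Y_n$ arbitrarily $C^1$-close to $Y_n$) whose new stable and unstable normal spaces have an arbitrarily small angle at some point $\tilde q_n$. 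The passage from a perturbation of the linear Poincar\'e cocycle back to a genuine $C^1$ perturbation of the vector field is done by the flow version of Franks' lemma, which constructs $\tilde Y_n$ supported in a tubular neighbourhood of $P_n$ and realising the prescribed derivative cocycle along the (possibly slightly modified) periodic orbit.

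Once $\tilde P_n$ has stable and unstable normal directions at $\tilde q_n$ with small angle, I make a further small $C^1$ perturbation: using a classical perturbation on the transverse section at $\tilde q_n$ (a local push in $W^u(\tilde P_n)$ towards $W^s(\tilde P_n)$ along the normal direction that realises the small angle), I create a non-transverse intersection point of $W^s(\tilde P_n)$ and $W^u(\tilde P_n)$, i.e. a homoclinic tangency associated with $\tilde P_n$. The resulting vector field is $C^1$-close to $Y_n$, hence $C^1$-close to $X$. Letting $n\to\infty$ produces a sequence of vector fields in $HT$ converging to $X$, contradicting $X\notin \overline{HT}$.

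The core obstacle, and where I would spend the most care, is the Franks-type step in the middle paragraph: one must ensure that the perturbation of the Poincar\'e cocycle (a) stays small in the $C^1$ topology after extension to a genuine vector field perturbation via a tubular neighbourhood of $P_n$, (b) keeps the periodic orbit hyperbolic while driving the angle between the new $N^s$ and $N^u$ to zero, and (c) does not destroy the property that the periodic orbit passes through a region where the normal transverse section is large enough to support the subsequent tangency-creating perturbation. This is precisely the content of Wen's argument in \cite{Wen2} (developed in his earlier papers on the linear Poincar\'e flow), and the proof would closely mirror the analogous diffeomorphism statement of Bonatti--Diaz--Pujals \cite{BDP} in spirit, but carried out on the normal bundle along orbits rather than on the tangent bundle.
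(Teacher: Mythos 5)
The paper does not prove this proposition itself; it is quoted from \cite{Wen2} (stated there for diffeomorphisms) together with the remark that the flow version follows via Franks' lemma for flows \cite[Theorem A.1]{BGV}. Your proposal follows exactly the strategy of Wen's proof: argue by contradiction, apply a Franks-type perturbation of the linear Poincar\'e cocycle along a periodic orbit that fails $l$-domination for large $l$ in order to drive the angle between $N^s$ and $N^u$ to zero (or to create a non-hyperbolic orbit), then perform a further local perturbation to produce a homoclinic tangency, contradicting $X\notin\overline{HT}$. So the approach is correct and is essentially the cited source's argument; the ``core obstacle'' you flag --- passing from failure of domination over a long time window to a $C^1$-small, Franks-realizable cocycle perturbation that gives a small angle --- is precisely where the Pliss/Liao-type selection of times and the direction-mixing estimates in \cite{Wen2} (in the spirit of \cite{Mane,BDP}) do the work, with the flow-specific ingredient being the tubular-neighbourhood realization of Poincar\'e-cocycle perturbations from \cite{BGV}.
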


Actually, in \cite{Wen2} it is proved that once $X$ is far away from homoclinic tangencies, then there exist a neighborhood $\mathcal{U}\subset\mathcal{X}^1(M)$ and a constant $a>0$ such that for any $Y\in\mathcal{U}$ and any periodic point $p$ of $Y$ with period $\pi(p)$, there exists at most one eigenvalue of $\psi_{\pi(p)}^Y|_{N_p}$ with modulus in $(e^{-a\pi(p)}, e^{a\pi(p)})$. Then for any periodic orbit $P$ of $Y\in\mathcal{U}$, $N_P$ can be split into $N^{ss}\oplus N^c\oplus N^{uu}$ where three bundles are the sum of eigenspaces associated to eigenvalues with modulus in $(0,e^{-a\pi(P)}]$, $(e^{-a\pi(P)}, e^{a\pi(P)})$ and $[e^{a\pi(P)}, +\infty)$ respectively. The following proposition is proved in \cite{Wen2, Wen3}. Wen stated his theorems for diffeomorphisms in \cite{Wen2,Wen3}. Under the help of the Franks' Lemma for flows \cite[Theorem A.1]{BGV}, one can adapt Wen's result for flows.

\begin{Proposition}[\cite{Wen2, Wen3}]\label{Pro:dominatedsplitting2}
Let $X\in \mathcal{X}^1(M)\setminus\overline{HT}$, then there exist
a $C^1$ neighborhood $\mathcal{U}$ of $X$ and constants $C>1$,
$l>0$ and $\eta>0$ such that for any $Y\in\mathcal{U}$ and any hyperbolic
periodic orbit $P$ of $Y$, the following are satisfied:
\begin{enumerate}
\item $N^c$ has at most dimension one,
\item the splitting $N^{ss}\oplus N^c\oplus N^{uu}$ are $l$-dominated,
\item for any point $p\in P$, we have
$$\prod_{i=0}^{[\pi(P)/l-1]}\|\psi^Y_l|_{N^{ss}(\varphi^Y_{il}(p))}\|<C e^{-\eta\pi(P)},$$
$$\prod_{i=0}^{[\pi(P)/l-1]}\|\psi^Y_{-l}|_{N^{uu}(\varphi^Y_{-il}(p))}\|<C e^{-\eta\pi(P)}.$$

\end{enumerate}
\end{Proposition}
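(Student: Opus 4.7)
The strategy is to transport Wen's original arguments for diffeomorphisms \cite{Wen2, Wen3} into the flow setting; the bridge is Franks' Lemma for flows \cite[Theorem A.1]{BGV}, which realizes any perturbation of the linear Poincar\'e return map along a hyperbolic periodic orbit as a genuine $C^1$ perturbation of the vector field, supported in an arbitrarily thin tubular neighborhood of the orbit.

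First I would set up the ``spectral dichotomy'' already mentioned in the remark preceding the statement: since $X\notin\overline{HT}$, there exist a $C^1$ neighborhood $\mathcal{U}$ of $X$ and a constant $a>0$ such that for every $Y\in\mathcal{U}$ and every hyperbolic periodic point $p$ of period $\pi(p)$, the Poincar\'e return operator $\psi_{\pi(p)}^Y|_{N_p}$ has at most one eigenvalue of modulus in $(e^{-a\pi(p)},e^{a\pi(p)})$. The argument is by contrapositive: a pair of eigenvalues lying in the central window can be brought by a Franks-type perturbation to the same modulus, and then rotated relative to the strong stable and unstable subspaces so as to create a homoclinic tangency associated to the continuation of $P$, contradicting $X\notin\overline{HT}$. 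Item~(1) is then immediate, since $N^c$ is by definition the span of the at-most-one central eigenspace.

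Second I would derive items~(2) and~(3). By construction the eigenvalues of $\psi_{\pi(P)}^Y|_{N^{ss}}$ have modulus at most $e^{-a\pi(P)}$ and those of $\psi_{\pi(P)}^Y|_{N^{uu}}$ at least $e^{a\pi(P)}$, so the spectral gaps between $N^{ss}$ and $N^c$, and between $N^c$ and $N^{uu}$, are exponentially large in $\pi(P)$, uniformly in $Y\in\mathcal{U}$. Choosing $l$ large enough relative to the global upper bound on $\|\psi_t^Y\|$ over $\mathcal{U}$ (so that basis-dependent distortions are dominated by the spectral gap), the inequalities $\|\psi_l^Y|_{N^{ss}}\|\cdot\|\psi_{-l}^Y|_{N^c}\|\le 1/2$ and $\|\psi_l^Y|_{N^c}\|\cdot\|\psi_{-l}^Y|_{N^{uu}}\|\le 1/2$ hold along every periodic orbit, which is item~(2). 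For item~(3), the same quantitative reasoning together with a universal basis-change constant $C$ bounds the product $\prod_{i=0}^{[\pi(P)/l]-1}\|\psi_l^Y|_{N^{ss}(\varphi^Y_{il}(p))}\|$ by $Ce^{-\eta\pi(P)}$ for any $\eta<a$; the estimate on $N^{uu}$ is symmetric. The uniformity of $C$, $l$, $\eta$ across $\mathcal{U}$ is again secured by Wen's perturbation scheme: a counterexample sequence $Y_n\to X$ with periodic orbits violating these bounds could be further perturbed into a vector field arbitrarily $C^1$-close to $X$ exhibiting a homoclinic tangency.

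The main obstacle is the first step, and specifically ensuring that the Franks-style perturbations of $\psi_{\pi(P)}^Y$ required to create a tangency can always be realized as $C^1$-small vector field perturbations supported only near $P$. This is precisely the content of \cite[Theorem A.1]{BGV}; once it is granted, Wen's dichotomy and the induced structure on $N_P$ transfer from the diffeomorphism setting to the flow setting with only notational changes.
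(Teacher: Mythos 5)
Your high-level strategy matches what the paper itself does: the paper offers no written proof, but cites \cite{Wen2,Wen3} for the diffeomorphism result and \cite[Theorem A.1]{BGV} for the transfer to flows, which are exactly the two ingredients you invoke. Your account of item~(1), the spectral dichotomy, is also correct, and the sketch (collapse two central eigenvalues to equal modulus, then rotate to produce a tangency via Franks' Lemma) is the right idea.

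The gap is in your derivation of items~(2) and~(3). You argue that once the return map $\psi_{\pi(P)}^Y|_{N_p}$ has a spectral gap $e^{\pm a\pi(P)}$, the $l$-dominated splitting and the Pliss-type product bounds follow by choosing $l$ large relative to a global bound on $\|\psi_t^Y\|$, with a ``universal basis-change constant $C$'' absorbing the ``basis-dependent distortions.'' No such universal constant exists. The spectral dichotomy constrains only the eigenvalues of the return map over a full period; it says nothing about the norms $\|\psi_l^Y|_{N^{ss}(x)}\|$ at intermediate points $x\in P$. Along a long periodic orbit the angles between the invariant subbundles $N^{ss}$, $N^c$, $N^{uu}$ can degenerate, so the return map can have arbitrarily large condition number while keeping the same eigenvalues, and then $\|\psi_l^Y|_{N^{ss}(x)}\|$ can be large at some intermediate $x$ and the $l$-domination inequality can fail there. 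This degeneration is precisely what makes the result nontrivial. In \cite{Wen2,Wen3} the uniform $l$-domination and the product estimates in item~(3) are established by a contradiction-and-perturbation argument of Ma\~n\'e/Pliss type: if the bounds failed along a sequence of periodic orbits, one perturbs (via Franks' Lemma) to produce small angles and mixed behavior and ultimately a homoclinic tangency near $X$, contradicting $X\notin\overline{HT}$. You do mention this scheme, but you mislocate its role as being needed ``only for uniformity across $\mathcal{U}$''; in fact it is the core of the proof of items~(2) and~(3) even for a single $Y$ near $X$, and the spectral-gap-plus-norm-bound shortcut does not replace it.
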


%\begin{Lemma}\label{Lem:limit-dominated}
%If $\Lambda$ is an $i$-periodic limit of a vector field $X$ that is away from ones with a homoclinic tangency, then $N_{\Lambda\setminus{Sing(X)}}$ admits a dominated splitting of index $i$ with respect to the linear Poincar\'e %flow.

%\end{Lemma}
%\begin{proof}
%Apply Proposition~\ref{Pro:dominatedsplitting},

%\end{proof}

\paragraph{Generic results.} For any two hyperbolic periodic orbits $P_1$ and $P_2$ of a $C^1$ vector field $X$, for any open set $U$, we say $P_1$ is {\it homoclinic related} to $P_2$ in $U$ if the stable manifold of $P_1$ and the unstable manifold of $P_2$ have a transverse intersection point whose orbit is in $U$, and the stable manifold of $P_2$ and the unstable manifold of $P_1$ have a transverse intersection point whose orbit is in $U$. Let $P$ and a neighborhood of $P$ be fixed, we call
$$H(P,U)=\overline{\{x\in P':P'\subset U, P' \text{ is homoclinic related to } P~\text{in}~U\}}$$
the {\it relative homolinic class }of $P$ in $U$. As usual, a $C^1$ vector field $X$ is called a Kupka-Smale system if every critical point of $X$ is hyperbolic, and for any two critical orbit $O_1$ and $O_2$ of $X$, the stable manifold of $O_1$ and the unstable manifold of $O_2$ intersect transversely. Given $\delta>0$, a sequence $\{(x_i,t_i):x_i\in M, t_i\geq 1\}$ is called a $\delta$-{\it pseudo orbit} if $d(\varphi_{t_i}(x_i),x_{i+1})<\delta$ for any $i$. We say that a compact invariant set $\Lambda$ is {\it chain transitive }if for any $x,y\in\Lambda$ and any $\delta>0$, there is a $\delta$-pseudo orbit $\{(x_i,t_i)\}_{i=1}^n(n>1)$ with all $x_i\in\Lambda$ such that $x_1=x$ and $x_n=y$.

Here we collect some generic properties of $C^1$ vector fields. Recall a subset $\mathcal{R}\subset \mathcal{X}^1(M)$ is called  {\it residual} if it contains an intersection of countably many open and dense subsets of $\mathcal{X}^1(M)$ and a property is called a \emph{generic} property if it holds in a residual set. Recall the definition of index in Section~\ref{Sec:introduction}.
\begin{Proposition}\label{Pro:generic}
There is a residual set $\mathcal{R}\subset \mathcal{X}^1(M)$ such
that for any $X\in\mathcal{R}$, the following properties are
satisfied:
\begin{enumerate}
\item $X$ is Kupka-Smale.
\item For any isolated transitive set $\Lambda$ with isolated neighborhood $U$, if $\Lambda$
contains a periodic orbit $P$, then $\Lambda$ equals to the
relative homoclinic class $H(P, U)$.
\item Let $\Lambda$ be a compact invariant set of $X$. If there is a sequence $X_n\to X$ and periodic orbits
$P_n$ of $X_n$ of index $i$ such that
$\lim_{n\to\infty}P_n=\Lambda$ in the Hausdorff topology, then
there exists a sequence of periodic orbits $Q_n^X$ of index $i$ of
$X$ itself such that $\lim_{n\to\infty}Q_n^X=\Lambda$ in the Hausdorff
topology.
\item For two open sets $U,V$ satisfying ${\overline U}\subset V$, if there are two hyperbolic periodic orbits $P_1,P_2\subset
U$ with ${\rm Ind}(P_1)<{\rm Ind}(P_2)$ such that for the
relative homoclinic classes, one has $H(P_1,U)=H(P_2,U)$,
then for any $i\in[{\rm Ind}(P_1),{\rm Ind}(P_2)]$, there
is a hyperbolic periodic orbit $P$ of index $i$ such that
$P_1, P_2\subset H(P,V)$.
\item Every chain transitive set of $X$ is a periodic limit.

\item An isolated chain transitive set $\Lambda$ of $X$ is robustly chain transitive, i.e., there exist a neighborhood $\mathcal{U}\subset\mathcal{X}^1(M)$ of $X$ and a neighborhood $U\subset M$ of $\Lambda$ such that for every $Y\in\mathcal{U}$, $$\Lambda_Y=\bigcap_{t\in \mathbb{R}}\varphi^Y_{t}(U)$$ is chain transitive.
\end{enumerate}

\end{Proposition}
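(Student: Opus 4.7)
The plan is to construct six residual sets $\mathcal{R}_1,\dots,\mathcal{R}_6$, one for each item, and set $\mathcal{R}=\bigcap_{i=1}^{6}\mathcal{R}_i$; since a countable intersection of residual sets is residual, $\mathcal{R}$ will be residual. Each $\mathcal{R}_i$ will be written as a countable intersection of open and dense subsets of $\mathcal{X}^1(M)$ indexed by elements of a countable basis of open sets of $M$ together with a precision parameter $1/k\in\mathbb{Q}_{+}$, and density in each case will be obtained via a $C^1$ perturbation using the connecting lemma (Lemma~\ref{Lem:connectinglemma}) and/or the ergodic closing lemma (Proposition~\ref{Pro:ergodicclosinglemma}).

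Items (1), (3), (5), (6) follow from standard generic arguments. Item (1) is the Kupka--Smale theorem for flows. Item (5) is the flow analogue of the Bonatti--Crovisier theorem: given a chain-transitive set $\Lambda$ of $X$, one closes a sufficiently long $\delta$-pseudo-orbit in $\Lambda$ into a genuine periodic orbit of a nearby $Y$ by iterating Lemma~\ref{Lem:connectinglemma} along each jump, producing $P_Y$ close to $\Lambda$ in Hausdorff distance. Item (6) combines (5) with the upper semicontinuity of the maximal invariant set $\Lambda_Y=\bigcap_t\varphi_t^Y(U)$ and a Conley-theoretic observation that any chain-recurrence decomposition is detected by a small $C^1$ perturbation. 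Item (3) is the lower-semicontinuity argument for the map $X\mapsto \overline{\{p\in \operatorname{Per}(X):\operatorname{Ind}(p)=i\}}$ in the Hausdorff topology on compact subsets of $M$: its continuity points form a residual set, and at such a point any Hausdorff limit of index-$i$ periodic orbits of nearby vector fields is itself such a limit for $X$.

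Item (2) is obtained by showing, for each element $U$ of a countable basis and each $k\in\mathbb{N}$, that the set of $X$ for which every isolated transitive set with isolating neighborhood $U$ containing a periodic orbit $P$ equals $H(P,U)$ up to $1/k$ in Hausdorff distance is open and dense. Density uses Lemma~\ref{Lem:connectinglemma}: given $x\in\Lambda$, one perturbs $X$ twice to connect $x$ into the unstable manifold of $P$ and the stable manifold of $P$ into $x$, thereby adjoining $x$ to $H(P,U)$ in the perturbed system; combined with item (5) and the periodic-limit statement of Lemma~\ref{Lem:transitive-periodic-likt} this yields the generic equality. Item (4) is the most delicate: given $P_1,P_2\subset U$ with $H(P_1,U)=H(P_2,U)$ and $\operatorname{Ind}(P_1)<\operatorname{Ind}(P_2)$, the two periodic orbits possess transverse heteroclinic connections in $U$ in both directions, and the flow version of the Franks lemma (see \cite[Theorem A.1]{BGV}) allows a $C^1$ perturbation supported in a thin tube around the heteroclinic cycle (contained in $V$) to create a hyperbolic periodic orbit of any prescribed intermediate index $i$, still homoclinically related to $P_1$ and $P_2$ in $V$. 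A Baire argument over a countable dense collection of data $(U,V,i,k)$ then yields $\mathcal{R}_4$.

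The main obstacle is item (4): the Franks-type perturbation must simultaneously realize the target index $i$ and preserve (or recreate) the heteroclinic relations of the new orbit with $P_1$ and $P_2$, so that it genuinely lies in the relative homoclinic class in $V$. This forces one to localize the perturbation away from the existing transverse intersections that witness $H(P_1,U)=H(P_2,U)$, which is precisely the role played by the extra room $V\supsetneq\overline{U}$ in the statement.
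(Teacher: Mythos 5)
The paper's proof of this proposition consists entirely of citations: Item 1 is Kupka--Smale, Item 2 is a standard connecting-lemma argument from Gan--Wen \cite{GW}, Item 3 is from Wen \cite{Wen3}, Item 4 is the flow version of Abdenur--Bonatti--Crovisier--D\'iaz--Wen \cite{ABCDW}, Item 5 is Crovisier \cite{Cr06}, Item 6 is Bonatti--Crovisier \cite[Corollaire 1.13]{BoC04}. Your blind reconstruction uses the right Baire-category framework and correctly identifies the mechanism behind Items 1, 2, 3 and 6, but your sketches of Items 4 and 5 both contain genuine gaps.

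For Item 5, ``iterating Lemma~\ref{Lem:connectinglemma} along each jump'' of a long $\delta$-pseudo-orbit does not work: the ordinary connecting lemma is not stable under iteration, because each application creates a perturbation supported in a tube, and the resulting tubes for consecutive jumps interfere with one another and with the intervening orbit segments in an uncontrolled way. Crovisier's result is the much deeper \emph{connecting lemma for pseudo-orbits}, which required new ideas (perturbation boxes, tiled cubes, and a careful management of the overlapping supports); it does not reduce to a naive repeated use of the Wen--Xia lemma. For Item 4, the claim that $P_1$ and $P_2$ ``possess transverse heteroclinic connections in $U$ in both directions'' cannot hold when $\operatorname{Ind}(P_1)<\operatorname{Ind}(P_2)$: counting dimensions shows at least one of the intersections $W^s(P_1)\cap W^u(P_2)$, $W^u(P_1)\cap W^s(P_2)$ must be of heterodimensional (non-transverse) type. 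Moreover, the Franks lemma \cite[Theorem A.1]{BGV} does not \emph{create} new periodic orbits; it only modifies the derivative cocycle along an already-existing orbit. The actual ABCDW argument first uses the connecting lemma to produce, from the heterodimensional cycle between $P_1$ and $P_2$, new long periodic orbits that spend controllable fractions of time shadowing $P_1$ and $P_2$, and only then applies a Franks-type perturbation to adjust the relevant central Lyapunov exponent through zero so as to realize the prescribed intermediate index $i$, while keeping the orbit homoclinically related to both $P_1$ and $P_2$ inside $V$. These two steps are the real content of the theorem and cannot be compressed to ``apply the Franks lemma in a thin tube.''
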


Item 1 is from the classical Kupka-Smale theorem. Item 2 comes from a standard application of connecting lemma, one can see \cite{GW} for a proof. Item 3 is from the fact that hyperbolic periodic orbits are persistent and a standard generic argument, one can see \cite{Wen3} for a proof. Item 4 is a local version of the main result of \cite{ABCDW}. Item 5 is one of the main results of \cite{Cr06}. Item 6 is \cite[Corollaire 1.13]{BoC04}
\paragraph{Saddle value and Shilnikov bifurcation}

Let $\sigma$ be a hyperbolic singularity of $X$. Assume that the
eigenvalues of $DX(\sigma)$ can be arranged by the following:
$$Re(\lambda_m)<\cdots<Re(\lambda_1)<0<Re(\eta_1)<\cdots<Re(\eta_n).$$
Denote by $I(\sigma)=I(\sigma,X)=Re(\lambda_1)+Re(\eta_1)$. $I(\sigma)$ is said to be the \emph{saddle value} as in \cite{SSTC,ZGW08}.
 From the Shilnikov bifurcation theory, we have the following proposition.

\begin{Proposition}[\cite{SSTC}]\label{Pro:Shilnikov}
Let $X\in\mathcal{X}^1(M)$ and $\sigma$ be a singularity of $X$ with
saddle value $I(\sigma)<0$. If there is a homoclinic orbit $\Gamma$
of $\sigma$, then there exists an arbitrary small perturbation $Y$ of
$X$ such that $Y$ has a periodic orbit of index ${\rm Ind}(\sigma)$, which
is close to the homoclinic orbit $\Gamma$.
\end{Proposition}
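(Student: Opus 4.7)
The plan follows the classical Shilnikov return-map argument: construct a Poincar\'e first return map along $\Gamma$, deform $X$ slightly to break the homoclinic loop transversally, and locate a hyperbolic fixed point of the return map close to the broken loop. First I would perform an arbitrarily $C^1$-small perturbation supported in a small neighborhood $U_\sigma$ of $\sigma$ that replaces $X$ inside a smaller neighborhood by the linear field $DX(\sigma)(\cdot-\sigma)$. This bump-function linearization is legitimate in the $C^1$ category because $X(x)-DX(\sigma)(x-\sigma)=o(|x-\sigma|)$ in $C^1$ at $\sigma$, so blending linearly over a sufficiently small scale produces a $C^1$-small deformation. Inside, the flow becomes the explicit linear $\Phi_t(x,y)=(e^{At}x,e^{Bt}y)$ on $E^s_\sigma\oplus E^u_\sigma$, with $A,B$ the hyperbolic blocks of $DX(\sigma)$.

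Next pick $p\in\Gamma\cap W^s_{loc}(\sigma)$ near where $\Gamma$ enters the linearization region and $q\in\Gamma\cap W^u_{loc}(\sigma)$ near where it exits, and take codimension-one cross sections $\Sigma^{in}\ni p$ and $\Sigma^{out}\ni q$ transverse to the flow. Parametrize $\Sigma^{in}$ by $(\xi,\eta)$, with $\xi$ in the stable directions transverse to $\Gamma$ and $\eta$ in the unstable directions, arranged so that $\Gamma$ exits along the slowest unstable direction (after a preliminary coordinate choice). The local map $T_{\mathrm{loc}}:\Sigma^{in}\to\Sigma^{out}$ is then computable from the linear flow: the transit time satisfies $\tau\sim -\tfrac{1}{Re(\eta_1)}\log|\eta|$, the stable component contracts to order $|\eta|^{|Re(\lambda_1)|/Re(\eta_1)}$, and the remaining unstable components are of bounded order. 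The global map $T_{\mathrm{glob}}:\Sigma^{out}\to\Sigma^{in}$ is a diffeomorphism given by the flow along $\Gamma\setminus U_\sigma$.

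A second arbitrarily $C^1$-small perturbation supported on a compact arc of $\Gamma\setminus U_\sigma$ shifts the image of $q$ under the global map to a target point $(\xi_0,\eta_0)\in\Sigma^{in}$ with $|\eta_0|$ small but nonzero. On a thin rectangle $R$ about $(\xi_0,\eta_0)$, the composed return map $T=T_{\mathrm{glob}}\circ T_{\mathrm{loc}}$ is strongly contracting in the $\xi$ direction by the factor $|\eta|^{|Re(\lambda_1)|/Re(\eta_1)}$; since $I(\sigma)<0$ is precisely $|Re(\lambda_1)|/Re(\eta_1)>1$, this is a superlinear contraction. The $\eta$-component of $T$ is controlled by the bounded diffeomorphism $T_{\mathrm{glob}}$, so by choosing $R$ appropriately $T(R)\subset R$. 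Brouwer's fixed-point theorem (or Banach's, in the stable factor) delivers a fixed point of $T$, corresponding to a periodic orbit $P$ of the twice-perturbed vector field $Y$ contained in any prescribed neighborhood of $\Gamma$. Because $P$ shadows $\Gamma$ and spends most of its period close to $\sigma$, the normal linear Poincar\'e flow along $P$ is dominated by $DX(\sigma)|_{E^s_\sigma\oplus E^u_\sigma}$; this yields exactly $\dim E^s_\sigma$ contracting normal Lyapunov exponents, so $\mathrm{Ind}(P)=\mathrm{Ind}(\sigma)$.

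The hard part will be keeping the whole argument strictly inside the $C^1$ category. Smooth Sternberg-type linearization requires $C^r$ regularity with $r\geq 2$ and non-resonance, while Hartman--Grobman gives only topological conjugacy, which is insufficient for the rate estimates on $T_{\mathrm{loc}}$. The bump-function linearization step absorbs the nonlinear error into the first perturbation, which is permissible only because the proposition asks for \emph{some} arbitrarily $C^1$-small perturbation $Y$. A secondary delicacy is, in higher dimensional unstable subspaces, tracking which unstable direction the homoclinic orbit leaves along and verifying that the asymptotics are controlled by the \emph{slowest} expanding eigenvalue $\eta_1$ (and symmetrically the slowest contracting $\lambda_1$); this is exactly the content of the saddle-value hypothesis $I(\sigma)<0$.
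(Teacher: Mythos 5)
The paper does not prove this proposition; it is quoted verbatim from the reference \cite{SSTC} and used as a black box, so there is no internal proof to compare against. What you have written is a from-scratch reconstruction of the classical Shilnikov homoclinic-loop argument, and the architecture is right: a $C^1$-small bump-function linearization near $\sigma$ (legitimate precisely because $X(x)-DX(\sigma)(x-\sigma)=o(|x-\sigma|)$ together with its derivative), a local/global factorization of the first-return map with the local transit producing the superlinear contraction $|\eta|^{|\mathrm{Re}\,\lambda_1|/\mathrm{Re}\,\eta_1}$, the identification $I(\sigma)<0\Leftrightarrow |\mathrm{Re}\,\lambda_1|/\mathrm{Re}\,\eta_1>1$, and a second perturbation along $\Gamma\setminus U_\sigma$ to dislodge $W^u(\sigma)$ from $W^s(\sigma)$ so the return map is genuinely defined on a small rectangle. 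This is exactly how the result is proved in \cite{SSTC}.

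There is, however, one concrete error in the fixed-point step. When $\dim E^u_\sigma>1$ the claim that one can choose $R$ so that $T(R)\subset R$ is false: on $\Sigma^{in}$ write $\eta=(\eta_1,\dots,\eta_u)$ with $\eta_1$ the slowest unstable coordinate governing the transit time $\tau\sim\tfrac{1}{\mathrm{Re}\,\eta_1}\log(\delta/|\eta_1|)$; then the local map multiplies each $\eta_j$, $j\ge 2$, by $(\delta/|\eta_1|)^{\mathrm{Re}\,\eta_j/\mathrm{Re}\,\eta_1}$, which blows up as $\eta_1\to 0$. So $T$ strongly \emph{expands} in the non-leading unstable directions, the image of any reasonable rectangle is never contained in that rectangle, and Brouwer as invoked does not apply. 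The standard repair, which is what one should write, is to locate the fixed point by a hyperbolic (covering-relation/implicit-function) argument: solve the contracting coordinates $\xi$ (and the single ``center'' direction tied to $\eta_1$, contracting because $I(\sigma)<0$) by Banach, then solve the expanding coordinates $\eta_2,\dots,\eta_u$ by inverting the expansion, or equivalently by a topological degree argument across $R$. With that replacement the index count you give -- namely $(\dim E^s_\sigma-1)$ strongly contracting multipliers, one weakly contracting multiplier from the $\lambda_1$--$\eta_1$ interaction (contracting exactly because $I(\sigma)<0$), and $\dim E^u_\sigma-1$ expanding ones, giving $\mathrm{Ind}(P)=\mathrm{Ind}(\sigma)$ -- is correct. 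You do flag the higher unstable codimension as a delicacy, but the specific mechanism ``$T(R)\subset R$ plus Brouwer'' is the wrong one there and needs to be replaced.
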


One can change the sign of the saddle value under some conditions by small perturbations.

\begin{Lemma}\label{Lem:saddlevalueperturbation}
Given a $C^1$ vector field $X$, for any neighborhood $\mathcal{U}$ of $X$, there exist a neighborhood $\cV$ of $X$ and $\delta>0$ such that for any hyperbolic singularity $\sigma$ of $Z\in\cV$ with $|I(\sigma, Z)|<\delta$ and any neighborhood $U\subset M$ of $\sigma$, there is $Y\in\mathcal{U}$ such that $I(\sigma_Y, Y)<0$ and $Z(x)=Y(x)$ for any $x\in M\setminus U$.
\end{Lemma}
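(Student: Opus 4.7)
The plan is to reduce the problem to a local linear perturbation at $\sigma$ and then apply Franks' Lemma for flows. Given the neighborhood $\mathcal{U}$ of $X$, I would first invoke Franks' Lemma for flows (\cite[Theorem A.1]{BGV}) to obtain a $C^1$-neighborhood $\mathcal{V}$ of $X$ and a constant $\varepsilon>0$ with the following property: whenever $Z\in\mathcal{V}$, $\sigma$ is a hyperbolic singularity of $Z$, $U$ is any neighborhood of $\sigma$, and $A:T_\sigma M\to T_\sigma M$ is a linear map with $\|A-DZ(\sigma)\|<\varepsilon$, there exists $Y\in\mathcal{U}$ with $Y\equiv Z$ on $M\setminus U$, $Y(\sigma)=0$, and $DY(\sigma)=A$. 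I would then set $\delta=\varepsilon/3$.

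Given $Z\in\mathcal{V}$ and a hyperbolic singularity $\sigma$ of $Z$ with $|I(\sigma,Z)|<\delta$, my construction of $A$ would shift the entire stable spectrum of $DZ(\sigma)$ by $-2\delta$ while leaving the unstable spectrum fixed. Concretely, pick an orthonormal basis of $T_\sigma M$ in which $DZ(\sigma)$ is in real Schur form with the stable block listed first:
\[
DZ(\sigma)=Q\begin{pmatrix} T_s & \star \\ 0 & T_u \end{pmatrix}Q^{T},\qquad Q \text{ orthogonal},
\]
and set
\[
A:=DZ(\sigma)-2\delta\,Q\begin{pmatrix} I_s & 0 \\ 0 & 0\end{pmatrix}Q^{T}.
\]
Then $A$ is still upper block-triangular in the same basis, with the diagonal entries of the stable block shifted by $-2\delta$ and the unstable ones unchanged, so its spectrum is $\{\lambda_i-2\delta\}\cup\{\eta_j\}$. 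In particular $\sigma$ persists as a hyperbolic singularity of the resulting $Y$ with the same index (so $\sigma_Y=\sigma$), and
\[
I(\sigma_Y,Y)=\mathrm{Re}(\lambda_1)-2\delta+\mathrm{Re}(\eta_1)=I(\sigma,Z)-2\delta<\delta-2\delta<0,
\]
which is the desired conclusion. Franks' Lemma applied to this $A$ and the given $U$ then delivers the required $Y$.

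The one genuine subtlety will be that the perturbation $A-DZ(\sigma)$ must have operator norm at most $\varepsilon$ uniformly over all $Z\in\mathcal{V}$ and all hyperbolic singularities $\sigma$ of $Z$, with no a priori control on the angle between $E^s(\sigma)$ and $E^u(\sigma)$. A naive construction using the oblique spectral projection parallel to $E^u$ would give perturbation norm of order $2\delta\,\|\pi^s\|$, which blows up when $E^s$ and $E^u$ are nearly tangent. Working in an orthonormal Schur frame sidesteps this: the matrix $Q\,\mathrm{diag}(I_s,0)\,Q^{T}$ is a genuine orthogonal projection of norm $1$, so $\|A-DZ(\sigma)\|=2\delta<\varepsilon$ regardless of the geometry of the hyperbolic splitting at $\sigma$. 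Beyond this observation, the argument is routine.
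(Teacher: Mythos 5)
Your proof is correct, and it takes a genuinely different route than the paper's. The paper perturbs $Z$ near $\sigma$ by a bump-function cutoff of the vector field $y\mapsto a\,y$ with $a<0$, which replaces $DZ(\sigma)$ by $DZ(\sigma)+aI$; this shifts \emph{every} eigenvalue by $a$ and changes the saddle value by $2a$, but the paper then has to assert (without spelling it out) that one can pick $a$ so that no unstable eigenvalue is pushed across the imaginary axis — a point that does hold (one checks $\mathrm{Re}(\eta_1)>I(\sigma,Z)/2$ so the window of admissible $a$ is nonempty), but is left for the reader. Your construction instead targets only the stable part: writing $DZ(\sigma)$ in a real Schur frame with the stable block first and subtracting $2\delta$ times the orthogonal projection $Q\,\mathrm{diag}(I_s,0)\,Q^T$ shifts the stable eigenvalues by $-2\delta$ while leaving the unstable spectrum, hence the index, untouched; the observation that the orthogonal projection has operator norm $1$ uniformly over $Z$ and $\sigma$ is exactly the right way to get the $\varepsilon$-bound, and your computation of the new saddle value is correct. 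The one thing to tidy is the citation: Theorem A.1 of \cite{BGV} (Franks' Lemma for flows) concerns perturbations of the linear Poincar\'e flow along \emph{periodic orbits}, not linearizations at singularities, so it does not directly give the local perturbation you want. What you actually need is the elementary bump-function statement that one can realize any linear map $A$ with $\|A-DZ(\sigma)\|$ small as $DY(\sigma)$ for a $C^1$-small, compactly supported perturbation $Y$ of $Z$ near $\sigma$ — which is precisely the construction the paper carries out explicitly, with the bump function $b$. Replacing the reference by that direct construction closes the only gap.
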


\begin{proof}
Let $b: [0,+\infty)\to[0,1]$ be the bump function with $b(x)=1$ for $x\in[0,1/3]$, $b(x)=0$ for $x\geq 1$ and $0\leq b'(x)<4$ for all $x\in[0,+\infty)$.

Given a neighborhood $\cU$ of $X$, there are a neighborhood $\cV$ of $X$ and $\varepsilon>0$ such that for any $Z\in\cV$, any vector field $Y$ which is $\varepsilon$ $C^1$-close to $Z$ is contained in $\cU$. Take $\delta=\varepsilon/10$.

Now we consider a vector field $Z\in\cV$ and a hyperbolic singularity $\sigma$ with $I(\sigma,Z)\in[0,\delta)$, one can have $Y\in\cU$ such that $I(\sigma_Y,Y)<0$. Now we give the construction of $Y$. In a local chart of $\sigma$, for which we identity $\sigma=0\in\RR^d$, for $a\in(-\varepsilon/5,0)$, for any arbitrarily small $r>0$, the expression of $Y$ is

$$ Y(y)=Z(y)+a\cdot b(\frac{\|(y)\|}{r})y, \forall y\in B_r(\sigma).$$
We have the following facts:
\begin{itemize}

\item $I(\sigma_Y,Y)=I(\sigma,Z)+2a$. Thus, there is some $a$ such that $\sigma_Y$ and $\sigma$ have the same index, and $I(\sigma_Y,Y)<0$.

\item The perturbation can be chosen in arbitrarily small neighborhood of $\sigma$ since $r$ can be chosen arbitrarily small independent of $a$ and $\varepsilon$.

\end{itemize}
The proof is now complete.
%
%Since $M$ is compact we can find a neighborhood $\cV$ of $X$ and constants $\varepsilon>0$ and $r_0>0$ such that for any $Z\in \cV$, if a vector field $Y$ is given by the following way: there is $x\in M$ and $0<r<r_0$ and $|a|<\varepsilon$ such that
%$$Y(y)=Z(y), \forall y\notin B_{r}(x);$$
%$$ Y(y)= (\exp_x)_*[(\exp_x^{-1})_*Z(\exp_x^{-1}(y))+a\cdot b(\frac{\|\exp_x^{-1}(y)\|}{r})\exp_x^{-1}(y)], \forall y\in B_r(x),$$
%then $Y$ is contained in $\mathcal{U}$. Note here that every eigenvalue of $DY(\sigma)$ is one of the eigenvalues of $DZ(\sigma)$ plus $a$.
%
%We will see that $\cV$ and $\delta=2\varepsilon$ satisfy the requests of the lemma. Suppose that there is a singularity $\sigma$ of a vector field $Z\in\cV$ with $0\leq I(\sigma,Z)<\delta$. We can take $a<0$ such that $I(\sigma,Z)+2a<0$ and for any eigenvalue $\lambda$ of $DZ(\sigma)$, $\lambda+a$ and $\lambda$ has the same sign. For any neighborhood $U$ of $\sigma$, we can take $r_0>r>0$ small enough such that $B_r(\sigma)\subset U$. Now we take a perturbation $Y$ of $Z$ in $B_r(\sigma)$ as in above, we have $I(\sigma,Y)=I(\sigma, X)+2a<0$. This ends the proof of Lemma \ref{Lem:saddlevalueperturbation}.
\end{proof}

\section{The periodic orbits in transitive sets}
In this section we will discuss the dominated splitting along
periodic orbits on normal bundle, then extend it to $\Lambda$. Let
$X\in\mathcal{X}^1(M)$, a sequence $(P_n,Y_n)$ is called an
$i$-fundamental sequence of $X$ if $Y_n$ converges $X$ in $C^1$
topology, the periodic orbit $P_n$ of $Y_n$ have index $i$ and
converge in the Hausdorff metric. The Hausdorff limit $\Gamma$ of
$P_n$ is called an \emph{$i$-periodic limit} of $X$. Note
that every $i$-periodic limit is a compact invariant set of $X$.

\begin{Lemma}\label{Lem:periodiclimitfromsingularity}
Assume that $\Lambda$ is a non-trivial transitive set. If $\Lambda$ contains a hyperbolic singularity $\sigma$ with saddle value $I(\sigma)\leq 0$, then $\Lambda$ contains an ${\rm Ind}(\sigma)$-periodic limit. Symmetrically, if $\Lambda$ contains a hyperbolic singularity $\sigma$ with saddle value $I(\sigma)\geq 0$, then $\Lambda$ contains an $ ({\rm Ind}(\sigma)-1)$-periodic limit.
\end{Lemma}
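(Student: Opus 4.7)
The plan is, for every $\varepsilon>0$, to exhibit a $C^1$-perturbation $Y_\varepsilon$ of $X$ carrying a hyperbolic periodic orbit $P_\varepsilon$ of index ${\rm Ind}(\sigma)$ inside the $\varepsilon$-neighbourhood $B_\varepsilon(\Lambda)$; letting $\varepsilon\to 0$ and extracting a Hausdorff-convergent subsequence of the $P_\varepsilon$ then yields an ${\rm Ind}(\sigma)$-periodic limit of $X$ contained in $\Lambda$.

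The first step is to produce a homoclinic loop at $\sigma$ that is shadowed by $\Lambda$. By Lemma~\ref{Lem:nonemptyonmanifolds}, pick $x\in (W^s(\sigma)\setminus\{\sigma\})\cap\Lambda$ and $y\in (W^u(\sigma)\setminus\{\sigma\})\cap\Lambda$; pick also a transitive point $z\in\Lambda$ with $\omega(z)=\Lambda$, so that the forward orbit of $z$ visits an arbitrarily small neighborhood of $y$ first and of $x$ later. Applying the connecting lemma (Lemma~\ref{Lem:connectinglemma}) with regular non-periodic pivots chosen on this orbit segment of $z$ --- once near $y$ to glue the future of $y$ onto the orbit of $z$, and once near $x$ to glue that orbit onto the past of $x$ --- produces a vector field $Y$, arbitrarily $C^1$-close to $X$, whose forward $Y$-orbit of $y$ passes through $x$. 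Since $y\in W^u(\sigma,Y)$, $x\in W^s(\sigma,Y)$, and $\sigma$ persists as a hyperbolic singularity of $Y$, this gives a homoclinic loop $\Gamma$ at $\sigma$; by construction $\Gamma\subset B_\varepsilon(\Lambda)$.

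The second step is to convert $\Gamma$ into a periodic orbit of the right index via Shilnikov. If $I(\sigma,X)<0$, then by continuity of the eigenvalues at $\sigma$ one has $I(\sigma,Y)<0$, so Proposition~\ref{Pro:Shilnikov} furnishes an arbitrarily small further perturbation $Y_\varepsilon$ of $Y$ bearing a periodic orbit $P_\varepsilon$ of index ${\rm Ind}(\sigma)$ close to $\Gamma$, hence inside $B_\varepsilon(\Lambda)$. In the borderline case $I(\sigma,X)=0$, I would first apply Lemma~\ref{Lem:saddlevalueperturbation} in a tiny ball $U\ni\sigma$ chosen disjoint from $x,y$ and from the connecting-lemma tube, producing a perturbation of $X$ with $I(\sigma,\cdot)<0$. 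Since this modification is supported in $U$ and $\sigma$ remains hyperbolic, $C^1$-persistence of $W^s(\sigma)$ and $W^u(\sigma)$ displaces these manifolds outside $U$ only slightly, so one can choose $x',y'$ near $x,y$ in the new manifolds and rerun the connecting-lemma argument; the composite perturbation remains $\varepsilon$-small, carries a homoclinic loop at $\sigma$ with strictly negative saddle value, and Shilnikov again delivers $P_\varepsilon$.

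The symmetric case $I(\sigma)\ge 0$ follows by applying the first case to the time-reversed field $-X$: $\Lambda$ remains transitive for $-X$ (transitivity is time-reversal invariant for flows on compact metric sets), $I(\sigma,-X)=-I(\sigma,X)\le 0$, and ${\rm Ind}_{-X}(\sigma)=\dim M-{\rm Ind}_X(\sigma)$. The first case yields a periodic limit of $-X$ in $\Lambda$ consisting of orbits of $(-X)$-index $\dim M-{\rm Ind}_X(\sigma)$; accounting for the one-dimensional flow direction of a periodic orbit, the same orbits have $X$-index $\dim M-1-(\dim M-{\rm Ind}_X(\sigma))={\rm Ind}(\sigma)-1$. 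I expect the main obstacle to be precisely the borderline $I(\sigma)=0$: the saddle-value perturbation of Lemma~\ref{Lem:saddlevalueperturbation} and the connecting-lemma perturbation must be performed simultaneously without destroying the homoclinic connection, and the proposal is to keep their supports disjoint (one in a ball at $\sigma$, the other in a small tube along a regular orbit) and rely on $C^1$-persistence of $W^{s/u}(\sigma)$ to re-glue the loop with slightly moved endpoints $x',y'$.
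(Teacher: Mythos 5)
Your proposal matches the paper's own proof: use Lemma~\ref{Lem:nonemptyonmanifolds} to locate points of $\Lambda$ on $W^s(\sigma)$ and $W^u(\sigma)$, apply the $C^1$-connecting lemma (Lemma~\ref{Lem:connectinglemma}) to create a homoclinic loop at $\sigma$ shadowing $\Lambda$, invoke the Shilnikov bifurcation (Proposition~\ref{Pro:Shilnikov}) after arranging $I(\sigma)<0$, and treat the symmetric half by time reversal, with the same index bookkeeping ${\rm Ind}_{-X}(\sigma)=\dim M-{\rm Ind}_X(\sigma)$ and $\dim N=\dim M-1$. The only cosmetic difference is the order in the borderline case $I(\sigma)=0$: you apply Lemma~\ref{Lem:saddlevalueperturbation} before connecting, while the paper connects first and then performs ``an additional perturbation if necessary''; both resolve the same compatibility concern you correctly single out as the main obstacle.
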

\begin{proof}
We only prove the lemma in the case of $I(\sigma)\leq 0$ since the case of $I(\sigma)\geq 0$ can be treated as the case that $I(\sigma)\leq 0$ after we reverse the vector field. Since $\Lambda$ is non-trivial and transitive, by Lemma \ref{Lem:nonemptyonmanifolds},  we can find $y_1\in (W^s(\sigma)\setminus\{\sigma\})\cap \Lambda$ and $y_2\in(W^u(\sigma)\setminus\{\sigma\})\cap\Lambda$.

Given any neighborhood $\mathcal{U}\subset\mathcal{X}^1(M)$ of $X$ and any neighborhood $U\subset M$ of $\Lambda$, we will see that there exists $Y\in\mathcal{U}$ such that $Y$ has a homoclinic orbit associated to $\sigma$ in $U$. This is a standard application of connecting lemma. Let $\mathcal{U}$ and $U$ be given neighborhoods. Then we can choose $L_1, T_1, \delta_{0,1}$ and $L_2, T_2, \delta_{0,2}$ be the triples of constants given in Lemma \ref{Lem:connectinglemma} corresponds to $z=y_1$ and $z=y_2$. Then we can take $\delta_1<\delta_{0,1}$ small enough such that the tube $\Delta_1=\cup_{t\in[0,T_1]}B(\varphi_t^X(y_1),\delta_1)\subset U$ and the the positive orbit of $y_1$ does not touch $\Delta_1$ once it leaves $\Delta_1$. Similarly, we can take $\delta_2<\delta_{0,2}$ small enough such that the tube $\Delta_2=\cup_{t\in[0,T_2]}B(\varphi_t^X(y_2),\delta_2)\subset U$ and the the negative orbit of $y_2$ does not touch $\Delta_2$ once it leaves $\Delta_2$. Without loss of generality, we can assume that $y_1, y_2$ in different orbits and $\Delta_1\cap\Delta_2=\emptyset$. Since $\Lambda$ is transitive, there exists $x\in\Lambda$ such that $\Lambda=\omega(x)$. So we can find orbit segment $\varphi_{[t_1, t_2]}(x)$ in $\Lambda$ such that $\varphi_{t_1}(x)\in B_{\delta_2/L_2}(y_2)$ and $\varphi_{t_2}(x)\in B_{\delta_1/L_1}(y_1)$.
Let $t_1<t_0<t_2$ be chosen such that $\varphi_{t_0}(x)\notin(\Delta_1\cup\Delta_2)$. Then we can apply the connecting lemma in $\Delta_1$ and $\Delta_2$ such that there is $Y\in \mathcal{U}$ such that $\varphi^X_{t_0}(x)$ is in the positive orbit of $y_2$ and the negative orbit of $y_1$ with respect to the flow $\varphi_t^Y$. Now we get a homoclinic orbit in $U$ associated to $\sigma$.

With an additional perturbation if necessary, we can assume that $I(\sigma)<0$ with respect to $Y$. Then by Proposition \ref{Pro:Shilnikov}, we can find a vector field $Z$ arbitrarily close to $Y$ such that $Z$ has a periodic orbit of index ${\rm Ind}(\sigma)$ in $U$. So for any neighborhood $\mathcal{U}$ of $X$ and any neighborhood $U$ of $\Lambda$, we can find $Z\in \mathcal{U}$ with a periodic orbit of index ${\rm Ind}(\sigma)$ in $U$. Hence we can find a sequence $Z_n\to X$ with periodic orbit $P_n$ of index ${\rm Ind}(\sigma)$ such that the Hausdorff limit of $\{P_n\}$ is contained in $\Lambda$. This ends the proof of the lemma.
\end{proof}

\begin{Lemma}\label{Lem:limit-dominated}
Assume that $X$ is away from homoclinic tangencies. If $\Lambda$ is an $i$-periodic limit, then
\begin{itemize}
\item $\cN_{\Lambda\setminus{\rm Sing}(X)}$ admits a dominated splitting of index $i$ with respect to $\psi_t$.

\item $\tilde\Lambda$ admits a dominated splitting of index $i$ with respect to $\widetilde\psi_t$.

\end{itemize}

\end{Lemma}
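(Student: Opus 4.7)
The plan is to leverage Proposition~\ref{Pro:dominatedsplitting}, which already gives the uniform domination on periodic orbits, and then pass to the Hausdorff limit. The subtlety is that the usual linear Poincar\'e flow is undefined at singularities, so taking a direct limit inside the normal bundle $N$ over $M$ will not work if $P_n$ approaches $\mathrm{Sing}(X)$. The standard fix is to do the limit at the level of the extended linear Poincar\'e flow on $SM$, and then project back.

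First, since $X \notin \overline{HT}$, Proposition~\ref{Pro:dominatedsplitting} provides a $C^1$ neighborhood $\mathcal{U}$ of $X$ and an integer $l>0$ such that for every $Y\in\mathcal{U}$ and every hyperbolic periodic orbit $P$ of $Y$, the hyperbolic splitting $N_P = N^s(P)\oplus N^u(P)$ is $l$-dominated with respect to $\psi_t^Y$. By hypothesis there exist $Y_n\to X$ and periodic orbits $P_n$ of $Y_n$ of index $i$ with $P_n \to \Lambda$ in Hausdorff distance, and for all large $n$ we have $Y_n\in\mathcal{U}$ and thus $\dim N^s(P_n)=i$ together with uniform $l$-domination.

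Next I lift to the unit sphere bundle. Define $\widetilde P_n=\{Y_n(x)/\|Y_n(x)\|:x\in P_n\}\subset SM$; this is a $\Phi_t^{\#,Y_n}$-invariant compact set on which Proposition~\ref{Pro:extendedLPF} yields an $l$-dominated splitting $N_{\widetilde P_n}=N^1_n\oplus N^2_n$ of index $i$ with respect to $\widetilde\psi_t^{Y_n}$. Passing to a subsequence, $\widetilde P_n$ converges in Hausdorff distance to some compact $\Phi_t^{\#,X}$-invariant set $\widetilde K\subset SM$. Because $Y_n\to X$ in $C^1$, the maps $\widetilde\psi_t^{Y_n}$ converge uniformly on compact sets to $\widetilde\psi_t^X$, and because the $l$-domination inequality is a closed condition, a standard limit argument (taking Grassmannian limits of the subspaces $N^1_n(e_n), N^2_n(e_n)$ along sequences $e_n\to e\in\widetilde K$) produces an $l$-dominated splitting $N_{\widetilde K}=N^1\oplus N^2$ of index $i$ for $\widetilde\psi_t^X$. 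Any point $y\in\Lambda\setminus\mathrm{Sing}(X)$ is the limit of some $x_n\in P_n$, and since $Y_n\to X$ in $C^0$, one has $Y_n(x_n)/\|Y_n(x_n)\|\to X(y)/\|X(y)\|$, so $\widetilde\Lambda\subset\widetilde K$. Restricting the splitting to $\widetilde\Lambda$ gives the second bullet.

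For the first bullet, recall that for $e=X(x)/\|X(x)\|$ with $x\in\Lambda\setminus\mathrm{Sing}(X)$, the fibers and the flow of $\widetilde\psi_t^X$ on $\widetilde\Lambda$ agree with those of $\psi_t^X$ on $N_{\Lambda\setminus\mathrm{Sing}(X)}$. Hence defining $N^j(x):=N^j(X(x)/\|X(x)\|)$ for $j=1,2$ and $x\in\Lambda\setminus\mathrm{Sing}(X)$ yields a $\psi_t^X$-invariant $l$-dominated splitting of index $i$, as required. The main technical obstacle is the limit step for the splitting: controlling the dimensions and ensuring the $l$-domination inequality persists in the limit along sequences $e_n\to e$, which is exactly why we work with $\widetilde\psi_t$ on $SM$ rather than with $\psi_t$ directly, since on $SM$ the normal bundle and the tangent flow are continuous everywhere and insensitive to whether the base point is singular.
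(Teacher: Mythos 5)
Your proof is correct and follows essentially the same route as the paper: invoke Proposition~\ref{Pro:dominatedsplitting} for a uniform $l$-domination constant along periodic orbits near $X$, compactify via the extended linear Poincar\'e flow on $SM$ (the paper uses a set $B^i(\Lambda)$ that plays the role of your $\widetilde K$), pass the splitting to the limit, and project back to $\psi_t$ on $\Lambda\setminus\mathrm{Sing}(X)$. The one point you gloss over as a ``standard limit argument''—that the subspace limits along different sequences $e_n\to e$ agree and assemble into an invariant bundle—is exactly what the paper justifies by the uniqueness of a dominated splitting of a given index, which is worth stating explicitly.
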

\begin{proof}
We assume that $X$ is far away from homolinic tangencies. Denote by
$$B^i(\Lambda)=\{e\in SM: \rho(e)\in\Lambda, \exists X_n\to X, \text{ and } p_n\in Per(X_n),$$
$$\ \ \ \ \ \ \ \ \ \ \ \ \ \ \ \ \  {\rm Ind}(p_n)=i, Orb_{X_n}(p_n)\subset U, \text{ such  that }
\frac{X_n(p_n)}{\|X_n(p_n)\|}\to e \}.$$
By the assumption that $\Lambda$ is an $i$-periodic limit, then the set
$$\{e=\frac{X(x)}{\|X(x)\|}:x\in\Lambda\setminus Sing(X)\}\subset B^i(\Lambda).$$
This implies $\tilde\Lambda\subset B^i(\Lambda)$. Let $e\in B^i(\Lambda)$ and $\{p_n\}$ be a sequence of hyperbolic periodic point of $X_n$ of index $i$ such that $X_n\to X$, $X_n(p_n)/\|X_n(p_n)\|\to e$ as $n\to\infty$. Denoted by $P_n$ the orbit of $p_n$ of $X_n$. Then from Proposition \ref{Pro:dominatedsplitting}, we know that for $n$ large, the individual hyperbolic splittings $N_{P_n}=N^s(P_n)\oplus N^u(P_n)$ of the usual linear Poincar\'e flow $\psi_t$ on $P_n$, put together, are $l$-dominated splitting. Now take limits $N^s(e)=\lim_{n\to\infty} N^s(p_n)$ and $N^u(e)=\lim_{n\to\infty}N^u(p_n)$ (here we take a subsequence such that the limits exist if necessary).  This gives a splitting $N_e=N^s(e)\oplus N^u(e)$ at every $e\in B^i(\Lambda)$ with the property
$$\frac{\|\tilde\psi_t|_{N^s(e)}\|}{m(\tilde\psi_{t}|_{N^u(e)})}\leq 1/2 \ \ \ \text{and}\ \ \ \ \frac{\|\tilde\psi_{-t}|_{N^u(e)}\|}{m(\tilde\psi_{-t}|_{N^s(e)})}\leq 1/2$$
for all $t\geq l$ where $m(A)$ denotes the mininorm of a linear map $A$. By the uniqueness of dominated splitting we know that $N_e=N^s(e)\oplus N^u(e)$ is uniquely determined and is $\tilde\psi_t$-variant hence we get a dominated splitting
$$N_{B^i(\Lambda)}=N^s\oplus N^u$$
of index $i$ on $B^i(\Lambda)$ with respect to $\tilde\psi_t$. In particular, $\tilde\Lambda$ have a dominated splitting of index $i$ with respect to $\tilde\psi_t$ and then $\Lambda\setminus Sing(X)$ have a dominated splitting of index $i$ with respect to $\psi_t$.
\end{proof}

\begin{Lemma}\label{Lem:splittingonperiodic}
There is a residual set $\mathcal{R}\subset{\cal X}^1(M)$ such that for any $X\in\mathcal{R}\setminus\overline{HT}$ and any isolated transitive set
$\Lambda$ of $X$, if $\Lambda$ contains an $i$-periodic limit, then there are a $C^1$ neighborhood
$\cal U$ of $X$, a neighborhood $U$ of $\Lambda$ and a constant
$l>0$ such that for any periodic orbit $P\subset U$ of
$Y\in\cal U$, $P$ admits an $l$-dominated splitting of index
$i$ in the normal bundle w.r.t. the linear Poincar\'e flow.
\end{Lemma}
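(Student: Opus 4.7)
The strategy is to apply Lemma~\ref{Lem:limit-dominated} to produce a dominated splitting on a suitable compactification in $SM$ and then to transfer it to periodic orbits of nearby vector fields by persistence. I first take $\mathcal{R}$ to be the residual set of Proposition~\ref{Pro:generic} and fix $X\in\mathcal{R}\setminus\overline{HT}$ together with an isolated transitive set $\Lambda$, isolating neighborhood $U_0$, containing an $i$-periodic limit $\Gamma$. By item~3 of Proposition~\ref{Pro:generic} applied to $\Gamma$, the vector field $X$ itself admits periodic orbits of index $i$ converging to $\Gamma$ in the Hausdorff topology; by the isolation of $\Lambda$ they lie in $\Lambda$ eventually, yielding a hyperbolic periodic orbit $Q\subset\Lambda$ of $X$ of index $i$. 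Item~2 then gives $\Lambda=H(Q,U_0)$, so $\Lambda$ is the closure in $U_0$ of the index-$i$ periodic orbits of $X$ homoclinically related to $Q$.

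Next, following the proof of Lemma~\ref{Lem:limit-dominated}, I form the set $B^i(\Lambda)\subset SM$ of unit vectors $e$ with $\rho(e)\in\Lambda$ arising as a limit of $X_n(p_n)/\|X_n(p_n)\|$ for some $X_n\to X$ and periodic points $p_n\in\Per(X_n)$ of index $i$ with $\orb_{X_n}(p_n)\subset U_0$. The density statement from the previous paragraph gives $\tilde\Lambda\subset B^i(\Lambda)$, and the argument of Lemma~\ref{Lem:limit-dominated} equips $B^i(\Lambda)$ with an $l_0$-dominated splitting $E\oplus F$ of index $i$ with respect to $\tilde\psi_t^X$, where $l_0$ is the constant from Proposition~\ref{Pro:dominatedsplitting}. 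By the standard persistence theorem for dominated splittings, applied to the flow $\Phi_t^{\#}$ on the compact manifold $SM$ and to the cocycle $\tilde\psi_t$ on $N_{SM}$, there exist an open neighborhood $W\subset SM$ of $B^i(\Lambda)$, a $C^1$-neighborhood $\mathcal{U}\subset\mathcal{X}^1(M)$ of $X$, and a constant $l\geq l_0$ such that, for every $Y\in\mathcal{U}$, any compact subset of $W$ invariant under the flow on $SM$ induced by $Y$ carries an $l$-dominated splitting of index $i$ with respect to $\tilde\psi_t^Y$.

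It remains to select a sufficiently small neighborhood $U\subset U_0$ of $\Lambda$ so that, for every $Y\in\mathcal{U}$ and every periodic orbit $P\subset U$ of $Y$, the lift $\widetilde P=\{Y(p)/\|Y(p)\|:p\in P\}$ is contained in $W$. Once this is achieved, the $l$-dominated splitting of index $i$ on $W$ restricts to $\widetilde P$, and via the canonical identification of $\tilde\psi_t^Y|_{\widetilde P}$ with the linear Poincar\'e flow $\psi_t^Y$ on $N_P$ descends to the required $l$-dominated splitting on $N_P$. For portions of $P$ bounded away from singularities, $\widetilde P\subset W$ is immediate from continuity of the lift combined with $\tilde\Lambda\subset B^i(\Lambda)\subset W$. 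The hard case is when $P$ passes close to a singularity $\sigma_Y$ of $Y$ near some $\sigma\in\Lambda$: the local hyperbolic dynamics forces $Y(p)/\|Y(p)\|$ to cluster in $E^s_{\sigma_Y}\cup E^u_{\sigma_Y}$ along such a passage, and using Lemma~\ref{Lem:nonemptyonspace} together with the density of index-$i$ periodic approximations of $\Lambda$ near $\sigma$, one verifies that $B^i(\Lambda)\cap S_\sigma M$ absorbs all admissible approach directions, so that $\widetilde P\subset W$ once $U$ is taken small enough. Controlling $\widetilde P$ near singularities of $\Lambda$ is the main technical obstacle.
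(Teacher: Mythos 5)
Your strategy diverges from the paper's at a crucial point, and the divergence introduces a genuine gap. The paper's proof is \emph{indirect}: after establishing that $\Lambda$ is robustly chain transitive (via items 3, 5, 6 of Proposition~\ref{Pro:generic}), it shows that \emph{any} periodic orbit $P\subset U$ of $Y\in\mathcal U$ (regardless of its own index) is contained in an $i$-periodic limit of $Y$. This is done by making a small generic perturbation $Z\in\mathcal R$ with $Z$ near $Y$, observing that $P_Z\subset\Lambda_Z=H(Q_Z,U)$ by item 2 of Proposition~\ref{Pro:generic}, and then shadowing $P_Z$ by index-$i$ periodic orbits of $Z$. Then Lemma~\ref{Lem:limit-dominated}, applied to $Y$ and that limit set, immediately yields the $l$-dominated splitting on $N_P$. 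Crucially, the constant $l$ comes once and for all from Proposition~\ref{Pro:dominatedsplitting}; no separate perturbation-stability theorem for dominated splittings is needed.

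Your approach instead tries to \emph{export} the dominated splitting from the fixed set $B^i(\Lambda)\subset SM$ to nearby vector fields by a persistence theorem on the compact manifold $SM$, and then to contain $\widetilde P$ in a neighborhood $W$ of $B^i(\Lambda)$. There are two problems. First, the lemma must treat periodic orbits $P\subset U$ of \emph{any} index, not just index $i$; you never explain why the lift $\widetilde P$ of a nearby index-$j$ periodic orbit ($j\ne i$) stays close to $B^i(\Lambda)$, which by construction records only index-$i$ directions. Second, even for index-$i$ orbits, your treatment of passages near singularities is an unproven assertion: Lemma~\ref{Lem:nonemptyonspace} gives only that $\tilde\Lambda$ meets $E^s_\sigma$ and $E^u_\sigma$, not that $B^i(\Lambda)\cap S_\sigma M$ contains a neighborhood of \emph{all} admissible entry/exit directions for periodic orbits of nearby $Y$. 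You flag this as ``the main technical obstacle'' but do not overcome it; in fact, controlling which directions inside $E^s_\sigma$ and $E^u_\sigma$ can be approached by periodic orbits is exactly the delicate content of lemmas such as Lemma~\ref{Lem:stableoutside} and Lemma~\ref{Lem:put-outside}, and it cannot be brushed aside. The paper's indirect route is designed precisely to avoid ever having to make such a claim.

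A repair is to abandon the persistence-on-$SM$ step and instead argue as in the paper: use the robust chain transitivity of $\Lambda$ (which you did not invoke), pick a generic $Z$ near $Y$ fixing a hyperbolic continuation $P_Z$, deduce $P_Z\subset H(Q_Z,U)$, and conclude that $P$ is contained in an $i$-periodic limit of $Y$; then apply Lemma~\ref{Lem:limit-dominated} directly to $Y$.
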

\begin{proof} Let $\mathcal{R}$ be the residual set chosen in Proposition \ref{Pro:generic}. Let $X\in\mathcal{R}\setminus \overline{HT}$ and $\Lambda$ be an isolated transitive set of $X$. Note here that for a locally maximal invariant set of a $C^1$-generic vector field, transitivity is equivalent to chain transitivity by Item 3 and Item 5 of Proposition \ref{Pro:generic}. Thus $\Lambda$ is robustly chain transitive by Item 6 of Proposition \ref{Pro:generic}.

Let $\mathcal{U}_1\subset \mathcal{X}^1(M)$ and $U\subset M$ be the neighborhood of $X$ and $\Lambda$ such that for any $Y\in\mathcal{U}_1$, the maximal invariant set $\Lambda_Y$ in $U$ is chain transitive. Let $\mathcal{U}_2$ be the neighborhood of $X$ and constant $l$ be get from Proposition \ref{Pro:dominatedsplitting} such that for any $Y\in\mathcal{U}_2$ and any periodic orbit $P$ of $Y$, the hyperbolic splitting $N_{Q}=N^s(Q)\oplus N^u(Q)$ along $Q$ is an $l$-dominated splitting. Since $X$ contains an $i$-periodic limit we know that $X$ contains a hyperbolic periodic orbit $Q_X$ of index $i$ from Item 3 of Proposition \ref{Pro:generic}. Then we can take a neighborhood $\mathcal{U}_3$ of $X$ such that for any $Y\in \mathcal{U}_3$, there is a hyperbolic periodic orbit $Q_Y$  (close to $Q_X$) of index $i$ contained in $U$. We will prove that $\mathcal{U}=\mathcal{U}_1\cap\mathcal{U}_2\cap\mathcal{U}_3$, $U$ and $l$ satisfy the request of the lemma.

Let $Y\in \mathcal{U}$ and $P$ be a periodic orbit of $Y$ contained in $U$.  {~Since $X$ is far away from homoclinic tangencies,} by Lemma \ref{Lem:limit-dominated}, we just need to prove that $P$ is contained in an $i$-periodic limit of $Y$, {~then we will know that $P$ admits an $l$-dominated splitting of index $i$ in the normal bundle w.r.t. the linear Poincar\'e flow although the index of $P$ may not be $i$}. After a perturbation, we can assume $P$ is a hyperbolic periodic orbit for $Y_1$ where $Y_1$ is arbitrarily close to $Y$. After another arbitrarily small perturbation, we can find $Z\in\mathcal{R}$ arbitrarily close to $Y_1$. By Item 2 of Proposition \ref{Pro:generic} we know that $P_Z\subset \Lambda_Z=H(Q_Z, U)$ where $P_Z,Q_Z$ is the continuation of $P,Q_X$ w.r.t. $Z$, then we know that for any $\varepsilon>0$, there exists a periodic orbit $Q'$ of $Z$ of index $i$ such that $P_Z\subset B(Q',\varepsilon)$. Hence we can find a sequence of $Z_n$ with periodic orbits $Q'_n$ of $Z_n$ such that $Z_n\to Y$ and $P$ is contained in the Hausdorff limit of $Q'_n$. This proves that $P$ is contained in an $i$-periodic limit of $Y$. This ends the proof of the lemma.
\end{proof}

\section{The singularities in robustly transitive sets}

In this section, we will focus on the singularities in an isolated transitive set with dominated splittings for the linear Poincar\'e flow. At the end of this section, we prove Theorem \ref{Thm:uniqueindex}.

\begin{Lemma}\label{Lem:splittingonsingularity}

Let $\Lambda$ be a non-trivial transitive set. Assume that $\Lambda$ contains a
hyperbolic singularity $\sigma$ with hyperbolic splitting $E^s_\sigma\oplus E^u_\sigma$, and
\begin{itemize}

\item $\Lambda\setminus Sing(X)$ admits a dominated splitting
of index $i$ in the normal bundle w.r.t. the linear Poincar\'e flow
$\psi_t^X$;

\item ${\rm Ind}(\sigma)>i$.

\end{itemize}

Then $T_\sigma M$ admits a dominated splitting $T_\sigma
M=E^{ss}_\sigma\oplus E_\sigma^{cs}\oplus E_\sigma^u$ w.r.t. the tangent flow $\Phi_t^X$,
where $E_\sigma^{ss}$ is strongly contracting and $\dim E_\sigma^{ss}=i$.

\end{Lemma}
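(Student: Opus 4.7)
}
The plan is to lift everything to the extended linear Poincar\'e flow at $\sigma$ and then pull the resulting information back to $T_\sigma M$. First, since $\Lambda$ is non-trivial transitive and contains the hyperbolic singularity $\sigma$, Lemma~\ref{Lem:nonemptyonspace} gives a unit vector $e\in\widetilde\Lambda\cap E^u_\sigma$, so $\rho(e)=\sigma$. By Proposition~\ref{Pro:extendedLPF}, the given index-$i$ dominated splitting of $\psi_t^X$ on $N_{\Lambda\setminus{\rm Sing}(X)}$ extends to an index-$i$ dominated splitting $N_{\widetilde\Lambda}=N^s\oplus N^u$ for $\widetilde\psi_t$. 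At the chosen $e$ this yields $N_e=N^s(e)\oplus N^u(e)$ with $\dim N^s(e)=i$.

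Next, I build a second, ``hyperbolic'' splitting of $N_e$ coming from $T_\sigma M=E^s_\sigma\oplus E^u_\sigma$. Since $e\in E^u_\sigma$, the orthogonal projection $\pi_e:T_\sigma M\to N_e$ is injective on $E^s_\sigma$ (because $E^s_\sigma\cap\mathbb{R} e=0$) and has kernel $\mathbb{R} e$ on $E^u_\sigma$. Counting dimensions gives the direct sum decomposition
$$N_e=\pi_e(E^s_\sigma)\oplus\pi_e(E^u_\sigma),\qquad \dim\pi_e(E^s_\sigma)={\rm Ind}(\sigma).$$
A direct computation, using that $\Phi_t(e)$ is parallel to $\Phi^\#_t(e)$ and therefore $\pi_{\Phi^\#_t(e)}(\Phi_t(e))=0$, yields
$$\widetilde\psi_t(\pi_e(v))=\pi_{\Phi^\#_t(e)}(\Phi_t(v))\qquad\text{for }v\in E^s_\sigma\cup E^u_\sigma,$$
so the decomposition is $\widetilde\psi_t$-equivariant along the orbit of $e$. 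Via the bounded isomorphism $\pi_e|_{E^s_\sigma}:E^s_\sigma\to\pi_e(E^s_\sigma)$, the restriction $\widetilde\psi_t|_{\pi_e(E^s_\sigma)}$ is conjugate (up to uniformly bounded distortion) to the linear flow $\Phi_t|_{E^s_\sigma}=e^{t\,DX(\sigma)|_{E^s_\sigma}}$, hence contracts exponentially; an analogous argument shows that $\widetilde\psi_t|_{\pi_e(E^u_\sigma)}$ is conjugate to the action of $\Phi_t$ on $E^u_\sigma/\mathbb{R} e$, whose Lyapunov exponents form a subset of $\{\operatorname{Re}(\eta_j)\}_{j=1}^n$ and are therefore all positive. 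Thus $\pi_e(E^s_\sigma)\oplus\pi_e(E^u_\sigma)$ is a dominated (in fact hyperbolic) splitting of $N_e$ of index ${\rm Ind}(\sigma)$ along the orbit of $e$.

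I now invoke the standard uniqueness-and-ordering property of dominated splittings: two $\widetilde\psi_t$-invariant dominated splittings of $N_e$ along the same orbit, of indices $i<{\rm Ind}(\sigma)$, must nest, so $N^s(e)\subset\pi_e(E^s_\sigma)$. Set
$$E^{ss}_\sigma:=\bigl(\pi_e|_{E^s_\sigma}\bigr)^{-1}\bigl(N^s(e)\bigr)\subset E^s_\sigma.$$
This is an $i$-dimensional subspace. Transferring the dominated splitting $N^s(e)\oplus\bigl(\pi_e(E^s_\sigma)\cap N^u(e)\bigr)$ of $\pi_e(E^s_\sigma)$ back through the conjugation, $E^{ss}_\sigma$ inherits the role of the ``most contracting'' $i$-dimensional subspace inside the dominated splitting of $E^s_\sigma$ under $\Phi_t|_{E^s_\sigma}$. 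Because a linear flow on $E^s_\sigma$ admits at most one dominated splitting of any given index (it is given by the ordering of real parts of eigenvalues of $DX(\sigma)|_{E^s_\sigma}$), $E^{ss}_\sigma$ is independent of the choice of $e$, is $\Phi_t$-invariant, coincides with the sum of the generalized eigenspaces for the $i$ most contracting eigenvalues, and is strongly contracting. Letting $E^{cs}_\sigma$ be the $\Phi_t$-invariant complement of $E^{ss}_\sigma$ in $E^s_\sigma$, the splitting $T_\sigma M=E^{ss}_\sigma\oplus E^{cs}_\sigma\oplus E^u_\sigma$ is dominated: $E^{ss}_\sigma\prec E^{cs}_\sigma$ by construction, and $E^{ss}_\sigma,E^{cs}_\sigma\prec E^u_\sigma$ from hyperbolicity of $\sigma$.

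The main technical obstacle will be verifying that $\pi_e(E^s_\sigma)\oplus\pi_e(E^u_\sigma)$ is genuinely a dominated splitting for $\widetilde\psi_t$ along the orbit of $e$, since when $e$ is not an eigenvector of $DX(\sigma)|_{E^u_\sigma}$ the curve $\Phi^\#_t(e)$ moves nontrivially on the unit sphere of $E^u_\sigma$ and one must obtain uniform control of angles in order to extract uniform exponential rates; identifying $\widetilde\psi_t$ on $\pi_e(E^u_\sigma)$ with the quotient cocycle of $\Phi_t|_{E^u_\sigma}$ modulo $\mathbb{R}\,\Phi^\#_t(e)$ is what makes this control available. The other delicate point is the ordering of dominated splittings along a single orbit with two distinct indices, which I will appeal to as a standard fact.
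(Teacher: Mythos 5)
Your proposal is correct and rests on the same core ingredients as the paper's proof: pass to the extended linear Poincar\'e flow via Proposition~\ref{Pro:extendedLPF}, pick $e\in\tilde\Lambda\cap E^u_\sigma$ via Lemma~\ref{Lem:nonemptyonspace}, define $E^{ss}_\sigma$ as a preimage under the orthogonal projection $\pi_e$, and exploit the uniformly bounded distortion of $\pi_{\Phi^\#_t(e)}|_{E^s_\sigma}$ coming from the bounded angle between $E^s_\sigma$ and the positive orbit closure of $e$ inside $E^u_\sigma$. The difference is in how the domination is established. The paper defines $E^{ss}_\sigma=\pi_e^{-1}(N^{cs}(e))\cap E^s_\sigma$ and $E^{cs}_\sigma=\pi_e^{-1}(N^{cu}(e))\cap E^s_\sigma$ directly and then proves $E^{ss}_\sigma\prec E^{cs}_\sigma$ by a short explicit estimate: for unit $u_{ss}\in E^{ss}_\sigma$ and $u_{cs}\in E^{cs}_\sigma$, writing $\Phi_t(u)=\pi_{\Phi^\#_t(e)}^{-1}(\tilde\psi_t(\pi_e(u)))$ and inserting the bound $\|\pi_{\Phi^\#_t(e)}^{-1}|_{N_{\Phi^\#_t(e)}\cap\pi(E^s_\sigma)}\|\le K$ gives $\|\Phi_t(u_{ss})\|/\|\Phi_t(u_{cs})\|\le K^2Ce^{-\lambda t}$. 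You instead introduce a second dominated splitting $N_e=\pi_e(E^s_\sigma)\oplus\pi_e(E^u_\sigma)$ of index ${\rm Ind}(\sigma)$ along $\orb(e)$, invoke the standard nesting of dominated splittings of different indices to get $N^s(e)\subset\pi_e(E^s_\sigma)$, and then transfer the restricted splitting of $\pi_e(E^s_\sigma)$ back to $E^s_\sigma$ via the bounded conjugation. Your route has the advantage of making explicit why $\dim E^{ss}_\sigma=i$ and why $E^{ss}_\sigma\oplus E^{cs}_\sigma=E^s_\sigma$ (the paper asserts these via the identity $\pi_e(E^{ss}_\sigma)=N^{cs}(e)$ with the inclusion $N^{cs}(e)\subset\pi_e(E^s_\sigma)$ left implicit, which is exactly the nesting step). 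The cost is the extra work of verifying that $\pi_e(E^s_\sigma)\oplus\pi_e(E^u_\sigma)$ is a dominated splitting for $\tilde\psi_t$ along $\overline{\orb(e)}$, the ``main technical obstacle'' you flag yourself; this is true by the same bounded-distortion argument, but it is a nontrivial auxiliary step that the paper avoids. Both proofs ultimately use uniqueness of dominated splittings of the linear map $\Phi_1|_{E^s_\sigma}$ to get invariance and independence of $e$; your identification of $E^{ss}_\sigma$ with the sum of generalized eigenspaces for the $i$ slowest stable eigenvalues makes this point sharper than the paper does.

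One small inaccuracy worth fixing: the claim that the Lyapunov exponents of $\tilde\psi_t|_{\pi_e(E^u_\sigma)}$ ``form a subset of $\{\operatorname{Re}(\eta_j)\}$'' is not literally correct when $e$ is not an eigenvector, since the quotient cocycle $E^u_\sigma/\mathbb{R}\Phi^\#_t(e)$ is non-autonomous and its forward and backward Lyapunov exponents may differ. What one does get, and what you actually need, is that these exponents are bounded below by $\min_j\operatorname{Re}(\eta_j)>0$ uniformly, because the line $\mathbb{R}\Phi^\#_t(e)$ stays inside $E^u_\sigma$ at bounded angle from $E^s_\sigma$; this is enough to conclude that $\pi_e(E^u_\sigma)$ dominates $\pi_e(E^s_\sigma)$, which is all the nesting argument requires.
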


\begin{proof}
Assume $\Lambda\setminus Sing(X)$ admits a dominated splitting
$N_{\Lambda\setminus Sing(X)}M=N^{cs}\oplus N^{cu}$ with $\dim(N^{cs})=i$ w.r.t. the linear Poincar\'e flow $\psi^X_t$. Then we
can extend the dominated splitting to be
$N_{\tilde{\Lambda}}=N^{cs}(\tilde{\Lambda})\oplus
N^{cu}({\tilde{\Lambda}})$ w.r.t. the extended linear Poincar\'e flow $\tilde\psi_t$ as in Proposition~\ref{Pro:extendedLPF}.

By Lemma \ref{Lem:nonemptyonspace} we know that there exists an element $e\in\tilde\Lambda\cap E_\sigma^u$. Denote by $\pi_e$ the orthogonal projection from $T_\sigma M$ to $N_e$. Let $E_\sigma^{ss}=\pi_e^{-1}(N^{cs}(e))\cap E_\sigma^s$ and $E_\sigma^{cs}=\pi_e^{-1}(N^{cu}(e))\cap E^s_\sigma$. We are going to prove  that $E_\sigma^{ss}$ and $E_\sigma^{cs}$ are independent of the choice of $e$. 

Since $e\in E_\sigma^u$, we know that $\pi_e(E^s_{\sigma})$ is isomorphic to $E^s_\sigma$, then $E_\sigma^{ss}\oplus E_\sigma^{cs}=E_\sigma^s$. Since $\pi_e(E_\sigma^{ss})=N^{cs}(e)$, we have that $\dim(E_\sigma^{ss})=i$. Note that $\Phi_t(\pi_e^{-1}(N^{cs}(e))=\pi^{-1}_{\Phi_t^\#(e)}(N^{cs}(\Phi_t^\#(e)))$. Hence $\Phi_t(E_\sigma^{ss})=\pi_{\Phi_t^\#(e)}^{-1}(N^{cs}(\Phi_t^\#(e)))\cap E_\sigma^s$ by the fact that $E_\sigma^s$ is invariant for $\Phi_t$. Similarly, we have $\Phi_t(E_\sigma^{cs})=\pi_{\Phi_t^\#(e)}^{-1}(N^{cu}(\Phi_t^\#(e)))\cap E_\sigma^s$.

Now we proceed to prove that $E_\sigma^{ss}\oplus E_\sigma^{cs}$ is a dominated splitting with respect to tangent flow $\Phi_t$. Since $N^{cs}\oplus N^{cu}$ is a dominated splitting w.r.t. the extended linear Poincar\'e flow $\tilde\psi_t$, there exist
$C,\lambda>0$ such that for any unit vectors $u\in N^{cs}(e)$ and
$v\in N^{cu}(e)$,
$$\frac{|\tilde{\psi}_t(u)|}{|\tilde{\psi}_t(v)|}<Ce^{-\lambda t}$$
for any $t>0$. Note that $\Phi^\#_t(e)\in E_\sigma^u$ for any $t\in\mathbb{R}$, hence the angle between $\Phi_t^\#(e)$ and $E^s_\sigma$ has a positive lower bound. Thus there is a constant $K>1$ depending on the angle between $E_\sigma^s$ and $E^u_\sigma$
such that the orthogonal projection
$\pi_{\Phi_t^\#(e)}:E^s_\sigma\to N_{\Phi_t^\#(e)}$ has norm
$\|\pi^{-1}_{\Phi_t^\#(e)}\|\leq K$ for all $t\in\mathbb{R}$. Then for any unit vectors $u_{ss}\in E_\sigma^{ss}$ and $u_{cs}\in E_\sigma^{cs}$, we have
$$\frac{|\Phi_t(u_{ss})|}{|\Phi_t(u_{cs})|}=\frac{|\pi_{\Phi_t^\#(e)}^{-1}(\psi_t(\pi_e(u_{ss})))|}{|\pi_{\Phi_t^\#(e)}^{-1}(\psi_t(\pi_e(u_{cs})))|}\leq K\frac{|\psi_t(\pi_e(u_{ss}))|}{|\psi_t(\pi_e(u_{cs}))|}$$
\begin{equation}\ \ \ \ \ \ \ \ \ \ \ \ \ \ \ \ \ \ \ \ <KCe^{-\lambda t}\frac{|\pi_e(u_{ss})|}{|\pi_e(u_{cs})|}\leq KCe^{-\lambda t}\frac{1}{|\pi_e(u_{cs})|}<K^2Ce^{-\lambda t}\end{equation}
for all $t>0$. Similarly, we have
\begin{equation}\frac{|\Phi_{-t}(u_{cs})|}{|\Phi_{-t}(u_{ss})|}<K^2Ce^{-\lambda t}\end{equation}
for all $t>0$.
% Hence $E^s_\sigma=E^{ss}_\sigma\oplus E^{cs}_\sigma$ is a dominated splitting. Replace $e$ by $\Phi_t^\#(e)$, the same arguments show that $E_\sigma^s=\Phi_t(E_\sigma^{ss})\oplus \Phi_t(E_\sigma^{cs})$ is also a dominated splitting.
%From the uniqueness of dominated splitting (see \cite[Lemma 2.3]{LGW} for instance) we know that $\Phi_t(E_\sigma^{ss})=E_\sigma^{ss}$ and $\Phi_t(E^{cs}_\sigma)=E^{cs}_\sigma$.
Hence $E_\sigma^s=E_\sigma^{ss}\oplus E_\sigma^{cs}$ is an invariant splitting and then is a dominated splitting from (1) and (2). From the uniqueness of dominated splitting, we can see that the splitting is independent of the choice of $e$. This ends the proof of Lemma~\ref{Lem:splittingonsingularity}.
\end{proof}

In the following lemmas we choose $\mathcal{R}$ to be the residual set given in Proposition \ref{Pro:generic} and Lemma \ref{Lem:splittingonperiodic}. As a direct consequence of Lemma~\ref{Lem:splittingonsingularity}, we have the following lemma.
\begin{Lemma}\label{Lem:splittingonsingularity2}
Let $X\in({\cal X}^1(M)\setminus\overline{HT})\cap\mathcal{R}$ and
$\Lambda$ be an isolated transitive set of $X$.  Assume that $\Lambda$ contains
an $i$-periodic limit. If $\sigma$ is a hyperbolic singularity in $\Lambda$, then we have the following cases:
\begin{itemize}
\item when ${\rm Ind}(\sigma)>i$, there is a dominated splitting $T_\sigma M=E^{ss}_\sigma\oplus E_\sigma^{cs}\oplus E^u_\sigma$ for the tangent flow with $\dim E^{ss}_\sigma=i$ and $E_\sigma^u$ is the unstable space of $\sigma$.
\item when ${\rm Ind}(\sigma)\le i$, there is a dominated splitting $T_\sigma M=E^{s}_\sigma\oplus E_\sigma^{cu}\oplus E^{uu}_\sigma$ for the tangent flow with $\dim E^{uu}_\sigma=\dim M-1-i$ and $E_\sigma^s$ is the stable space of $\sigma$.
\end{itemize}
\end{Lemma}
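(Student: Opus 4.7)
My plan is to handle the case $\mathrm{Ind}(\sigma)>i$ directly by verifying the hypotheses of Lemma~\ref{Lem:splittingonsingularity}, and then deduce the second case $\mathrm{Ind}(\sigma)\le i$ by time reversal. For the reversal, I use that a periodic orbit of index $i$ for $X$ has index $\dim M-1-i$ for $-X$, and that $\mathrm{Ind}_{-X}(\sigma)=\dim M-\mathrm{Ind}_X(\sigma)\ge \dim M-i>\dim M-1-i$; so with $j=\dim M-1-i$ we are exactly in the first case for $-X$. Unwinding the time reversal sends $E^{ss}_{-X}$ to $E^{uu}_X$, $E^{cs}_{-X}$ to $E^{cu}_X$ and $E^{u}_{-X}$ to $E^s_X$, giving the desired splitting with $\dim E^{uu}_\sigma=\dim M-1-i$.

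The substantive task for the first case is to check that $\Lambda\setminus\mathrm{Sing}(X)$ admits a dominated splitting of index $i$ for the linear Poincar\'e flow $\psi^X_t$. I would obtain this from Lemma~\ref{Lem:splittingonperiodic}: since $X\in\mathcal{R}\setminus\overline{HT}$ and $\Lambda$ is an isolated transitive set containing an $i$-periodic limit, there exist a $C^1$ neighborhood $\mathcal{U}$ of $X$, an isolating neighborhood $U$ of $\Lambda$ and $l>0$ such that every periodic orbit of every $Y\in\mathcal{U}$ lying in $U$ carries an $l$-dominated splitting of index $i$ for $\psi^Y_t$.

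Next, I would lift this uniform splitting to $\tilde\Lambda$ by the same compactification argument as in Lemma~\ref{Lem:limit-dominated}. Using $\mathcal{U}$ and $U$ from the previous step, form the set $B(\Lambda)\subset SM$. By Lemma~\ref{Lem:transitive-periodic-likt} and the proof of Lemma~\ref{Lem:wildecontainedinB}, $\tilde\Lambda\subset B(\Lambda)$. For each $e\in B(\Lambda)$ pick $Y_n\to X$ in $\mathcal{U}$ and periodic points $p_n\in U$ of $Y_n$ with $Y_n(p_n)/\|Y_n(p_n)\|\to e$. Each orbit $P_n$ has an $l$-dominated splitting $N^{cs}(P_n)\oplus N^{cu}(P_n)$ of index $i$ for $\psi^{Y_n}_t$ with uniform constant $l$; by continuity of $\tilde\psi_t$ and compactness of Grassmannians, extracting limits and using the uniqueness of dominated splittings yields a $\tilde\psi^X_t$-invariant, $l$-dominated splitting $N^{cs}\oplus N^{cu}$ of index $i$ on $B(\Lambda)$. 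Restricting to $\tilde\Lambda$ and projecting by Proposition~\ref{Pro:extendedLPF} produces the required dominated splitting on $\Lambda\setminus\mathrm{Sing}(X)$. Then Lemma~\ref{Lem:splittingonsingularity} applied to $\sigma$ with $\mathrm{Ind}(\sigma)>i$ delivers $T_\sigma M=E^{ss}_\sigma\oplus E^{cs}_\sigma\oplus E^u_\sigma$ with $\dim E^{ss}_\sigma=i$.

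I expect the main obstacle to be this passage from the uniform per-periodic-orbit splitting to a splitting on all of $\Lambda\setminus\mathrm{Sing}(X)$, because $\Lambda$ may well contain singularities where the linear Poincar\'e flow is not defined, and the periodic orbits approximating points of $\Lambda$ need not themselves have index $i$. Both difficulties are circumvented by working on the unit tangent bundle compactification $B(\Lambda)$, where the extended linear Poincar\'e flow is defined uniformly and where Lemma~\ref{Lem:splittingonperiodic} gives the $l$-dominated splitting for \emph{every} periodic orbit in $U$ irrespective of its index. After that, everything is a direct application of Lemma~\ref{Lem:splittingonsingularity} and time reversal.
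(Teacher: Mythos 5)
Your proof is correct and at the structural level it is the same as the paper's: both reduce to producing a dominated splitting of index $i$ for the linear Poincar\'e flow on $\Lambda\setminus\mathrm{Sing}(X)$, apply Lemma~\ref{Lem:splittingonsingularity} to $\sigma$, and obtain the case $\mathrm{Ind}(\sigma)\le i$ by time reversal (which the paper abbreviates as ``symmetrically''). The one place where you diverge is in producing the dominated splitting: you route through Lemma~\ref{Lem:splittingonperiodic} to get a uniform $l$-dominated splitting of index $i$ on \emph{all} nearby periodic orbits, then redo the compactification/limit argument on $B(\Lambda)$; the paper instead uses Items~2 and 3 of Proposition~\ref{Pro:generic} to upgrade ``$\Lambda$ contains an $i$-periodic limit'' to ``$\Lambda$ \emph{is} an $i$-periodic limit'' and invokes Lemma~\ref{Lem:limit-dominated} directly as a black box. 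Your route is longer (and note that Lemma~\ref{Lem:splittingonperiodic}'s proof itself invokes Lemma~\ref{Lem:limit-dominated}), but it is valid; two small bookkeeping points worth tightening are that Lemma~\ref{Lem:wildecontainedinB} is stated for robustly transitive sets (you correctly observe that its proof, via Lemma~\ref{Lem:transitive-periodic-likt}, only needs transitivity plus isolation), and that Proposition~\ref{Pro:extendedLPF} goes from $\psi_t$ on $\Lambda\setminus\mathrm{Sing}(X)$ to $\tilde\psi_t$ on $\tilde\Lambda$, whereas the passage you actually want in that last step is the trivial restriction/identification of $\tilde\psi_t$ with $\psi_t$ over regular directions.
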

\begin{proof}
Since $\Lambda$ contains an $i$-periodic limit, from Item 3 of Proposition \ref{Pro:generic} we know that $\Lambda$ contains a periodic orbit $P$ of index $i$ and then $\Lambda$ is the relative homoclinic class of $P$ by the Item 2 of Proposition \ref{Pro:generic}. Hence $\Lambda$ is an $i$-periodic limit, then by Lemma \ref{Lem:limit-dominated} we know that there is a dominated splitting of index $i$ on $\Lambda\setminus Sing(X)$ with respect to the linear Poincar\'e flow. From Lemma \ref{Lem:splittingonsingularity}, for any given hyperbolic singularity $\sigma\in\Lambda$ with ${\rm Ind}(\sigma)>i$, we know that there is a dominated splitting $E_\sigma^{ss}\oplus E_\sigma^{cs}\oplus E_\sigma^u$ with respect to the tangent flow $\Phi_t$ with $\dim E_\sigma^{ss}=i$ and $E^{ss}_\sigma\oplus E^{cs}_\sigma$ is the stable space of $\sigma$. Symmetrically, for any hyperbolic singularity $\sigma\in\Lambda$ with ${\rm Ind}(\sigma)\le i$, there is a dominated splitting $E_\sigma^{s}\oplus E_\sigma^{cu}\oplus E_\sigma^{uu}$ with respect to tangent flow $\Phi_t$ with $\dim E_\sigma^{uu}=\dim M-1-i$.
\end{proof}

\begin{Lemma}\label{Lem:stableoutside}

Let $X\in{\cal X}^1(M)$. Assume that $\Lambda$ is a non-trivial transitive set
and contains a singularity $\sigma$. Suppose that we have the following properties:
\begin{itemize}
\item $T_\sigma M=E_\sigma^{ss}\oplus E_\sigma^c\oplus E_\sigma^{uu}$ is a partially
hyperbolic splitting on $\sigma$ w.r.t. the tangent flow,

\item There are $i\in[\dim E_\sigma^{ss}, \dim E_\sigma^{ss}\oplus E_\sigma^c-1]$,
$l>0$, and a $C^1$-neighborhood ${\cal U}$ of $X$ such that
every periodic orbit of $Y$ close to $\Lambda$ admits an
$l$-dominated splitting of index $i$ w.r.t. the linear
Poincar\'e flow $\psi_t^Y$.

\item For the corresponding strong unstable manifold
$W^{uu}(\sigma)$, one has
$W^{uu}(\sigma)\cap\Lambda\setminus\{\sigma\}\neq\emptyset$.

\end{itemize}

Then $W^{ss}(\sigma)\cap\Lambda=\{\sigma\}$.

\end{Lemma}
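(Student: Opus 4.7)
The strategy is proof by contradiction. Assume $y\in W^{ss}(\sigma)\cap\Lambda\setminus\{\sigma\}$; combined with the given $z\in W^{uu}(\sigma)\cap\Lambda\setminus\{\sigma\}$, the plan is to produce a sequence of periodic orbits whose dominated splittings of index $i$ cannot be reconciled with the partial hyperbolic structure of $\sigma$.

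For the construction, apply Lemma~\ref{Lem:connectinglemma} at regular points on the positive orbit of $z$ and on the negative orbit of $y$ to perturb $X$ to some $Y\in\mathcal{U}$ having a homoclinic orbit $\Gamma$ of $\sigma_Y$, whose regular part approaches $\sigma_Y$ forward along $W^{ss}(\sigma_Y)$ (following the orbit of $y$) and backward along $W^{uu}(\sigma_Y)$ (following the orbit of $z$). After possibly reversing time, use Lemma~\ref{Lem:saddlevalueperturbation} in a small neighborhood of $\sigma_Y$ disjoint from $\Gamma\setminus\{\sigma_Y\}$ to ensure $I(\sigma_Y,Y)<0$, then apply Proposition~\ref{Pro:Shilnikov} to obtain $X_n\to Y$ with periodic orbits $P_n$ of $X_n$ accumulating on $\Gamma$ in the Hausdorff metric. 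For $n$ large, $X_n\in\mathcal{U}$ and $P_n$ lies in the relevant open set, so by hypothesis each $P_n$ admits an $l$-dominated splitting of index $i$ on its normal bundle with respect to $\psi_t^{X_n}$.

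Passing to the limit as in the proof of Lemma~\ref{Lem:limit-dominated}, these splittings produce an $l$-dominated splitting $N^{cs}\oplus N^{cu}$ of index $i$ for the extended linear Poincar\'e flow $\tilde\psi_t^X$ on the compact $\Phi_t^{\#}$-invariant set $\tilde\Gamma$ obtained by closing the lift of $\Gamma\setminus\{\sigma\}$ in $SM$. The set $\tilde\Gamma$ contains two $\Phi_t^{\#}$-fixed directions: the forward limit $e_+\in E_\sigma^{ss}$ of $X(\varphi_t(y))/\|X(\varphi_t(y))\|$ as $t\to+\infty$, and the backward limit $e_-\in E_\sigma^{uu}$ of $X(\varphi_t(z))/\|X(\varphi_t(z))\|$ as $t\to-\infty$.

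The contradiction comes from analyzing the splittings at $e_\pm$. On $N_{e_\pm}$ the flow $\tilde\psi_t$ acts as $\pi_{e_\pm}\circ\Phi_t$, and the $\Phi_t$-invariance of $E_\sigma^{ss}$, $E_\sigma^c$, $E_\sigma^{uu}$ yields $\tilde\psi_t$-invariant decompositions of $N_{e_\pm}$ with a clear separation of Lyapunov rates between the three pieces. The assumption $i\in[\dim E_\sigma^{ss},\dim(E_\sigma^{ss}\oplus E_\sigma^c)-1]$, together with the uniqueness of dominated splittings, forces $N^{cs}(e_\pm)=\pi_{e_\pm}(E_\sigma^{ss}\oplus V_\pm)$ for uniquely determined $\Phi_t$-invariant subspaces $V_\pm\subset E_\sigma^c$ with $\dim V_+=i-\dim E_\sigma^{ss}+1$ and $\dim V_-=i-\dim E_\sigma^{ss}$. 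Extending $V_\pm$ along the regular part of $\Gamma$ via the $\Phi_t$-invariance of $E_\sigma^c$ and projecting to the normal bundle produces $\tilde\psi_t$-invariant subbundles of $N^{cs}$ whose ranks at the two ends $e_+$ and $e_-$ differ by $1$, which is impossible since $\tilde\psi_t$ is a linear isomorphism between normal fibres and preserves the dimension of invariant subbundles along any $\Phi_t^{\#}$-orbit. The main obstacle is making this rigorous: one needs to carefully extend the projections $\pi(E_\sigma^{ss})$, $\pi(E_\sigma^c)$, $\pi(E_\sigma^{uu})$ from a neighborhood of $\sigma$ along the homoclinic loop and identify the corresponding $\tilde\psi_t$-invariant subbundles of $N^{cs}$, since these naive projections have rank drops precisely at the singular limits $e_\pm$ where the geometric degeneracy of the extended linear Poincar\'e flow occurs.
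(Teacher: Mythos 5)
Your setup matches the paper's up to the construction of $\tilde\Gamma$ and the dominated splitting $N^{cs}\oplus N^{cu}$ of index $i$ on it, but the concluding dimension-count argument does not close, and the gap is exactly the thing you flag as ``the main obstacle.''

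The proposed contradiction is that the two center pieces $V_+\subset E_\sigma^c$ (with $\dim V_+ = i-\dim E_\sigma^{ss}+1$, at $e_+\in E_\sigma^{ss}$) and $V_-\subset E_\sigma^c$ (with $\dim V_- = i-\dim E_\sigma^{ss}$, at $e_-\in E_\sigma^{uu}$) must be related by following the flow along the regular part of $\Gamma$, hence have the same dimension. This is not a contradiction. First, $\dim N^{cs}(e_+)=\dim N^{cs}(e_-)=i$ -- the one-dimensional drop in $\dim\pi_{e_\pm}(E_\sigma^{ss})$ as you move the base direction from $E^{uu}_\sigma$ to $E^{ss}_\sigma$ is exactly compensated by picking up one more center dimension, so the rank of $N^{cs}$ is constant, as it must be. Second, there is no canonical $\tilde\psi_t$-invariant subbundle of $N^{cs}$ over $\tilde\Gamma$ that restricts to $\pi_{e_\pm}(V_\pm)$ at the two singular directions: the decomposition $E^{ss}_\sigma\oplus E^c_\sigma\oplus E^{uu}_\sigma$ lives only at $\sigma$, and any continuous extension of its projected image along a regular orbit of $\Gamma$ will generally mix the pieces. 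Third, $e_+$ and $e_-$ are in general not single points (the $\omega$- and $\alpha$-limit sets of the lifted orbit inside $E_\sigma^{ss}$ and $E_\sigma^{uu}$ can be nontrivial), and they are not on the same $\Phi_t^\#$-orbit, so ``$\tilde\psi_t$ is an isomorphism along an orbit'' does not tie them together. You have correctly identified the obstruction but have not supplied an idea to get around it, and the dimension count alone cannot.

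The paper's proof avoids this entirely by exhibiting a single explicit vector that violates domination. It requires a little more from the perturbation: after producing the strong homoclinic connection $\Gamma\subset W^{ss}\cap W^{uu}$ and linearizing near $\sigma$, one arranges (as in \cite[Lemma 4.3]{LGW}, not by a bare Shilnikov bifurcation) that the periodic orbits $P_n$ pass through $\exp_\sigma\bigl((E^{ss}_\sigma\oplus E^{uu}_\sigma)(\delta)\bigr)$. One then picks the ``equator'' point $p_n\in P_n$ where $X_n(p_n)/\|X_n(p_n)\|=v_n^{ss}+v_n^{uu}$ with $\|v_n^{ss}\|=\|v_n^{uu}\|$; the limit direction $e=v^{ss}+v^{uu}\in E^{ss}_\sigma\oplus E^{uu}_\sigma$ lies in $\tilde\Gamma$, and the key auxiliary vector $u=v^{ss}-v^{uu}\in N_e$ satisfies $\tilde\psi_t(u)/\|\tilde\psi_t(u)\|\to E^{ss}_\sigma$ as $t\to+\infty$ and $\to E^{uu}_\sigma$ as $t\to-\infty$ -- that is, $u$ asymptotes to the ``wrong'' bundle in each time direction. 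Combining this with the Claim ($E^{uu}_\sigma\subset N^{cu}$ over $\tilde\Gamma\cap E^{ss}_\sigma$ and $E^{ss}_\sigma\subset N^{cs}$ over $\tilde\Gamma\cap E^{uu}_\sigma$) yields a contradiction whether $u\in N^{cs}(e)$ or $u\notin N^{cs}(e)$. To repair your proof you would need to import this single-vector mechanism (or some other concrete violation of domination along $\tilde\Gamma$); the rank-tracking idea as stated does not produce one.
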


\begin{proof}
The proof is very similar to the proof of \cite[Lemma 4.3]{LGW}. For the completeness, we give the idea here. Without loss of generality, we assume that $E_\sigma^{ss}, E_\sigma^c, E_\sigma^{uu}$ are
mutually orthogonal. Suppose on the contrary that
$(W^{ss}(\sigma)\cap\Lambda)\setminus\{\sigma\}\neq \emptyset$. By
using the Connecting Lemma (Lemma~\ref{Lem:connectinglemma}), we can take a vector field $Y$ which is
arbitrarily close to $X$ such that $Y$ admits a strong homoclinic
connection $\Gamma\subset W^{ss}(\sigma)\cap W^{uu}(\sigma)$. By
another small perturbation, we can assume that the dynamics near
$\sigma$ is linear, then we have, the local strong stable manifold
$W^{ss}(\sigma)=\exp (E_\sigma^{ss}(\delta)$) and the local strong unstable
manifold $W^{uu}(\sigma)=\exp (E_\sigma^{uu}(\delta))$ for some small
$\delta$, where $E_\sigma^i(\delta)$ denote the disc in $E_\sigma^i$ with radius $\delta$ for $i=ss,uu$. Then we can get a vector field $Z$ arbitrarily close to
$Y$ with the following properties: $(a)$. $Z=Y$ in a neighborhood of $\sigma$,
$(b).$ $Z$ has a periodic orbit $P$ arbitrarily close to $\Gamma$
and $P$ has a segment located in $\exp(E^{ss}\oplus E^{uu})(\delta)$
for arbitrarily small $\delta$ (See the proof of \cite[Lemma 4.3]{LGW} for the details of
the perturbations).

Let $X_n\to X$ be a sequence of the perturbations given by the above
way and $P_n$ be the periodic orbit of $X_n$. Let $\Gamma$ be the
Hausdorff limit of $P_n$. From the second assumption of the Lemma,
we have an $l$-dominated splitting along the periodic orbit
$P_n$ with respect to the linear Poincar\'e flow of index $i$. Hence
there is a dominated splitting of index $i$ on $\Gamma\setminus Sing(X)$
with respect to the linear Poincar\'e flow and therefore there is a dominated splitting of index $i$ on $\tilde{\Gamma}$ for the extended linear
Poincar\'e flow. Denote by $N_{\tilde{\Gamma}}=N^{cs}_{\tilde{\Gamma}}\oplus
N^{cu}_{\tilde{\Gamma}}$ the dominated splitting of index $i$ with respect
to the extended linear Poincar\'e flow which generated by the
$l$-dominated splitting along the periodic orbits $P_n$. By the
assumptions of the Lemma, we know that $\dim(N^{cs}(e))\geq \dim(E^{ss})$
and $\dim(N^{cu}(e))\geq \dim(E^{uu})$ for all $e\in\tilde{K}$.

\bigskip

\begin{Claim}We have that
\begin{enumerate}
\item $E_\sigma^{uu}\subset N^{cu}(e)$ for any $e\in\tilde{\Gamma}\cap E_\sigma^{ss}$;
\item $E_\sigma^{ss}\subset N^{cs}(e)$ for any $e\in\tilde{\Gamma}\cap E_\sigma^{uu}$.
\end{enumerate}
\end{Claim}
\begin{proof}[The proof of Claim] We just prove item $1$. The item $2$ can be proved similarly. For any
$e\in\tilde{K}\cap E_\sigma^{ss}$, we have two splitting $N_e=N^{cs}(e)\oplus
N^{cu}(e)$ and $N_e=((E_\sigma^{ss}\oplus E_\sigma^c)\cap N_e)\oplus E_\sigma^{uu}$. Denote by $\Delta_e^{cs}=(E_\sigma^{ss}\oplus E_\sigma^c)\cap N_e$ and $\Delta_e^{cu}=E_\sigma^{uu}$. This gives a $\tilde\psi_t$-invariant splitting $N_{\tilde K\cap E_\sigma^{ss}}=\Delta^{cs}\oplus \Delta^{cu}$ on the $\Phi_t^\#$-invariant set $\tilde \Gamma\cap E_\sigma^{ss}$ (here we use the assumption $E_\sigma^{ss}$ is orthogonal to $E^{uu}$). We will
see that $N_{\tilde \Gamma\cap E^{ss}}=\Delta^{cs}\oplus \Delta^{cu}$ is a
dominated splitting with respect to the extended linear
Poincar\'e flow $\tilde\psi_t=\tilde{\psi}_t^X$. Since $(E_\sigma^{ss}\oplus E_\sigma^{c})\oplus E_\sigma^{uu}$ is a dominated splitting for $\Phi_t$, there exist $C\geq 1$ and $\lambda>0$ such that for any unit vectors $u\in(E_\sigma^{ss}\oplus E_\sigma^{c})$ and $v\in E_\sigma^{uu}$, one has
$$\frac{|\Phi_{t}(u)|}{|\Phi_{t}(v)|}<Ce^{-\lambda t}$$
for all $t>0$. For any $e\in \tilde \Gamma\cap E^{ss}$ and any unit vectors $u\in\Delta_e^{cs}$ and $v\in\Delta_e^{cu}$, we have
$$\frac{|\tilde\psi_{t}(u)|}{|\tilde\psi_{t}(v)|}=\frac{|\tilde\psi_{t}(u)|}{|\Phi_{t}(v)|}\leq\frac{|\Phi_{t}(u)|}{|\Phi_{t}(v)|}<Ce^{-\lambda t} $$
for all $t>0$. This proves that $\Delta^{cs}\oplus \Delta^{cu}$ is a dominated splitting for $\tilde\psi_t$. By the uniqueness of dominated splitting and the fact that $\dim(N^{cu})\geq \dim(\Delta^{cu})$ we have $E_\sigma^{uu}=\Delta_e^{cu}\subset N^{cu}(e)$ for any $e\in \tilde \Gamma\cap E^{ss}$. This ends the proof of Claim.
\end{proof}

Now let us continue the proof of Lemma \ref{Lem:stableoutside}. For every $P_n$, one can choose a point $p_n\in P_n\cap \exp_\sigma(E_\sigma^{ss}\oplus E_\sigma^{uu})$ such that
$\frac{X_n(p_n)}{\|X_n(p_n)\|}=v_n^{ss}+v_n^{uu}$ where $v_n^{ss}\in
E_\sigma^{ss}$, $v_n^{uu}\in E_\sigma^{uu}$ and $|v_n^{ss}|=|v_n^{uu}|$. By
choosing subsequences, we can assume that
$\frac{X_n(p_n)}{\|X_n(p_n)\|}$ accumulate to some point
$e\in\tilde{\Gamma}$. By the choice of $p_n$ we know that $e\in
E^{ss}\oplus E^{uu}$ and $e$ has a splitting $e=v^{ss}+v^{uu}$ such
that $v^{ss}\in E_\sigma^{ss}$, $v^{uu}\in E_\sigma^{uu}$ and $|v^{ss}|=|v^{uu}|$.
Denote by $e_t=\Phi_t^\#(e)$. One has that
$e_t\to E_\sigma^{uu}$ as $t\to +\infty$ and $e_t\to E_\sigma^{ss}$ as $t\to
-\infty$. Let $u=v^{ss}-v^{uu}\in N_e$ and
$u_t=\tilde{\psi}_t^X(u)$. A direct computation shows that
$$u_t=\Phi_t(u)-\langle\Phi_t(u),e_t\rangle e_t=\Phi_t(v^{ss}-v^{uu})-\frac{\|\Phi_t(v^{ss})\|^2-\|\Phi_t(v^{uu})\|^2}{\|\Phi_t(v^{ss})\|^2+\|\Phi_t(v^{uu})\|^2}\Phi_t(v^{ss}+v^{uu})$$
$$=2\frac{\|\Phi_t(v^{uu})\|^2\Phi_t(v^{ss})-\|\Phi_t(v^{ss})\|^2\Phi_t(v^{uu})}{\|\Phi_t(v^{ss})\|^2+\|\Phi_t(v^{uu})\|^2}:=u_t^{ss}+u_t^{uu}$$
where $u_t^{ss}\in E_\sigma^{ss}$ and $u_t^{uu}\in E_\sigma^{uu}$. By the domination, $\|u_t^{uu}\|/\|u_t^{ss}\|\to 0$ as $t\to+\infty$, hence $u_t/\|u_t\|\to E_\sigma^{ss}$ as
$t\to +\infty$. Similarly we have $u_t/\|u_t\|\to E_\sigma^{uu}$ as $t\to -\infty$.

Let $N_e=N^{cs}(e)\oplus N^{cu}(e)$ be the dominated splitting of index $i$
generated by the dominated splitting along $P_n$. Then we have two cases as following.

\paragraph{Case $1$: $u\in N^{cs}(e)$.} In this case we take a sequence $t_n\to-\infty$ such that $e_{t_n}$ converges to some $e'$ and $u_{t_n}/\|u_{t_n}\|$ converges to some $u'$. By the discussion in the previous paragraph, we know that $e'\in \tilde \Gamma\cap E_\sigma^{ss}$ and $u'\in E_\sigma^{uu}$. By the invariance and continuous of subbundle $N^{cs}$ we know $u'\in N^{cs}({e'})$. On the other hand we have $u'\in E_\sigma^{uu}\subset N^{cu}({e'})$ as $e'\in \tilde K\cap E_\sigma^{ss}$ from the claim. We get a contradiction, hence this case can not happen.
\paragraph{Case $2$: $u\notin N^{cs}(e)$.} In this case we take a sequence $t_n\to+\infty$ such that $e_{t_n}$ converges to some $e'$ and $u_{t_n}/\|u_{t_n}\|$ converges to some $u'$. By the discussion in the previous paragraph, we know that $e'\in \tilde \Gamma\cap E_\sigma^{uu}$ and $u'\in E_\sigma^{ss}$. By the fact that $N^{cs}\oplus N^{cu}$ is a dominated splitting and $u\notin N^{cs}(e)$ we know that $u'\in N^{cu}({e'})$. This is also a contradiction with $E_\sigma^{ss}\subset N^{cs}({e'})$ for any $e'\in \tilde\Gamma\cap E_\sigma^{uu}$.

Hence both two cases can not happen. The assumption $W^{ss}(\sigma)\cap\Lambda\setminus\{\sigma\}\neq \emptyset$ can not be true. This ends the proof of Lemma \ref{Lem:stableoutside}.
\end{proof}

Now let us consider a vector field $X\in({\cal X}^1(M)\setminus\overline{HT})\cap\mathcal{R}$. Assume $\Lambda$ is an isolated transitive set of $X$ and $\sigma\in\Lambda$ is a singularity. If $\Lambda$ contains an $i$-periodic limit, then by Lemma \ref{Lem:splittingonsingularity2} we know that if ${\rm Ind}(\sigma)>i$ then there is a partially splitting $T_\sigma M=E_\sigma^{ss}\oplus E_\sigma^{cs}\oplus E_{\sigma}^u$ with $\dim E_\sigma^{ss}=i$, if ${\rm Ind}(\sigma)\leq i$ then there is a partially hyperbolic splitting $T_\sigma M=E^{s}_\sigma\oplus E^{cu}_\sigma\oplus E^{uu}_\sigma$ with $\dim E_\sigma^{uu}=\dim M-1-i$. Denote by $W^{ss}(\sigma)$ and $W^{uu}(\sigma)$ the strong stable and unstable manifolds correspond to $E^{ss}_\sigma$ and $E^{uu}_\sigma$ in these two cases.

\begin{Lemma}\label{Lem:stableoutside2} There is a residual set $\cR$ with the following properties.
Let $X\in({\cal X}^1(M)\setminus\overline{HT})\cap\mathcal{R}$ and
$\Lambda$ be an isolated transitive set of $X$.  Assume that $\Lambda$ contains
an $i$-periodic limit. If $\sigma$ is a hyperbolic singularity in $\Lambda$, then we have the following cases:
\begin{itemize}
\item when ${\rm Ind}(\sigma)>i$, one has $(W^{ss}(\sigma)\setminus\{\sigma\})\cap\Lambda=\emptyset$.
\item when ${\rm Ind}(\sigma)\le i$, one has $(W^{uu}(\sigma)\setminus\{\sigma\})\cap\Lambda=\emptyset$.
\end{itemize}

\end{Lemma}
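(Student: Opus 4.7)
The plan is to deduce the first bullet directly from Lemma~\ref{Lem:stableoutside}, and to reduce the second bullet to the first by a time-reversal trick. To permit this, I would first choose the residual set symmetrically. Let $\mathcal{R}_0$ be the residual set provided jointly by Proposition~\ref{Pro:generic} and Lemma~\ref{Lem:splittingonperiodic}, and set $\mathcal{R}:=\mathcal{R}_0\cap(-\mathcal{R}_0)$, where $-\mathcal{R}_0=\{-Y:Y\in\mathcal{R}_0\}$. Since $Y\mapsto -Y$ is a self-homeomorphism of $\mathcal{X}^1(M)$, this intersection is still residual, and for every $X\in\mathcal{R}$ both $X$ and $-X$ enjoy all the generic properties used below.

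\emph{First bullet.} Assume ${\rm Ind}(\sigma)>i$. Lemma~\ref{Lem:splittingonsingularity2} furnishes a dominated splitting $T_\sigma M=E^{ss}_\sigma\oplus E^{cs}_\sigma\oplus E^u_\sigma$ for the tangent flow with $\dim E^{ss}_\sigma=i$ and $E^u_\sigma$ the full unstable space of $\sigma$. I would read this as the partially hyperbolic splitting required by Lemma~\ref{Lem:stableoutside} by setting $E^c_\sigma:=E^{cs}_\sigma$ and $E^{uu}_\sigma:=E^u_\sigma$; the constraint $i\in[\dim E^{ss}_\sigma,\dim(E^{ss}_\sigma\oplus E^c_\sigma)-1]$ follows from ${\rm Ind}(\sigma)>i$. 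The uniform $l$-dominated splitting of index $i$ along periodic orbits of nearby vector fields in a neighborhood of $\Lambda$ is supplied by Lemma~\ref{Lem:splittingonperiodic}. Since $E^{uu}_\sigma=E^u_\sigma$ we have $W^{uu}(\sigma)=W^u(\sigma)$, and Lemma~\ref{Lem:nonemptyonmanifolds} yields $W^{uu}(\sigma)\cap\Lambda\setminus\{\sigma\}\neq\emptyset$. All three hypotheses of Lemma~\ref{Lem:stableoutside} are met, so $W^{ss}(\sigma)\cap\Lambda=\{\sigma\}$, which is the desired conclusion.

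\emph{Second bullet.} When ${\rm Ind}(\sigma)\le i$, I would apply the first bullet to $-X\in\mathcal{R}$. An index-$i$ periodic orbit of $X_n\to X$ is, viewed as a periodic orbit of $-X_n\to -X$, of index $i':=\dim M-1-i$, so $\Lambda$ contains an $i'$-periodic limit for $-X$. Chain transitivity of the isolated set $\Lambda$ is symmetric under time reversal, and by genericity of $-X$ it is also a non-trivial transitive isolated set of $-X$. The hypothesis ${\rm Ind}_X(\sigma)\le i$ gives ${\rm Ind}_{-X}(\sigma)=\dim M-{\rm Ind}_X(\sigma)\ge \dim M-i>i'$, placing the data $(-X,\Lambda,\sigma,i')$ squarely in the setting of the first bullet. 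The resulting identity $W^{ss}_{-X}(\sigma)\cap\Lambda=\{\sigma\}$ is precisely $W^{uu}_X(\sigma)\cap\Lambda=\{\sigma\}$, since reversing time interchanges the strong stable and strong unstable manifolds of $\sigma$.

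The main point that requires care is ensuring that every generic property invoked is preserved under $X\mapsto -X$: index persistence of periodic orbits, the existence of hyperbolic periodic orbits of the correct index, the equivalence of transitivity and chain transitivity for isolated sets, and the $l$-dominated splitting for periodic orbits in a neighborhood of $\Lambda$. This is arranged once and for all by the symmetric choice $\mathcal{R}=\mathcal{R}_0\cap(-\mathcal{R}_0)$; once in place, the argument is a direct invocation of Lemma~\ref{Lem:stableoutside} in both cases.
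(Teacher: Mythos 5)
Your proof is correct and takes essentially the same route as the paper: the first bullet is deduced from Lemma~\ref{Lem:stableoutside} by feeding it the splitting from Lemma~\ref{Lem:splittingonsingularity2} and the uniform $l$-domination from Lemma~\ref{Lem:splittingonperiodic}, exactly as in the text. The paper disposes of the second bullet with a one-word ``similarly,'' and your explicit time-reversal reduction (with the symmetrized residual set $\mathcal{R}_0\cap(-\mathcal{R}_0)$) is a clean and careful way to make that rigorous; it also correctly notes all the invariances under $X\mapsto -X$ needed for the reduction to go through.
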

\begin{proof}
This is a direct corollary of Lemma~\ref{Lem:stableoutside}. Let $\sigma\in\Lambda$ be a hyperbolic singularity with ${\rm Ind}(\sigma)>i$. Since $\Lambda$ is a transitive set, from Lemma \ref{Lem:nonemptyonmanifolds} we have $(W^u(\sigma)\setminus \{\sigma\})\cap \Lambda\neq\emptyset$. Lemma \ref{Lem:splittingonperiodic} tells us that there is a $C^1$-neighborhood ${\cal U}$ of $X$ such that
every periodic orbit of $Y$ close to $\Lambda$ admits an
$l$-dominated splitting of index $i$ w.r.t. the linear
Poincar\'e flow $\psi_t^Y$. Note that $i=\dim E_\sigma^{ss}$ here. Then we can get that $(W^{ss}(\sigma)\setminus\{\sigma\})\cap\Lambda=\emptyset$ directly by Lemma \ref{Lem:stableoutside}. Similarly we can see that if ${\rm Ind}(\sigma)\le i$, then $(W^{ss}(\sigma)\setminus\{\sigma\})\cap\Lambda=\emptyset$.
\end{proof}

\begin{Lemma}\label{Lem:put-outside}
There is a residual set $\cR$ with the following properties. Let $X\in({\cal X}^1(M)\setminus\overline{HT})\cap\mathcal{R}$ and
$\Lambda$ be an isolated transitive set of $X$.  Assume that $\Lambda$ contains
an $i$-periodic limit. If $\sigma$ is a hyperbolic singularity in $\Lambda$, then we have the following cases:
\begin{itemize}
\item when ${\rm Ind}(\sigma)>i$, one has $B(\Lambda)\cap T_\sigma M\subset E_\sigma^{cs}\oplus E_\sigma^u$.
\item when ${\rm Ind}(\sigma)\le i$, one has $B(\Lambda)\cap T_\sigma M\subset E_\sigma^{s}\oplus E_\sigma^{cu}$.
\end{itemize}
\end{Lemma}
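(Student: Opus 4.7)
My plan is to derive the first statement (the case $\operatorname{Ind}(\sigma)>i$) by contradiction with Lemma~\ref{Lem:stableoutside2}; the second statement then follows by applying the first to $-X$, which exchanges $W^{ss}(\sigma)$ with $W^{uu}(\sigma)$ and converts an $i$-periodic limit of $X$ into a $(\dim M-1-i)$-periodic limit of $-X$.

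Suppose for contradiction that there is a unit vector $e\in B(\Lambda)\cap T_\sigma M$ with nonzero $E_\sigma^{ss}$-component. Because $B(\Lambda)$ is compact and $\Phi_t^\#$-invariant, and because backward iteration of $\Phi_t$ expands $E_\sigma^{ss}$ strictly faster than $E_\sigma^{cs}\oplus E_\sigma^u$ by the partial hyperbolic splitting of Lemma~\ref{Lem:splittingonsingularity2}, any accumulation point of $\Phi_{-t}^{\#}(e)$ as $t\to+\infty$ is a unit vector lying in $E_\sigma^{ss}$. So after replacement I may assume $e\in B(\Lambda)\cap E_\sigma^{ss}$, and the task reduces to ruling out such a direction.

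Unfolding the definition of $B(\Lambda)$, pick sequences $X_n\to X$, hyperbolic periodic orbits $P_n\subset U$ of $X_n$, and points $p_n\in P_n$ with $p_n\to\sigma$ and $X_n(p_n)/\|X_n(p_n)\|\to e$. In a linearizing chart at the continuation $\sigma_{X_n}$, decompose $p_n=p_n^{ss}+p_n^{cs}+p_n^u$; since $X_n(p_n)\approx DX_n(\sigma_{X_n})p_n$ and the normalized velocity approaches $E_\sigma^{ss}$, comparison of eigenvalue magnitudes forces $\|p_n^{cs}\|/\|p_n^{ss}\|\to 0$ and $\|p_n^u\|/\|p_n^{ss}\|\to 0$. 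Next, shrink to a linearizing neighborhood $V\Subset U$ of $\sigma$ and let $s_n=\sup\{s\ge 0:\varphi_{-t}^{X_n}(p_n)\in V\text{ for all }t\in[0,s]\}$; since $p_n\to\sigma$ and each $P_n$ is nontrivial, $s_n$ is finite and tends to $+\infty$. Setting $q_n=\varphi_{-s_n}^{X_n}(p_n)\in\partial V$ and using the spectral gap $|\lambda^{ss}|>|\lambda^{cs}|$, backward iteration amplifies $p_n^{ss}$ strictly more than $p_n^{cs}$, so the ratio bounds propagate: $\|q_n^{cs}\|/\|q_n^{ss}\|$ and $\|q_n^u\|/\|q_n^{ss}\|$ tend to zero. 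Along a subsequence $q_n\to q\in\partial V$ with $q\in W^{ss}_{\mathrm{loc}}(\sigma)\setminus\{\sigma\}$.

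Since $P_n\subset U$ and $X_n\to X$, the Hausdorff limit of $\{P_n\}$ is $X$-invariant and, after possibly shrinking $U$ and using the isolation of $\Lambda$, contained in $\Lambda$; so $q\in\Lambda$, giving $q\in (W^{ss}(\sigma)\setminus\{\sigma\})\cap\Lambda$ and contradicting Lemma~\ref{Lem:stableoutside2}. The hard part will be justifying ``$q\in W^{ss}_{\mathrm{loc}}(\sigma)$'' rigorously rather than merely ``$q\in W^{s}_{\mathrm{loc}}(\sigma)$'', since the weaker conclusion yields no contradiction: by Lemma~\ref{Lem:nonemptyonmanifolds}, $W^s(\sigma)\cap\Lambda$ is typically non-trivial. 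I expect this step to require quantitative first-order control on $\|p_n^{cs}\|/\|p_n^{ss}\|$ transported through the long backward interval $[-s_n,0]$ using the spectral gap between $E_\sigma^{ss}$ and $E_\sigma^{cs}$, equivalently an appeal to the strong stable foliation of $W^s(\sigma)$ to confirm that $q$ lies on the strong stable leaf through $\sigma$.
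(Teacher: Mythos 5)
Your opening reduction (pass to an accumulation point in $E_\sigma^{ss}$, deal with the case ${\rm Ind}(\sigma)\le i$ by time reversal) matches the paper's first step. But the core of your argument — proving that the backward exit points $q_n=\varphi^{X_n}_{-s_n}(p_n)$ converge to a point $q\in W^{ss}_{\mathrm{loc}}(\sigma)\setminus\{\sigma\}$ — differs from what the paper does, and the ``hard part'' you flag is a real gap rather than a routine omission. Two concrete issues: first, you invoke a ``linearizing chart at the continuation $\sigma_{X_n}$'', but the ambient regularity is only $C^1$ and no $C^1$-linearization near a hyperbolic singularity is available, so your decomposition $p_n=p_n^{ss}+p_n^{cs}+p_n^{u}$ and the exact exponential formulas for $\varphi^{X_n}_{-s}$ are not justified (the first-order deduction $\|p_n^{cs}\|/\|p_n^{ss}\|\to 0$ is fine using only $X_n(p_n)=DX_n(\sigma_{X_n})(p_n-\sigma_{X_n})+o$, but the persistence of that ratio across the unbounded times $[-s_n,0]$ is not). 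Second, even granting $C^1$-control, what you need is a \emph{position} (not tangent-bundle) strong-stable cone that is backward-invariant uniformly in $n$; this amounts to a Hadamard--Perron/graph-transform argument for the displacements $\varphi^{X_n}_{-s}(p_n)-\sigma_{X_n}$, and nothing in your sketch supplies it.

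The paper sidesteps all of this by using Lemma~\ref{Lem:stableoutside2} in the opposite logical direction. It does not try to show $q\in W^{ss}(\sigma)$; instead it merely records $q\in W^s(\sigma)\cap\Lambda$ with $q\neq\sigma$ (since $q$ lies on $\partial V$), and then Lemma~\ref{Lem:stableoutside2} forces $q\notin W^{ss}(\sigma)$. Because $q\in W^s(\sigma)\setminus W^{ss}(\sigma)$, the tangent direction $X(\varphi_{T_1}(q))$ eventually enters the tangent cone $C_1^{cu}$ around $E_\sigma^{cs}\oplus E_\sigma^u$, and that cone is \emph{forward}-invariant for the tangent flows $\Phi_t^{Y}$ of all $Y$ near $X$ — a robust cone-field statement that needs no linearization and no control of positions inside $V$. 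Pushing forward from time $T_1$ back to time $0$ puts $X_n(p_n)$ inside $C_1^{cu}(p_n)$, directly contradicting $X_n(p_n)/\|X_n(p_n)\|\to e\in E_\sigma^{ss}$. So the paper gets the contradiction at $p_n$ using a forward cone and the \emph{conclusion} of Lemma~\ref{Lem:stableoutside2} as an input, instead of trying to prove a stronger statement about $q$ and then invoke Lemma~\ref{Lem:stableoutside2} only at the end. If you want to salvage your route, you would need to build the uniform nonlinear backward strong-stable cone; it is doable but strictly more work and yields no additional benefit for this lemma.
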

\begin{proof}
The proof is similar to the proof of \cite[Lemma 4.4]{LGW}. We give the sketch of proof here for completeness. Let $\sigma\in\Lambda$ be a hyperbolic singularity with ${\rm Ind}(\sigma)>i$. Once we have $e\in B(\Lambda)\setminus(E_\sigma^{cs}\oplus E_\sigma^u)$, then one can find an accumulated point of $\Phi_t^\#(e)$ in $E_\sigma^{ss}$, hence in $B(\Lambda)\cap E_\sigma^{ss}$. To prove $B(\Lambda)\cap T_\sigma M\subset (E_\sigma^{cs}\oplus E_\sigma^u)$, one just need to show that $B(\Lambda)\cap E_\sigma^{ss}=\emptyset$. Now we assume on the contrary, that is, there exists $e\in B(\Lambda)\cap E_\sigma^{ss}$. This means there are a sequence $X_n\to X$ and periodic points $p_n$ of $X_n$ whose orbit contained in a neighborhood $U$ (the isolation neighborhood of $\Lambda$) such that $X_n(p_n)/\|X_n(p_n)\|\to e$ as $n\to\infty$.

We can get a contradiction by the following way. Firstly, we can continuously extend the splitting $E^{ss}_\sigma\oplus(E_\sigma^{cs}\oplus E_\sigma^u)$ to a neighborhood of $\sigma$, and then put a cone field $C^{cu}$ in a neighborhood of $\sigma$ by
$$C_1^{cu}(x)=\{v=v^{ss}+v^{cu}\in T_x M: v^{ss}\in E^{ss}, v^{cu}\in E^{cs}\oplus E^u, \|v^{ss}\|<\|v^{cu}\|\}$$
at every $x$ close to $\sigma$. Note that $E^{ss}_\sigma\oplus(E_\sigma^{cs}\oplus E_\sigma^u)$ is a dominated splitting for $\Phi_t$ and the flow $\varphi_t^Y$ generated by $Y$ will be close to $\Phi_t^X$ in a small neighborhood of $\sigma$ once $Y$ close to $X$ enough in some local chart. Then we can take a neighborhood $\mathcal{U}$ of $X$ and a neighborhood $V(\sigma)$ of $\sigma$ and $T>0$ such that for any $Y\in \mathcal{U}$, once there is an orbit segment $\varphi_{[0,t]}^Y(x)$ of $Y$ contained in $V(\sigma)$ with $t>T$, then $\Phi_t^Y(C_1^{cu}(x))\subset C_1^{cu}(\varphi_t^Y(x))$ where $\varphi_t^Y$ and $\Phi_t^Y$ denote the flow and tangent flow generated by $Y$ as usual. Since $p_n\to\sigma$ and $\sigma$ is a hyperbolic fixed point, if necessary, take a subsequence of $p_n$, one can find a sequence $q_n=\varphi_{t_n}(p_n)$  with $t_n\to-\infty$ such that $q_n$ converges to some point $q$ in $W^s(\sigma)$ as $n\to\infty$. By applying Lemma \ref{Lem:stableoutside2} we have that $(W^{ss}(\sigma)\setminus\{\sigma\})\cap\Lambda=\emptyset$, then $q\notin W^{ss}(\sigma)$. Thus $X(\varphi_t(q))$ will accumulate in $E^{cs}$ as $t\to+\infty$. Now we can choose $T_1>0$ big enough such that $X(\varphi_{T_1}(q))\in C_1^{cu}(\varphi_{T_1}(q))$. Then we have $X_n(\varphi_{T_1}^{X_n}(q_n))\in C_1^{cu}(\varphi_{T_1}^{X_n}(q_n))$ for $n$ large enough. By the fact that $t_n\to -\infty$, we have $-t_n>T_1+T$ for $n$ large enough. Then we have
$$X_n(p_n)=X_n(\varphi^{X_n}_{-t_n}(q_n))=\Phi^{X_n}_{-t_n-T_1}(X_n(\varphi^{X_n}_{T_1}(q_n)))\in C_1^{cu}(p_n)$$
for $n$ large enough. Hence $X_n(p_n)/\|X_n(p_n)\|$ can not converge to any vector in $E^{ss}$. We have a contradiction. This proves that if ${\rm Ind}(\sigma)>i$, then $B(\Lambda)\cap T_\sigma M\subset E_\sigma^{cs}\oplus E_\sigma^u$. Similar proof holds for the case of ${\rm Ind}(\sigma)\le i$.
\end{proof}

In the following lemma, we assume that $X$ is locally linearizable near a singularity $\sigma$, that is, there is a small neighborhood $V$ of $\sigma$ and a diffeomorphism $\alpha:V\to \mathbb{R}^n$ such that $\alpha(\sigma)=0$ and $D_p\alpha(X(p))=A(\alpha(p))$ for every $p\in V$ where $A$ is a linear vector field on $\mathbb{R}^n$.

\begin{Lemma}\label{Lem:accumulation}
Assume that $\Lambda$ is a compact invariant set of $X$ and there are a
partially hyperbolic splitting $T_\sigma M=E_\sigma^{ss}\oplus E_\sigma^c\oplus
E_\sigma^{uu}$ w.r.t. the tangent flow and a dominated splitting $N_{\Lambda\setminus
Sing(x)}=N^{cs}\oplus N^{cu}$ w.r.t. the linear Poincar\'e flow with $\dim N^{cs}=\dim{E}^{ss}$. If
$W^{ss}(\sigma)\cap\Lambda=\{\sigma\}$, then for any $y\in
W^{uu}(\sigma)\setminus\{\sigma\}\cap\Lambda$ and any sequence
$y_n\in(\Lambda\setminus\{y\})\cap \exp_y(N_y)$ with
$\lim_{n\to\infty} y_n=y$, the set of accumulation points of
$$\left\{\frac{\exp_y^{-1}y_n}{\|\exp_y^{-1}y_n\|}\right\}$$
is contained in
$N_y^{cu}$.
\end{Lemma}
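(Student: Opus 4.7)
The plan is to argue by contradiction. After passing to a subsequence, suppose $w_k:=\exp_y^{-1}(y_{n_k})$ satisfies $w_k/\|w_k\|\to v$ with $v\notin N_y^{cu}$, and decompose $v=v^{cs}+v^{cu}$ so that $v^{cs}\neq 0$. I will flow $y_{n_k}$ backward by suitably chosen times $T_k\to\infty$ and show that the iterates accumulate on a point of $W^{ss}_{\mathrm{loc}}(\sigma)\setminus\{\sigma\}$ inside $\Lambda$, contradicting the hypothesis $W^{ss}(\sigma)\cap\Lambda=\{\sigma\}$.

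Two preliminary facts drive the argument. First, domination of the linear Poincar\'e flow (together with $v^{cs}\neq 0$) forces $\psi_{-t}(v)/\|\psi_{-t}(v)\|$ to align with $N^{cs}(\varphi_{-t}(y))$ as $t\to+\infty$. Second, $y\in W^{uu}(\sigma)\setminus\{\sigma\}$ makes $\varphi_{-t}(y)\to\sigma$ along $E^{uu}_\sigma$, so along a subsequence $X(\varphi_{-t}(y))/\|X(\varphi_{-t}(y))\|\to e\in\tilde{\Lambda}\cap E^{uu}_\sigma$. Extending $N^{cs}\oplus N^{cu}$ to $\tilde{\Lambda}$ via Proposition~\ref{Pro:extendedLPF}, the extended linear Poincar\'e flow $\tilde{\psi}_t$ acts on $N_e=E^{ss}_\sigma\oplus E^c_\sigma\oplus(E^{uu}_\sigma\cap e^\perp)$ with $E^{ss}_\sigma$ as its strongest contracting direction; the hypothesis $\dim N^{cs}=\dim E^{ss}_\sigma$ and uniqueness of the dominated splitting give $N^{cs}(e)=E^{ss}_\sigma$, and hence $N^{cs}(\varphi_{-t}(y))\to E^{ss}_\sigma$ in the Grassmannian. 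Combining these, for every sufficiently large $T_0$ the vector $\Phi_{-T_0}(v)=\psi_{-T_0}(v)+\mu_{T_0}X(\varphi_{-T_0}(y))\in T_{\varphi_{-T_0}(y)}M$ has its $E^{ss}_\sigma$-component essentially equal to $\psi_{-T_0}(v)$ (nearly aligned with $E^{ss}_\sigma$), because the tangential correction lies in $E^{uu}_\sigma$ and cannot cancel it.

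Fix such a $T_0$ so that $q:=\varphi_{-T_0}(y)$ belongs to the linearizable neighbourhood $V$ of $\sigma$; for $k$ large, $q_k:=\varphi_{-T_0}(y_{n_k})$ also belongs to $V$. Using the chart $\alpha$ to identify $V$ with a neighbourhood of $0\in\mathbb{R}^n$ (so that $\sigma=0$ and $\varphi_{-s}(p)=e^{-sA}p$ whenever the orbit stays in $V$), set $\xi_k:=q_k-q$. By $C^1$-dependence of the flow at $y$, $\xi_k/\|\xi_k\|\to\Phi_{-T_0}(v)/\|\Phi_{-T_0}(v)\|$, so the $E^{ss}_\sigma$-component of $\xi_k$ is asymptotically nonzero. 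Let $s_k$ be the first time at which $\|e^{-s_k A}\xi_k\|=\delta$ for a fixed small $\delta>0$: such $s_k$ exists and $s_k\to+\infty$, since $\|\xi_k\|\to 0$ and the nonzero $E^{ss}_\sigma$-part expands most rapidly under $e^{-sA}$. During $s\in[0,s_k]$ we have $\|e^{-sA}\xi_k\|\leq\delta$, and $\|e^{-sA}(q-\sigma)\|\leq\|q-\sigma\|$ since $q-\sigma\in E^{uu}_\sigma$ contracts under backward flow; hence the orbit stays in $V$ and the linearized formula applies. As $s_k\to\infty$ the $E^{ss}_\sigma$-component dominates, so the direction of $e^{-s_k A}\xi_k$ converges along a further subsequence to some unit vector $u^\ast\in E^{ss}_\sigma$, and
\[
\varphi_{-(T_0+s_k)}(y_{n_k})-\sigma = e^{-s_k A}(q-\sigma)+e^{-s_k A}\xi_k\longrightarrow \delta u^\ast.
\]
Closedness and invariance of $\Lambda$ then give $\sigma+\delta u^\ast\in\Lambda\cap W^{ss}_{\mathrm{loc}}(\sigma)\setminus\{\sigma\}$, the desired contradiction.

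The main obstacle I anticipate is the clean identification $N^{cs}(\varphi_{-t}(y))\to E^{ss}_\sigma$ through the extended linear Poincar\'e flow in this genuinely partially hyperbolic setting (with $E^c_\sigma$ possibly nontrivial), together with the verification that the tangent-to-flow correction $\mu_{T_0}X(q)$ does not destroy the $E^{ss}_\sigma$-component of $\Phi_{-T_0}(v)$. This last point reduces to the transversality $X(q)\in E^{uu}_\sigma\perp E^{ss}_\sigma$, but merits explicit checking.
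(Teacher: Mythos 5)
Your proof is correct and takes essentially the same route as the paper's: reduce to the linearized neighborhood of $\sigma$, show the accumulation direction must carry a nonzero $E^{ss}_\sigma$-component (using that $N^{cs}(e)=E^{ss}_\sigma$ for $e\in\tilde\Lambda\cap E^{uu}_\sigma$ and that $X(q)\in E^{uu}_\sigma$), and flow backward until a point of $\Lambda\cap W^{ss}_{\mathrm{loc}}(\sigma)\setminus\{\sigma\}$ appears. The paper normalizes $y$ directly into the chart (absorbing your explicit time-$T_0$ shift into a ``without loss of generality'') and first establishes $N^{cu}_y=N_y\cap(E^c_y\oplus E^{uu}_y)$ so as to decompose the accumulation vector along $E^{ss}$ rather than along $N^{cs}$, but the underlying argument is identical.
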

\begin{proof} The idea of this lemma is from \cite[Lemma 3.3]{ZGW08}. For any $x\in\Lambda\setminus Sing(X)$, define
$$D(x)=\{e\in T_xM: \exists x_n\in\Lambda, \text {s.t., } \frac{\exp_x^{-1}(x_n)}{\|\exp_x^{-1}(x_n)\|}\to e \text{ as } n\to\infty\},$$
$$D_N(x)=D(x)\cap N_x.$$
Then from \cite[Lemma 3.1]{ZGW08} we know that $D(x)$ is $\Phi_t^\#$ invariant. From Lemma 3.2 of \cite{ZGW08} we know that $D_N(x)$ is $\psi_t^\#$ invariant where $\psi_t^\#$ is defined by $\psi_t^\#(v)=\psi_t(v)/\|\psi_t(v)\|$ for all unit vector $v\in N$. The lemma here means that for any $y\in W^{uu}(\sigma)\setminus \{\sigma\}\cap\Lambda$, we have $D_N(y)\subset N^{cu}_y$.

Without loss of generality, we assume that $E_\sigma^{ss}$, $E_\sigma^c$ and $E_\sigma^{uu}$ are mutually orthogonal. The dominated splitting $N_{\Lambda\setminus
Sing(x)}=N^{cs}\oplus N^{cu}$ for the linear Poincar\'e flow $\psi_t$ can be extended to be a dominated splitting $N_{\tilde\Lambda}=N_{\tilde\Lambda}^{cs}\oplus N_{\tilde\Lambda}^{cu}$ for the extended linear Poincar\'e flow.  For any $y\in W^{uu}\setminus\{\sigma\}\cap \Lambda$ and any accumulation point $e$ of $\{X(\varphi_{t}(y))/\|X(\varphi_t(y))\|:t<0\}$, one has that $e\in E^{uu}_\sigma$. From the claim in the proof of Lemma \ref{Lem:stableoutside}, we know that $N_e^{cs}=E_\sigma^{ss}$ by the assumption $\dim N^{cs}=\dim E^{ss}_\sigma$.

By the assumption of locally linearizable of $X$ near $\sigma$, taking the local chart $(V,\alpha)$, ignoring the chart map $\alpha$, we can assume $\sigma=0\in\mathbb{R}^{n}=E_\sigma^{ss}\oplus E_\sigma^{c}\oplus E_\sigma^{uu}$,  $X(x)=(A^{ss}(x^{ss}), A^c(x^c), A^{uu}(x^{uu}))$ where $x=x^{ss}+x^c+x^{uu}$ and $x^{ss}\in E_\sigma^{ss}$, $x^c\in E_\sigma^c$, $x^{uu}\in E^{uu}$, $A^{ss}=A|_{E_\sigma^{ss}}$, $A^c=A|_{E_\sigma^c}$, $A^{uu}=A|_{E_\sigma^{uu}}$. Without loss of generality, we assume $\alpha(V)$ contains a box $$\{x^{ss}+x^{c}+x^{uu}|x^{i}\in E_\sigma^{i}, \|x^{i}\|\leq 2; i=ss,c,uu\}.$$ and $y=y^{uu}\in W_\sigma^{uu}$ with $\|y^{uu}\|=1$.
We can check that $N_y^{cu}=N_y\cap (E_y^{c}\oplus E_y^{uu} )$ where $E_y^c$ and $E_y^{uu}$ are the natural translation of $E_\sigma^c$ and $E_\sigma^{uu}$ respectively. Otherwise, if we have a vector $v\in N_y^{cu}$ such that $v=v^{ss}+v^{cu}$, $0\neq v^{ss}\in E_y^{ss}$ and $v^{cu}\in E_y^{c}\oplus E_y^{uu}$, then a direct computation shows that the directions of $\psi_t(v)$ will tend to the bundle $E_\sigma^{ss}$ as $t\to-\infty$, this contradicts with $N_e^{cs}=E_\sigma^{ss}$ for any accumulation point $e$ of $\{X(\varphi_{t}(y))/\|X(\varphi_t(y))\|:t<0\}$.

Now we suppose that $D_N(y)\subset N^{cu}_y$ does not hold. This means there is a vector $u\in D_N(y)\setminus N_y^{cu}$ with $u=u^{ss}+u^{cu}$ where $0\neq u^{ss}\in E_y^{ss}$ and $u^{cu}\in E_{y}^c\oplus E_{y}^{uu}$. Then there exists a sequence $y_n\to y$ as $n\to\infty$ such that
$$(y_n-y)/\|y_n-y\|=u_n^{ss}+u_n^{cu}\to u,$$
where $u_n^{ss}\in E^{ss}$ and $u_n^{cu}\in E^{c}\oplus E^{uu}$ . Write $a_n=\|y_n-y\|$, then $y_n=y+a_n(u_n^{ss}+u_n^{cu})$. Since $u^{ss}\neq 0$, we have $u_n^{ss}\neq 0$ for $n$ large enough, then we can take $t_n<0$ such that $\|{\rm exp}({t_n A})(a_nu_n^{ss})\|=1$. By the facts that $a_n\to 0$ and ${\rm exp}(tA)|_{E^{ss}}$ is contracting, we can see that $t_n\to-\infty$. By taking a subsequence if necessary, we can assume that $\|{\rm exp}(t_n A)(a_nu_n^{ss})\|\to z^{ss}$ as $n\to\infty$. Then by the fact that $E_\sigma^{ss}\oplus E_\sigma^{c}\oplus E_\sigma^{uu}$ is a dominated splitting for ${\rm exp}(tA)$, we have ${\rm exp}(t_nA)(y_n)\to z^{ss}$ as $n\to\infty$. Now we find a point $z^{ss}\in\Lambda\cap (W_\sigma^{ss}\setminus\{\sigma\})$. This is a contradiction.
\end{proof}

\begin{Lemma}\label{Lem:periodicinter}
Assume that $\Lambda$ is a robustly transitive set, which contains two
hyperbolic singularities $\sigma_1$ and $\sigma_2$ such that ${\rm
Ind}(\sigma_1)<{\rm Ind}(\sigma_2)$. Then for any $C^1$ neighborhood
${\cal U}$ of $X$, there is $Y\in \cal U$ which has a hyperbolic
periodic orbit $P\subset\Lambda_Y$ such that ${\rm
Ind}(P)\in[{\rm Ind}(\sigma_1),{\rm Ind}(\sigma_2)-1]$.

\end{Lemma}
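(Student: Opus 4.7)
The plan is to split on the signs of the saddle values $I(\sigma_1)$ and $I(\sigma_2)$. In two of the three cases Lemma~\ref{Lem:periodiclimitfromsingularity} alone supplies the required periodic orbit; in the remaining, genuinely interesting case I combine Lemma~\ref{Lem:periodiclimitfromsingularity} with Item~4 of Proposition~\ref{Pro:generic} (the ``index interpolation'' item from \cite{ABCDW}) to bridge the gap.

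Fix a $C^1$-neighborhood $\mathcal{U}$ of $X$ contained in the robust-transitivity neighborhood, and an isolating neighborhood $U_0$ of $\Lambda$. \emph{Case A}: if $I(\sigma_1)\le 0$, Lemma~\ref{Lem:periodiclimitfromsingularity} gives an $\mathrm{Ind}(\sigma_1)$-periodic limit $\Gamma\subset\Lambda$, i.e.\ vector fields $Y_n\to X$ carrying hyperbolic periodic orbits $P_n$ of index $\mathrm{Ind}(\sigma_1)$ with $P_n\to\Gamma$ in Hausdorff distance. For $n$ large, $P_n\subset U_0$, hence $P_n\subset\Lambda_{Y_n}$; since $\mathrm{Ind}(\sigma_1)<\mathrm{Ind}(\sigma_2)$ places $\mathrm{Ind}(\sigma_1)$ in the target window, $(Y_n,P_n)$ is the required pair. \emph{Case B}: if $I(\sigma_2)\ge 0$, the symmetric argument applied to $\sigma_2$ produces an $(\mathrm{Ind}(\sigma_2)-1)$-periodic orbit in $\Lambda_{Y_n}$, again in the target window.

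\emph{Case C} ($I(\sigma_1)>0$ and $I(\sigma_2)<0$) is the main case, and here Lemma~\ref{Lem:periodiclimitfromsingularity} alone only yields indices $\mathrm{Ind}(\sigma_1)-1$ and $\mathrm{Ind}(\sigma_2)$, which straddle but do not meet the target interval. I move to a generic $Z\in\mathcal{R}\cap\mathcal{U}$, where $\mathcal{R}$ is the residual set from Proposition~\ref{Pro:generic}. Since $\sigma_1,\sigma_2$ are hyperbolic and the saddle value depends continuously on the vector field, their continuations $\sigma_1^Z,\sigma_2^Z\in\Lambda_Z$ still have the same indices and satisfy $I(\sigma_1^Z)>0$, $I(\sigma_2^Z)<0$. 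Applying Lemma~\ref{Lem:periodiclimitfromsingularity} to $\sigma_1^Z$ and $\sigma_2^Z$ produces, respectively, an $(\mathrm{Ind}(\sigma_1)-1)$-periodic limit and an $\mathrm{Ind}(\sigma_2)$-periodic limit inside $\Lambda_Z$. Item~3 of Proposition~\ref{Pro:generic} then yields hyperbolic periodic orbits $Q_1,Q_2$ of $Z$ itself, of these two indices, with $Q_1,Q_2\subset\Lambda_Z$. By Item~2 applied to the isolated transitive set $\Lambda_Z$, we have $\Lambda_Z=H(Q_1,U)=H(Q_2,U)$ for a suitable isolating neighborhood $U\subset U_0$. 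Choose $V$ with $\overline{U}\subset V\subset U_0$ and apply Item~4: for every $i\in[\mathrm{Ind}(\sigma_1)-1,\mathrm{Ind}(\sigma_2)]$ there is a hyperbolic periodic orbit $P\subset V$ of $Z$ of index $i$. Pick any $i\in[\mathrm{Ind}(\sigma_1),\mathrm{Ind}(\sigma_2)-1]$; since $P\subset V\subset U_0$ and is $\varphi^Z$-invariant, $P\subset\Lambda_Z$, completing the argument.

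The main obstacle is Case~C, and within it the bookkeeping that makes sure the periodic orbit delivered by Item~4 lives inside $\Lambda_Z$ rather than merely inside the strictly larger set $V$; this is precisely what the nesting $U\Subset V\subset U_0$ enforces. A minor point used implicitly is that the strict inequalities $I(\sigma_1)>0$ and $I(\sigma_2)<0$ survive the initial generic perturbation $X\rightsquigarrow Z$, which follows from continuous dependence of the eigenvalues of $DX(\sigma_i)$ on $X$; the borderline cases $I(\sigma_i)=0$ cause no trouble because Lemma~\ref{Lem:periodiclimitfromsingularity} uses non-strict inequalities and therefore absorb them into Cases~A or~B.
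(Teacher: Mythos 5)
Your proof is correct and follows essentially the same route as the paper's: the same trichotomy on saddle values, Lemma~\ref{Lem:periodiclimitfromsingularity} in the two boundary cases, and the generic index-interpolation (Items~2 and~4 of Proposition~\ref{Pro:generic}) in the main case $I(\sigma_1)>0>I(\sigma_2)$. The only deviation is a cosmetic reorganization in Case~C, where you pass to a generic $Z\in\mathcal R\cap\mathcal U$ first and invoke Item~3 to convert the periodic limits into genuine periodic orbits of $Z$, whereas the paper composes two small perturbations to produce the two anchor orbits directly and only afterwards moves to a nearby generic vector field where those orbits persist.
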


\begin{proof}
If $I(\sigma_1)\leq 0$, then by Lemma \ref{Lem:periodiclimitfromsingularity} we can find $Y$ arbitrarily close to $X$ such that $Y$ has a hyperbolic periodic orbit $P$ with ${\rm Ind}(P)={\rm Ind}(\sigma_1)$. So the lemma is true in this case. Similarly, in the case of $I(\sigma_2)\ge0$ the lemma is also true.

Now we can assume that $I(\sigma_2)<0$ and $I(\sigma_1)>0$. Firstly, after applying Lemma \ref{Lem:periodiclimitfromsingularity} we can get a periodic orbit of index ${\rm Ind}(\sigma_1)-1$ after an arbitrarily small perturbation. Since $\Lambda$ is robustly transitive, apply Lemma \ref{Lem:periodiclimitfromsingularity} again for $\sigma_2$, we can get another periodic orbit of index ${\rm Ind}(\sigma_2)$ after another arbitrarily small perturbation. Thus for any neighborhood $\cal U$ of $X$, there
is $Z\in\cal U$ such that $\Lambda_Z$ has hyperbolic periodic orbits
$P_1$ of index ${\rm Ind}(\sigma_1)-1$ and $P_2$ of index
${\rm Ind}(\sigma_2)$. Since $Z$ can be accumulated by generic
vector fields, one can assume that $Z$ is in the residual set $\cal
R$ in Proposition \ref{Pro:generic}. Thus for an isolated neighborhood $U$, one has
$H(P_1,U)=H(P_2,U)=\Lambda_Z$ by Item 2 of
Proposition~\ref{Pro:generic}. Then by Item 4 of
Proposition~\ref{Pro:generic}, we get the conclusion.
\end{proof}

To prove Theorem \ref{Thm:uniqueindex}, we prove the following
theorem under some generic assumptions.

\begin{Theorem A'}
There is a residual set $\cR$ such that for any $X\in\cR$, if $\Lambda$ is a robustly transitive set of
$X\in{\cal X}^1(M)$, then either all the singularities in $\Lambda$
have the same index, or $X$ can be accumulated by vector fields with
a homoclinic tangency.

\end{Theorem A'}

\begin{proof}
We prove the theorem by contradiction. Assume $X\in
\mathcal{R}\setminus\overline{HT}$ and $\Lambda$ is a robustly
transitive set of $X$, and there exist two singularities $\sigma_1$
and $\sigma_2$ in $\Lambda$ with different indices. Without loss of generality,
we can assume ${\rm Ind}(\sigma_1)<{\rm Ind}(\sigma_2)$. By
Lemma~\ref{Lem:periodicinter}, there is $i\in[{\rm
Ind}(\sigma_1),{\rm Ind}(\sigma_2)-1]$ such that for any neighborhood $\mathcal{U}$ of $X$, there exist $Y\in\mathcal{U}$ and a hyperbolic
periodic orbit $P$ of $Y$ of index $i$ such that $P\subset\Lambda_Y$. As a corollary, $\Lambda$ contains an $i$-periodic limit. Note that a robustly transitive set is automatically an isolated transitive set. From Lemma \ref{Lem:splittingonsingularity2} we have
\begin{itemize}

\item $T_{\sigma_2}M$ admits a dominated splitting $E_{\sigma_2}^{ss}\oplus
E_{\sigma_2}^{cs}\oplus E_{\sigma_2}^u$ w.r.t. the tangent flow, where $E_{\sigma_2}^{ss}$ is strongly
contracting and $\dim E_{\sigma_2}^{ss}=i$.

\item $T_{\sigma_1}M$ admits a dominated splitting $E_{\sigma_1}^{s}\oplus
E_{\sigma_1}^{cu}\oplus E_{\sigma_1}^{uu}$ w.r.t. the tangent flow, where $E_{\sigma_1}^{uu}$ is strongly
expanding and $\dim E^{uu}=\dim M-i-1$.

\end{itemize}

Now by Lemma~\ref{Lem:stableoutside2}, we have that
$W^{ss}(\sigma_2)\cap \Lambda=\{\sigma_2\}$ and
$W^{uu}(\sigma_1)\cap \Lambda=\{\sigma_1\}$. Since the local strong stable and unstable manifolds of hyperbolic critical elements vary continuously and the robust transitive set $\Lambda_Y$ varies lower continuously on $Y$, there is a $C^1$
neighborhood $\mathcal{U}_0$ of $X$ such that for any $Y\in\mathcal{U}_0$, one has
$W^{ss}(\sigma_{2,Y})\cap \Lambda_Y=\{\sigma_{2,Y}\}$ and
$W^{uu}(\sigma_{1,Y})\cap \Lambda_Y=\{\sigma_{1,Y}\}$ where $\sigma_{1,Y}$ and $\sigma_{2,Y}$ are the continuations of $\sigma_1,\sigma_2$ with respect to $Y$.

For any $C^1$ neighborhood $\mathcal{U}\subset \mathcal{U}_0$ of $X$, by using the connecting
lemma (Lemma~\ref{Lem:connectinglemma}), there is $Y\in\cal U$ such that $W^s(\sigma_1)\cap
W^u(\sigma_2)\neq\emptyset$. With another arbitrarily small perturbation if necessary, we can assume that $Y$ is locally linearizable at $\sigma_1$ and $\sigma_2$. We have the following properties.
\begin{itemize}

\item By Lemma~\ref{Lem:transitive-periodic-likt} and Lemma~\ref{Lem:splittingonperiodic},  we have that $\Lambda_Y\setminus{\rm
Sing}(Y)$ admits a dominated splitting $N^{cs}\oplus N^{cu}$ of
index $i$ in the normal bundle w.r.t. the linear Poincar\'e flow.

\item Take $y\in W^s(\sigma_1)\cap
W^u(\sigma_2)$. Since $\Lambda_Y$ is transitive, there is a sequence
of points $\{y_n\}\subset \Lambda_Y$ with $y_n\neq y$ such that
$\lim_{n\to\infty}y_n=y$ and $\omega(y_n)=\Lambda_Y$ for each $n$.
Without loss of generality, we can assume that $y_n\in \exp_y(N_y)$
since $y$ is a regular point.

\end{itemize}

By Lemma~\ref{Lem:accumulation},  the non-empty set $D_N(y)$ of the accumulation
points of
$$\left\{\frac{\exp_y^{-1}y_n}{\|\exp_y^{-1}y_n\|}\right\}$$
has the following properties:
\begin{itemize}

\item The fact that $y\in W^{u}(\sigma_2)$ implies $D_N(y)\subset
N^{cu}(y)$.

\item The fact that $y\in W^{s}(\sigma_1)$ implies $D_N(y)\subset
N^{cs}(y)$.

\end{itemize}

This gives a contradiction and ends the proof of Theorem A'.
\end{proof}

\begin{proof}[The proof of Theorem~\ref{Thm:uniqueindex}]

First we notice that if $X$ is a vector field which is far away from
ones with a homoclinic tangency, and if $\Lambda$ is a robustly
transitive set, then all the hyperbolic singularity in $\Lambda$
should have the same index. This is because otherwise, there is a
vector field $Y\in\cal R$ which is close to $X$ such that
$\Lambda_Y$ is a robustly transitive set with singularities of
different indices. Then by using Theorem~A', we get a
contradiction.

Now if $\Lambda$ is a robustly transitive set which contains a
non-hyperbolic singularity $\sigma$. Without loss of generality, one
can assume that
\begin{itemize}
\item either, ${\rm D}X(\sigma)$ has only one eigenvalue whose real
part is zero,

\item or, ${\rm D}X(\sigma)$ has two eigenvalues whose real
parts are zero, which are conjugate.

\end{itemize}

In the first case, by a simple perturbation, we can have
singularities with different indices in the robustly transitive set,
which gives a contradiction.

%\marginpar{Jan. 17th: the application of  Lemma~\ref{Lem:splittingonsingularity} needs more details}
Now we consider the second case. After a small
perturbation, we can get a vector field $Z\in\cR$ (here $\cR$ is given as in Lemma \ref{Lem:splittingonsingularity2}) that $\sigma$ is a hyperbolic
singularity of $Z$ whose saddle value is larger than 0 and
$T_\sigma M=E^{ss}\oplus E^{cs}\oplus E^u$, where $\dim E^{cs}=2$
and corresponds the complex eigenvalues. By Lemma~\ref{Lem:periodiclimitfromsingularity},
$\Lambda_Z$ contains an $({\rm
Ind}(\sigma)-1)$-periodic limit. Then by Lemma~\ref{Lem:splittingonsingularity2}, $E^{cs}$ should be divided into a dominated splitting, thus a contradiction. This ends the proof of Theorem \ref{Thm:uniqueindex}.
\end{proof}

{~We do some preparations for proving Theorem~\ref{Thm:improveZGW08}.
\begin{Lemma}\label{Lem:starimplies-farfrom}
If $\Lambda$ is a robustly transitive set, and $\Lambda$ is star, then there are a neighborhood $U$ of $\Lambda$ and a neighborhood $\cU$ of $X$ such that $Y\in\cU$ is far away from homoclinic tangencies in $U$. 
\end{Lemma}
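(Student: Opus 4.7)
The plan is to argue by contradiction: if a homoclinic tangency could be approximated arbitrarily near $X$ with support in any neighborhood of $\Lambda$, then an unfolding of that tangency would produce a non-hyperbolic periodic orbit in $U_0$, contradicting the star hypothesis.

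First, I use the star hypothesis together with robust transitivity of $\Lambda$ to fix a $C^1$-neighborhood $\cU_0$ of $X$ and a neighborhood $U_0$ of $\Lambda$ such that every periodic orbit of every $Y\in\cU_0$ entirely contained in $U_0$ is hyperbolic. Choose a slightly smaller open set $U_1$ with $\overline{U_1}\subset U_0$ still containing $\Lambda$. Now suppose the conclusion of the lemma fails. Then for every pair of neighborhoods $(\cU,U)\subset(\cU_0,U_1)$ of $(X,\Lambda)$ there exists $Y\in\cU$ admitting a homoclinic tangency whose associated hyperbolic periodic orbit $P$ and non-transverse intersection point $x\in W^s(P)\cap W^u(P)$ satisfy $P\cup Orb(x)\subset U$. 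A diagonal extraction then produces $Y_n\to X$ in $C^1$, hyperbolic periodic orbits $P_n$ of $Y_n$, and tangency points $x_n$ with $P_n\cup Orb_{Y_n}(x_n)$ accumulating on a compact subset of $\Lambda$ in the Hausdorff topology.

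Next I unfold each tangency. Using a Franks-type perturbation for $C^1$ flows (as in \cite[Theorem A.1]{BGV}) supported in a small neighborhood of a single point of $Orb_{Y_n}(x_n)$, I produce $Z_n$ arbitrarily $C^1$-close to $Y_n$ possessing a non-hyperbolic periodic orbit $Q_n$ inside an arbitrarily small neighborhood of $P_n\cup Orb_{Y_n}(x_n)$. Because $P_n\cup Orb_{Y_n}(x_n)$ accumulates on $\Lambda$, for all large $n$ one has both $Z_n\in\cU_0$ and $Q_n\subset U_0$, contradicting the star condition and establishing the lemma.

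The main obstacle is the unfolding step: producing a non-hyperbolic periodic orbit by a $C^1$-small perturbation of a flow with a homoclinic tangency. For diffeomorphisms this is classical (Newhouse); the flow version is obtained by combining a Franks' lemma for flows along a well-chosen orbit segment with an adjustment of the Poincar\'e holonomy across the tangent intersection so that a modulus-one eigenvalue appears in the normal bundle of a nearby periodic orbit. The care required is that the support of the perturbation be confined to a tiny neighborhood of a single point, so that $Q_n$ stays close to $P_n\cup Orb_{Y_n}(x_n)$ and hence remains inside $U_0$ for all large $n$.
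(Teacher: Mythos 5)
Your overall strategy --- extract a sequence of tangencies accumulating on $\Lambda$, unfold each to produce a non-hyperbolic periodic orbit in a small neighborhood of $P_n\cup Orb_{Y_n}(x_n)$, and contradict the star hypothesis --- is correct in spirit, and it is in fact the same underlying principle that the paper exploits. But the paper takes a much shorter route: it observes that the closure $P\cup Orb(x)$ of a homoclinic tangency is a \emph{minimally non-hyperbolic set of simple type} in the terminology of Gan--Wen (the only proper compact invariant subset is the hyperbolic orbit $P$, and at the tangency point $x$ the decomposition $N(x)=N^+(x)\oplus N^-(x)$ fails precisely because $\pi(T_xW^s(P))$ and $\pi(T_xW^u(P))$ are not in general position), and then cites \cite[Lemma~6.1]{GW2}, which already says that a star set has a neighborhood in $M$ and a $C^1$-neighborhood of $X$ in which no such set occurs. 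That lemma encapsulates exactly the ``tangency $\Rightarrow$ nearby non-hyperbolic periodic orbit'' reasoning you are reconstructing from scratch.

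The step you flag as ``the main obstacle'' is where your write-up is genuinely incomplete. A Franks-type perturbation as in \cite[Theorem~A.1]{BGV} only modifies the tangent flow along a fixed finite orbit segment; it cannot by itself produce a periodic orbit out of a homoclinic orbit. To make the unfolding rigorous one needs (at least) two distinct ingredients: first a perturbation near the tangent point that creates transverse homoclinic intersections and hence a horseshoe of periodic orbits shadowing $P\cup Orb(x)$; second, the observation that because the original intersection is tangent, the linear Poincar\'e flow over these shadowing orbits cannot carry a uniform $l$-dominated splitting of index $\ind(P)$ (the angle between the continuations of $N^s$ and $N^u$ collapses at the return near $x$), which by a Ma\~n\'e-type or \cite{BGV}-type argument allows a further $C^1$-small derivative perturbation producing a periodic orbit with a normal eigenvalue of modulus exactly one. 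Only after both steps is the non-hyperbolic orbit $Q_n$ in hand, and one must then still verify that all perturbations are supported in a small neighborhood of $P_n\cup Orb_{Y_n}(x_n)$ so that $Q_n\subset U_0$. None of this is impossible, and it is essentially the content of the Gan--Wen lemma; but as written your ``single Franks perturbation near one point'' does not carry the load, and you should either supply the two-stage argument sketched above or simply invoke \cite[Lemma~6.1]{GW2} as the paper does.
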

\begin{proof}
A homoclinic tangency is in fact a minimally non-hyperbolic set of simple type in \cite{GW2}. Recall that a non-singular compact invariant set $\Gamma$ is said to be a \emph{minimally non-hyperbolic set} if $\Gamma$ is not hyperbolic, but any proper compact invariant subset of $\Gamma$ is hyperbolic. A minimally non-hyperbolic set $\Gamma$ is of \emph{simple} type, if there is a point $x\in\Gamma$ such that 
\begin{itemize}
\item $\omega(x)$ and $\alpha(x)$ are all proper subsets of $\Gamma$;
\item $N(x)=N^+(x)\oplus N^-(x)$ is not true.
\end{itemize}
Note that
$$N^\pm(x)=\{v\in N(x):~\lim_{t\to\pm\infty}\|\psi_t(v)\|=0\}.$$
By \cite[Lemma 6.1]{GW2}, if $\Lambda$ is star, then there are a neighborhood $U$ of $\Lambda$ and a neighborhood $\cU$ of $X$ such that $Y\in\cU$ has no minimally non-hyperbolic set of simple type in $U$. Hence $Y\in\cU$ has no homoclinic tangencies in $U$.
\end{proof}

}

\begin{proof}[The proof of Theorem~\ref{Thm:improveZGW08}]
By Lemma~\ref{Lem:starimplies-farfrom}, if $\Lambda$ is star, then there are a neighborhood $U$ of $\Lambda$ and a neighborhood $\cU$ of $X$ such that $X$ is far away from homoclinic tangencies in $U$. Thus by Theorem \ref{Thm:uniqueindex} we can see that if $\Lambda$ is robustly transitive and star, then all singularity in $\Lambda$ should be hyperbolic and have a common index (note here in the proof of Theorem \ref{Thm:uniqueindex} we just need consider the orbit near $\Lambda$).

\bigskip

It remains to show that $X$ is strongly homogenous.
\begin{Claim}
For any singularity $\sigma\in\Lambda$, the saddle value of $\sigma$ can not be zero.
\end{Claim}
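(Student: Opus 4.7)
The plan is to assume $I(\sigma)=0$ and derive a contradiction by producing, after arbitrarily small perturbation, two hyperbolic periodic orbits of consecutive indices in the continuation of $\Lambda$, and then using the star assumption to obstruct their coexistence via a heterodimensional-cycle argument.

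First, apply Lemma~\ref{Lem:saddlevalueperturbation} to $X$ and symmetrically to $-X$ to obtain $C^1$-small local perturbations $Y^-,Y^+\in\cU$ of $X$, supported near $\sigma$, with $I(\sigma_{Y^-},Y^-)<0$ and $I(\sigma_{Y^+},Y^+)>0$, while preserving the robust transitivity of $\Lambda_{Y^\pm}$. By Lemma~\ref{Lem:periodiclimitfromsingularity}, $\Lambda_{Y^-}$ contains an ${\rm Ind}(\sigma)$-periodic limit and $\Lambda_{Y^+}$ contains an $({\rm Ind}(\sigma)-1)$-periodic limit. Item~3 of Proposition~\ref{Pro:generic}, combined with the star property (which makes the produced periodic orbits automatically hyperbolic), supplies hyperbolic periodic orbits $P^-$ of index ${\rm Ind}(\sigma)$ and $P^+$ of index ${\rm Ind}(\sigma)-1$ for vector fields arbitrarily $C^1$-close to $X$.

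Next, since $P^\pm$ are hyperbolic and disjoint from $\sigma$, their continuations persist under further $C^1$-small perturbations near $\sigma$. Starting from a vector field $Y$ close to $X$ that already contains $P^+$ and satisfies $I(\sigma_Y,Y)>0$, one more local change of the saddle value (again by Lemma~\ref{Lem:saddlevalueperturbation}) pushes $I$ below zero without disturbing $P^+$, and the previous step then yields a hyperbolic periodic orbit $P^-$ of index ${\rm Ind}(\sigma)$ in the perturbed continuation. Hence there is a vector field $W\in\cU$ arbitrarily close to $X$ whose continuation $\Lambda_W$ contains hyperbolic periodic orbits of both indices ${\rm Ind}(\sigma)-1$ and ${\rm Ind}(\sigma)$.

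Finally, approximating $W$ by a generic $W'\in\cR$ of Proposition~\ref{Pro:generic}, Items~2 and~6 of that proposition place $P^-$ and $P^+$ in a common relative homoclinic class inside $\Lambda_{W'}$. The connecting lemma (Lemma~\ref{Lem:connectinglemma}), applied along the stable and unstable manifolds of $P^\pm$, creates a codimension-one heterodimensional cycle between them. A vector-field adaptation of the Bonatti--D\'\i az unfolding (using the flow analog of Franks' Lemma, \cite[Theorem~A.1]{BGV}) then perturbs this cycle into a non-hyperbolic periodic orbit inside the isolating neighborhood $U$ of $\Lambda$, contradicting the star property of $X$ and establishing the claim. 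The main obstacle is this last step: controlling the unfolding so that the resulting non-hyperbolic periodic orbit remains in the prescribed neighborhood $U$; the analogous diffeomorphism construction is in \cite{BDP}, and its translation to flows proceeds via a cross-section argument combined with the flow Franks lemma, while keeping track of the dominated splitting supplied by Proposition~\ref{Pro:dominatedsplitting2}.
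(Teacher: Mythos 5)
Your first two paragraphs match the paper's argument closely: perturb the saddle value to each sign while keeping the perturbation localized near $\sigma$ and preserving nearby hyperbolic periodic orbits, and use Lemma~\ref{Lem:periodiclimitfromsingularity} (together with Shilnikov's bifurcation) to extract periodic orbits of indices ${\rm Ind}(\sigma)-1$ and ${\rm Ind}(\sigma)$ that coexist in $\Lambda_W$ for some $W$ arbitrarily $C^1$-close to $X$. Where you diverge is the final step. The paper derives the contradiction immediately by invoking \cite[Lemma 1.6]{LGW}: for a star set containing a regular periodic orbit, the star property forces strong homogeneity, i.e.\ all periodic orbits in a neighborhood have the same index. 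Since $\Lambda_W$ is still star (its isolating neighborhood and the star property are robust) and now contains hyperbolic periodic orbits of two different indices, this is an outright contradiction. No heterodimensional-cycle construction is needed.

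By contrast, you propose to manufacture a heterodimensional cycle between $P^+$ and $P^-$ via the connecting lemma and then unfold it, following a flow analog of Bonatti--D\'iaz, to produce a non-hyperbolic periodic orbit inside $U$. This is essentially re-proving the content of \cite[Lemma 1.6]{LGW} rather than using it, and it introduces exactly the technical difficulty you flag yourself: controlling the unfolding so the non-hyperbolic orbit stays in the isolating neighborhood. That unfolding argument for flows (with the dominated splitting of Proposition~\ref{Pro:dominatedsplitting2} and the flow Franks lemma) is genuinely delicate and is not carried out in your sketch, so as written the proof has a gap at precisely the step you identify as the main obstacle. Replacing that entire last paragraph with the one-line invocation of \cite[Lemma 1.6]{LGW}, as the paper does, both closes the gap and shortens the argument considerably. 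A further minor simplification: since $I(\sigma)=0$, Lemma~\ref{Lem:periodiclimitfromsingularity} already applies in both directions to $X$ itself, so the preliminary perturbations to $Y^\pm$ to change the sign of the saddle value are only needed once (to flip the sign while a previously produced orbit persists), which is how the paper organizes the bookkeeping via the $\delta$ from Lemma~\ref{Lem:saddlevalueperturbation}.
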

\begin{proof}

 Assume on the contrary that there exists $\sigma\in\Lambda$ with $I(\sigma)=0$.

 Now we choose a neighborhood $\cU$ of $X$ such that $\Lambda$ has its robust continuation and star in $\cU$.
 For this $\cU$, one can find $\delta>0$ by Lemma~\ref{Lem:saddlevalueperturbation}.

 By Lemma \ref{Lem:periodiclimitfromsingularity} we can find a perturbation $Y_1$ close to $X$ such that
 \begin{itemize}
 \item $Y_1$ contains a hyperbolic periodic orbit $P$ with ${\rm Ind}(P)={\rm Ind}(\sigma)-1$.

 \item $I(\sigma_{Y_1},Y_1)$ is $\delta$-close to $I(\sigma,X)=0$, i.e., $|I(\sigma_{Y_1},Y_1)|<\delta$.

 \end{itemize}

For $Y_1$, we can make another perturbation in a small neighborhood of $\sigma$ by apply Lemma \ref{Lem:saddlevalueperturbation} to get $Y_2\in\mathcal{U}$ such that $P$ is still a periodic orbit of $Y_2$ with ${\rm Ind}(P)={\rm Ind}(\sigma)-1$ but the saddle value $I(\sigma)<0$. By applying Lemma \ref{Lem:periodiclimitfromsingularity} again, we can get a vector field $Y$ arbitrarily close to $Y_2$ with a periodic orbit $Q$ of index ${\rm Ind}(\sigma)$. If the perturbation is small enough, the continuation of $P$ exists with index ${\rm Ind}(\sigma)-1$. Thus we get a contradiction to the star condition by \cite[Lemma 1.6]{LGW}. We complete the proof of the Claim.
\end{proof}

Without loss of generality, now we assume that there exists a singularity $\sigma\in\Lambda$ with $I(\sigma)>0$. Then we can take a neighborhood $\mathcal{V}\subset\mathcal{U}$ such that for any $Z\in\mathcal{V}$, 
\begin{itemize}
\item the continuation $\sigma_Z$ of $\sigma$ with respect to $Z$ has the positive saddle value;

\item the set $\Lambda_Y$ is star for any $Y\in\cV$.
\end{itemize}

We will show that for every $Y\in\mathcal{V}$, the periodic orbit of $Y$ in $U$ should have index ${\rm Ind}(\sigma)-1$. We will prove by contradiction and assume that there exists $Y\in\mathcal{V}$ such that $Y$ has a hyperbolic periodic orbit $Q$ such that the index of $Q$ is not ${\rm Ind}(\sigma)-1$. 
One has that
\begin{itemize}
\item  By Lemma \ref{Lem:periodiclimitfromsingularity} we can find $Y'$ arbitrarily close to $Y$ such that $Y'$ contains a hyperbolic periodic orbit $P$ of index ${\rm Ind}(\sigma)-1$.

\item The continuation of $Q$ is still a hyperbolic periodic orbit of $Y'$ and the index of $Q$ is not ${\rm Ind}(\sigma)-1$.

\end{itemize}
%
% By Lemma \ref{Lem:periodiclimitfromsingularity} we can find $Y'$ arbitrarily close to $Y$ such that $Y'$ contains a periodic orbit of index ${\rm Ind}(\sigma)-1$. 
One gets a contradiction by applying \cite[Lemma 1.6]{LGW} to the star set $\Lambda_{Y'}$. This ends the proof of Theorem \ref{Thm:improveZGW08}.
%
%
%\bigskip
%
%{\color{green}The Lemma 1.6 of \cite{LGW} says that if $X$ is star in a neighborhood $U$ and contains a hyperbolic periodic orbit, then $X$ is strongly homogenous in $U$. By Item 5 of Proposition \ref{Pro:generic} we know that generically an isolated transitive set contains hyperbolic periodic orbit, hence we can take a neighborhood $\mathcal{U}$ of $X$ and a neighborhood $U$ of $\Lambda$ such that for any $Y\in\mathcal{U}\cap \mathcal{R}$, the periodic orbit of $Y$ contained in $U$ have a common index. }
\end{proof}

\section{Multisingular partial hyperbolicity}\label{Sec:mutli-partial}

For compact invariant sets with singularities, we would like to study the weak form of hyperbolicity. However, it is not always reasonable to consider the tangent flow:
\begin{itemize}
\item By the suspension of a robust transitive diffeomorphism by Bonatti-Viana \cite{BoV00}, there is a robustly transitive vector field without any dominated splitting of the tangent flow.

\item By a recent example of Bonatti-da Luz, even for 5-dimensional vector field, there are star vector fields that cannot be characterized by singular hyperbolicity. They use multi-singular hyperbolicity \cite{BdL17} to characterize their example. The notion of multi-singular hyperbolicity is defined by using the linear Poincar\'e flow and the tangent flow along the vector field.

\end{itemize}

Thus, we need more general notion to understand vector fields that are away from homoclinic tangencies. Following the idea of multi-singular hyperbolicity by Bonatti-da Luz \cite{BdL17}, here we give a notion of {\it multisingular partially hyperbolicity}. Firstly we give the notion of reparametrizing cocycle following \cite{BdL17}. Let $\Lambda$ be a compact invariant set of vector field $X$. Let $\sigma\in\Lambda$ be a hyperbolic singularity of $X$, we say a positive continuous function $h: (\Lambda\setminus Sing(X))\times \mathbb{R}\to\mathbb{R}^+$ is a {\it pragmatical cocyle} (associated to $\sigma$) if $h(t+s,x)=h(t,x)\cdot h(s,\varphi_t(x))$ for all $s,t\in\mathbb{R}$ and $x\in\Lambda\setminus Sing(X)$ and there is an isolated neighborhood $V_\sigma$ of $\sigma$ such that the following conditions are satisfied:
\begin{enumerate}
\item if $x$ and $\varphi_t(x)$ are both in $V_\sigma$, then $h(x,t)=\|\Phi_t|_{<X(x)>}\|$;
\item if $x$ and $\varphi_t(x)$ are both outside $V_\sigma$, then $h(x,t)=1$.
\end{enumerate}
One can see \cite{BdL17} for the existence of a pragmatic cocycle associated to $\sigma$. A {\it reparametrizing cocycle } is a finite product of powers of pragmatic cocycles. As usual, we use $h_t(x)$ to denote $h(x,t)$ for a reparametrizing cocycle.

\begin{Definition}\label{Def:multisingular-partial}
Let $\Lambda$ be a compact invariant set. We say that $\Lambda$ is
\emph{multisingular partially hyperbolic}, if $N_{\Lambda\setminus{\
Sing}(X)}$ has a dominated splitting $N^{cs}\oplus N_1\oplus N^{cu}$
w.r.t. the linear Poincar\'e flow, and there exist two reparametrizing cocycles $h_t^s:\Lambda\setminus Sing(X)\to\mathbb{R}^+$ and $h_t^u:\Lambda\setminus Sing(X)\to\mathbb{R}^+$ such that,
\begin{enumerate}

\item $h_t^s\cdot\psi_t|_{N^{cs}}$ is uniformly contracting, that is, there are constants $C>0$ and $\lambda>0$ such that for any
regular point $x\in\Lambda$ and any $t>0$, one has
$$\|h_t^s(x)\cdot\psi_t|_{N^{cs}(x)}\|\leq C {\rm e}^{-\lambda t};$$

\item $h_t^u\cdot\psi_t|_{N^{cu}}$ is uniformly expanding, that is, there are constants $C>0$ and $\lambda>0$ such that for any
regular point $x\in\Lambda$ and any $t>0$, one has
$$\|h_{-t}^u(x)\cdot\psi_{-t}|_{N^{cu}(x)}\|\leq C {\rm e}^{-\lambda t}.$$
\end{enumerate}
\end{Definition}

\begin{Remark}
Here we do not use the extended linear Poincar\'e flow from \cite{LGW} which was used in \cite{BdL17}. The advantage by using extended linear Poincar\'e flow in \cite{BdL17} is that the notion is robust under  perturbation.

\end{Remark}

Here we verify that every non-trivial isolated transitive set is multisingular partially hyperbolic for a $C^1$ generic vector field away from ones with a homoclinic tangency.

\begin{Theorem}\label{Thm:multisingularpartiallyhyperbolic}
For any $X\in\mathcal{R}$ which is far away from ones with a homoclinic tangency,
every isolated non-trivial transitive set of $X$ admits a multisingular partially
hyperbolic splitting $N^{cs}\oplus N_1\oplus \cdots\oplus N_k\oplus
N^{cu}$, where $\dim N_i=1$ for any $1\le i\le k$.
\end{Theorem}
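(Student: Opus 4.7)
The plan is to combine Theorem~\ref{Thm:uniqueindex} and Proposition~\ref{Pro:dominatedsplitting2} to produce the dominated splitting, and then construct reparametrizing cocycles from pragmatic cocycles at each singularity in the spirit of~\cite{BdL17}. Fix $X\in\mathcal{R}$ far from $\overline{HT}$ and let $\Lambda$ be an isolated non-trivial transitive set. By items~2, 3, 5, and 6 of Proposition~\ref{Pro:generic}, $\Lambda$ is robustly chain transitive and equals the relative homoclinic class of some hyperbolic periodic orbit of index $i$, so $\Lambda$ is an $i$-periodic limit. Applying Theorem~\ref{Thm:uniqueindex} to robustly transitive perturbations, and using Lemma~\ref{Lem:periodiclimitfromsingularity} to control the sign of the saddle value, every singularity of $\Lambda$ is hyperbolic of index $i$ or $i+1$.

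For the splitting, Lemma~\ref{Lem:limit-dominated} gives a dominated splitting of index $i$ on $\tilde\Lambda$ for $\tilde\psi_t$. I iteratively refine this splitting by reapplying Proposition~\ref{Pro:dominatedsplitting2} to the restricted dynamics on each remaining subbundle: on periodic orbits close to $\Lambda$, the central bundle has dimension at most one, and the strongly contracting/expanding parts admit further refinement as the inner restricted dynamics are again far from $\overline{HT}$. Taking Hausdorff limits (item~5 of Proposition~\ref{Pro:generic}) and extending to $\tilde\Lambda$ via Proposition~\ref{Pro:extendedLPF}, one obtains the finest dominated splitting $N^{cs}\oplus N_1\oplus\cdots\oplus N_k\oplus N^{cu}$. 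The one-dimensionality of each middle $N_j$ is the subtle point: if $\dim N_j\ge 2$, then by Proposition~\ref{Pro:ergodicclosinglemma} together with a Pliss/shadowing argument applied to an ergodic measure on $N_j$, there would exist periodic orbits close to $\Lambda$ whose finest dominated splitting has a middle bundle of dimension at least two, contradicting Proposition~\ref{Pro:dominatedsplitting2}.

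For the reparametrization, associate to each hyperbolic $\sigma\in\Lambda$ its pragmatic cocycle $h_\sigma$ and set $h_t^s(x)=\prod_\sigma h_\sigma(x,t)^{\alpha_\sigma}$ with exponents $\alpha_\sigma\in\{0,1\}$ chosen according to the sign of the saddle value of $\sigma$, so that near $\sigma$ the product $h_t^s\cdot\psi_t|_{N^{cs}}$ absorbs the flow-direction magnitude and recovers the tangent-flow contraction on the stable bundle of $\sigma$; symmetrically define $h_t^u$. Uniform exponential contraction on $N^{cs}$ (and expansion on $N^{cu}$) then follows from the uniform rate on periodic orbits given by Proposition~\ref{Pro:dominatedsplitting2}(3), the density of periodic orbits in $\Lambda$ (Lemma~\ref{Lem:transitive-periodic-likt}), and continuity of the reparametrized cocycle on $\Lambda\setminus\mathrm{Sing}(X)$. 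The main obstacle will be the global coherence of the reparametrization: an orbit visiting several singularity neighborhoods in succession must accumulate the correct net cocycle contribution, and this is only possible because Theorem~\ref{Thm:uniqueindex} forces a common index across the singularities of $\Lambda$, permitting a compatible universal choice of exponents $\alpha_\sigma$.
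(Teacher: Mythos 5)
Your argument has several genuine gaps, the most serious of which is the invocation of Theorem~\ref{Thm:uniqueindex} to conclude that all singularities of $\Lambda$ have index $i$ or $i+1$. Theorem~\ref{Thm:uniqueindex} requires \emph{robust} transitivity, while Theorem~\ref{Thm:multisingularpartiallyhyperbolic} is stated for isolated transitive sets, which are generically only robustly \emph{chain}-transitive; the paper never assumes a unique index of singularities at this stage. The actual proof works with the full range $\alpha\le \mathrm{Ind}(\sigma)\le\beta+1$ (Lemma~\ref{Lem:indicesofsingularity}), where $\alpha$ and $\beta$ are the extremal indices of periodic limits converging into $\Lambda$. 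This also breaks your claim about one-dimensionality: the paper gets $\dim N_j=1$ not from ``iteratively refining'' Proposition~\ref{Pro:dominatedsplitting2} on restricted subbundles (the restricted dynamics is not a linear Poincar\'e flow, so that proposition does not apply to it), but from the observation that $\Lambda$ equals the relative homoclinic class of some periodic orbit of index $i$ for \emph{every} $i\in[\alpha,\beta]$ (Items~2, 3, 4 of Proposition~\ref{Pro:generic}), hence by Lemma~\ref{Lem:limit-dominated} carries a dominated splitting of index $i$ for each such $i$; intersecting these yields the finest splitting with $1$-dimensional middle bundles directly. Your fallback Pliss/shadowing argument does not establish that an ergodic measure on a $2$-dimensional $N_j$ would produce periodic orbits with a $2$-dimensional center eigenband in the sense of Proposition~\ref{Pro:dominatedsplitting2}: the approximating periodic orbits could have well-separated exponents on $N_j$.

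The cocycle construction also goes wrong in detail. You propose exponents $\alpha_\sigma\in\{0,1\}$, but the paper uses \emph{signed} exponents: $h_t^s$ is the product of the pragmatic cocycles over singularities of index $\alpha$ times the \emph{inverse} of the product over the remaining singularities, because near a singularity of index $\alpha$ the center direction is unstable (so one multiplies by $\|\Phi_t|_{\langle X\rangle}\|$), while near a singularity of index $>\alpha$ the center direction is stable (so one divides). With only exponents in $\{0,1\}$ the reparametrized cocycle cannot be uniformly contracting when singularities of both types are present. Finally, the assertion that ``uniform exponential contraction follows from the uniform rate on periodic orbits, density, and continuity'' skips the essential step: the paper verifies contraction by contradiction, using Lemma~\ref{Lem:contractingatsingularity} to exclude measures supported on $\bigcup_\sigma\tilde\Lambda_\sigma$, then applying the ergodic closing lemma (Proposition~\ref{Pro:ergodicclosinglemma}) to an ergodic measure witnessing non-contraction, producing a closing periodic orbit whose averaged contraction rate violates Lemma~\ref{Lem:uniformcontracting}. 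Without that measure-theoretic argument, one cannot pass from the uniform rate on periodic orbits to a uniform rate on all of $\Lambda\setminus\mathrm{Sing}(X)$, particularly for orbits spending arbitrarily long times near singularities.
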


Before we give the proof of Theorem \ref{Thm:multisingularpartiallyhyperbolic}, we take some preparations. In the following, we will always assume $X\in\mathcal{R}$ and $\Lambda$ is  an isolated non-trivial transitive set of $X$.

Since $\Lambda$ is transitive, applying Lemma~\ref{Lem:transitive-periodic-likt}, $\Lambda$ is the Hausdorff limit of some hyperbolic periodic orbit
of some index. Assume that $\alpha$ and $\beta$ be the minimal and maximal number among the
indices. That is,
$$\alpha=\min\{i: \exists Y_n \text { and periodic orbit } \gamma_n \text{ of } Y_n,  \text{ s.t. }, Y_n\to Y,$$
$$ Ind(\gamma_n)=i \text{ and } \gamma_n \text{ converges into } \Lambda  \},$$
$$\beta=\max\{i: \exists Y_n \text { and periodic orbit } \gamma_n \text{ of } Y_n,  \text{ s.t. }, Y_n\to Y,$$
$$ Ind(\gamma_n)=i \text{ and } \gamma_n \text{ converges into } \Lambda  \}.$$

Because $\Lambda$ is nontrivial, $\Lambda$ contains no sinks, then from Item 3 of Proposition \ref{Pro:generic} we know that $\Lambda$ contains no $0$-periodic limit, hence we have $\alpha>0$.  Symmetrically, we have $\beta<\dim M-1$.

\begin{Lemma}\label{Lem:indexcomplete} There is a residual set $\cR\subset\cX^1(M)$ with the following properties.
Let $X\in\mathcal{R}\setminus\overline{HT}$ and $\Lambda$ be an isolated non-trivial transitive set of $X$ and $\alpha, \beta$ be given as above. Then for any integer $i\in[\alpha, \beta]$, there is a dominated slitting $N_{\Lambda\setminus Sing(X)}=N^1\oplus N^2$ (w.r.t. $\psi_t$) of index $i$. As a consequence, we have a dominated splitting $N_{\Lambda\setminus Sing(X)}=N^{cs}\oplus N_1\oplus\cdots\oplus N_{\beta-\alpha}\oplus N^{cu}$ with $\dim N^{cs}=\alpha$ and $\dim N^{cu}=d-1-\beta$ and $\dim N_i=1$ for any $1\le i\le \beta-\alpha$.
\end{Lemma}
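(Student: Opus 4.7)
The plan is to show, for each $i\in[\alpha,\beta]$, that $\Lambda$ contains an $i$-periodic limit, then to upgrade this to a uniformly dominated splitting of index $i$ on all of $\Lambda\setminus\Sing(X)$ via Lemma~\ref{Lem:splittingonperiodic} and a Hausdorff-limit argument, and finally to refine these splittings over $i\in[\alpha,\beta]$ into the flag stated in the lemma. The main obstacle is the intermediate indices in the first step.

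For the endpoint cases $i=\alpha,\beta$ the first step is immediate: by the very definition of $\alpha$ and $\beta$, $\Lambda$ is itself an $\alpha$- and a $\beta$-periodic limit. Moreover, Item~3 of Proposition~\ref{Pro:generic} supplies periodic orbits of $X$ itself of these indices Hausdorff-converging to $\Lambda$, and since any such orbit is eventually contained in a fixed isolating neighborhood $U$ of $\Lambda$, the isolation property forces them to lie in $\Lambda$; pick $P_\alpha,P_\beta\subset\Lambda$ among them. For intermediate $i\in(\alpha,\beta)$, choose a smaller isolating neighborhood $U'$ with $\overline{U'}\subset U$. Item~2 of Proposition~\ref{Pro:generic} applied to $U'$ yields $H(P_\alpha,U')=H(P_\beta,U')=\Lambda$, and Item~4 of Proposition~\ref{Pro:generic} applied to the pair $(U',U)$ then produces a hyperbolic periodic orbit $P_i$ of $X$ of index $i$ with $P_\alpha,P_\beta\subset H(P_i,U)$. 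In particular $P_i\subset U$, and the isolation of $U$ forces $P_i\subset\Lambda$. Thus $\Lambda$ contains a hyperbolic periodic orbit of index $i$, which is trivially an $i$-periodic limit.

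Once $\Lambda$ is known to contain an $i$-periodic limit, Lemma~\ref{Lem:splittingonperiodic} furnishes a $C^1$ neighborhood $\cU_i$ of $X$, a neighborhood $W_i$ of $\Lambda$, and a constant $l_i>0$ such that every periodic orbit of any $Y\in\cU_i$ lying in $W_i$ carries an $l_i$-dominated splitting of index $i$ for $\psi_t^Y$. By Lemma~\ref{Lem:transitive-periodic-likt}, $\Lambda$ is the Hausdorff limit of some periodic orbits $R_n$ of vector fields $Y_n\to X$; these eventually lie in $W_i$ with $Y_n\in\cU_i$, so each $R_n$ carries an $l_i$-dominated splitting of index $i$. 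Exactly as in the proof of Lemma~\ref{Lem:limit-dominated}, uniform $l_i$-domination together with the continuity of $\psi_t^{Y_n}\to\psi_t^X$ at regular points and the uniqueness of the dominated splitting passes this to an $l_i$-dominated splitting $N_{\Lambda\setminus\Sing(X)}=E_i\oplus F_i$ of index $i$ for $\psi_t^X$.

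Finally, one has for every $i\in[\alpha,\beta]$ a dominated splitting $E_i\oplus F_i$ of index $i$ on $\Lambda\setminus\Sing(X)$. Uniqueness of dominated splittings forces the flag $E_\alpha\subset E_{\alpha+1}\subset\cdots\subset E_\beta$, so setting $N^{cs}=E_\alpha$, $N^{cu}=F_\beta$, and $N_j=E_{\alpha+j}\cap F_{\alpha+j-1}$ for $j=1,\ldots,\beta-\alpha$ delivers, via the equivalence between dominated splittings and their refinements recalled in the ``Dominated splittings'' subsection, the required splitting $N^{cs}\oplus N_1\oplus\cdots\oplus N_{\beta-\alpha}\oplus N^{cu}$ with $\dim N^{cs}=\alpha$, $\dim N^{cu}=\dim M-1-\beta$, and $\dim N_j=1$ for all $j$. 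The crux of the argument is Step~1 for intermediate indices: one must combine the two nested isolating neighborhoods with Item~4 of Proposition~\ref{Pro:generic} so that the newly produced periodic orbit $P_i$ ends up inside $\Lambda$ rather than merely in some outer neighborhood.
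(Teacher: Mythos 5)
Your proposal is correct and follows essentially the paper's route: use Items 3, 2, and 4 of Proposition~\ref{Pro:generic} together with the isolation property to manufacture a hyperbolic periodic orbit of each intermediate index $i$ inside $\Lambda$, then promote this to a dominated splitting of index $i$ on $\Lambda\setminus\Sing(X)$, and finally refine over $i\in[\alpha,\beta]$ into the stated flag. The only (inessential) deviation is the final promotion step: the paper, having found $P\subset\Lambda$ of index $i$, invokes Item~2 once more to write $\Lambda=H(P,U)$, concludes that $\Lambda$ itself is an $i$-periodic limit, and applies Lemma~\ref{Lem:limit-dominated} directly; you instead pass through Lemma~\ref{Lem:splittingonperiodic} to get uniform $l_i$-domination over nearby periodic orbits and then re-run the limit argument of Lemma~\ref{Lem:limit-dominated} by hand. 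Your detour is harmless (it avoids needing the standard but unstated fact that a relative homoclinic class is Hausdorff-approximated by periodic orbits of the same index), and your explicit handling of the two nested isolating neighborhoods required by Item~4 spells out something the paper leaves implicit.
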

\begin{proof}
 Note that $\Lambda$ is isolated with an isolated neighborhood $U$, this implies that if there is a periodic orbit in $U$, then this periodic orbit is contained in $\Lambda$. Thus by Item 3 of Proposition \ref{Pro:generic} we know that $\Lambda$ contains a periodic orbit $P_1$ of index $\alpha$ and contains a periodic orbit $P_2$ of index $\beta$. Then by Item 4 of Proposition \ref{Pro:generic} we know that for any $i\in[\alpha, \beta]$, there exists a periodic orbit $P$ of index $i$. Then by Item 2 of Proposition \ref{Pro:generic} we know that $\Lambda=H(P,U)$ hence an $i$-periodic limit. Thus we know that for any $i\in[\alpha, \beta]$, $\Lambda$ is an $i$-periodic limit. By Lemma~\ref{Lem:limit-dominated}, since $X$ is far away from ones with a homoclinic tangency, we know that for any $i\in[\alpha, \beta]$, there is a dominated slitting $N_{\Lambda\setminus Sing(X)}=N^1\oplus N^2$ of index $i$. As a consequence we have a dominated splitting $N_{\Lambda\setminus Sing(X)}=N^{cs}\oplus N_1\oplus\cdots\oplus N_k\oplus N^{cu}$ with $\dim N^{cs}=\alpha$ and $\dim N^{cu}=d-1-\beta$ and $\dim N_i=1$ for any $1\le i\le k$.
\end{proof}

\begin{Lemma}\label{Lem:uniformcontracting} There is a residual set $\cR\subset\cX^1(M)$ with the following properties.
Let $X\in\mathcal{R}\setminus\overline{HT}$ and $\Lambda$ be an isolated non-trivial transitive set of $X$ and $N^{cs},N^{cu}$ be the sub-bundles given in Lemma~\ref{Lem:indexcomplete}. Then there exist constants $l'>0, \eta'>0$ such that for any periodic point $p\in\Lambda$ of $X$, one has
\begin{equation*}\limsup_{n\to+\infty}\frac{1}{n}\sum_{i=0}^{n-1}\log \|\psi_{l'}|_{N^{cs}(\varphi_{il'}(p))}\|<-\eta'\end{equation*}
\begin{equation*}\limsup_{n\to+\infty}\frac{1}{n}\sum_{i=0}^{n-1}\log \|\psi_{-l'}|_{N^{cu}(\varphi_{-il'}(p))}\|<-\eta'\end{equation*}
\end{Lemma}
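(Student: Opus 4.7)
My plan is to argue by contradiction, and turn any failure of uniform contraction on $N^{cs}$ into the existence of an $(\alpha-1)$-periodic limit of $X$ inside $\Lambda$, which contradicts the minimality of $\alpha$. The corresponding bound on $N^{cu}$ will then follow immediately by reversing the flow, so I focus on $N^{cs}$.

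First I will set $l':=l$, where $l$, together with constants $C>1$, $\eta>0$, $a>0$ and a $C^1$-neighborhood $\mathcal{V}$ of $X$, is furnished by Proposition~\ref{Pro:dominatedsplitting2}, after shrinking the isolating neighborhood $U$ of $\Lambda$. If the conclusion fails, then by driving $\eta'\to 0$ I can extract a sequence of periodic orbits $P_n\subset\Lambda$ of $X$ along which the Birkhoff average of $\log\|\psi_{l'}|_{N^{cs}}\|$ tends to $0$ from below. Since each $P_n$ is itself a periodic limit of $X$ contained in $\Lambda$, its index $j_n$ lies in $[\alpha,\beta]$. On $P_n$ the two dominated splittings $N^{cs}\oplus N_1\oplus\cdots\oplus N^{cu}$ (from Lemma~\ref{Lem:indexcomplete}) and $N^{ss}\oplus N^c\oplus N^{uu}$ (from Proposition~\ref{Pro:dominatedsplitting2}) must be compatible: by uniqueness of dominated splittings of a prescribed dimension, $N^{cs}|_{P_n}$ is the direct sum of the $\alpha$ most contracted eigendirections $E_1,\ldots,E_\alpha$ of the return map, with Lyapunov exponents $\lambda_1\le\cdots\le\lambda_\alpha<0$. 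Each $\lambda_i$ either satisfies $\lambda_i\le -a$ (contributing to $N^{ss}|_{P_n}$) or lies in $(-a,a)$ (in which case $E_i=N^c|_{P_n}$, and there is at most one such index). My failure assumption forces $\lambda_\alpha(P_n)\in(-a,0)$ with $\lambda_\alpha(P_n)\to 0^-$; consequently $N^c|_{P_n}\subset N^{cs}|_{P_n}$, $\dim N^{ss}|_{P_n}=\alpha-1$, and $P_n$ has index exactly $\alpha$.

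To conclude I will apply the Franks-type lemma for flows \cite[Theorem A.1]{BGV} along each $P_n$ so as to flip the single central exponent to a positive value. Because $|\lambda_\alpha(P_n)|\to 0$, the needed perturbation is arbitrarily $C^1$-small, supported in a narrow tube around $P_n\subset U$, and it preserves $P_n$ as a hyperbolic periodic orbit now of index $\alpha-1$ for some $X_n\to X$; the uniform $a$-gap separating $N^{ss}$ and $N^{uu}$ ensures that only the central direction changes sign. Any Hausdorff accumulation set $\Gamma$ of $\{P_n\}$ is compact, $X$-invariant, and contained in $\overline{U}$, hence in $\Lambda$ by the isolating property, producing an $(\alpha-1)$-periodic limit of $X$ inside $\Lambda$ and contradicting the minimality of $\alpha$. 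The delicate step is the last one: I must calibrate the Franks perturbation so that its $C^1$-size is controlled by $|\lambda_\alpha(P_n)|$ rather than by the ambient constants, and verify via standard cone-field arguments that the perturbed orbit remains in $U$ while inheriting a hyperbolic splitting of index $\alpha-1$; the quantitative estimates in Proposition~\ref{Pro:dominatedsplitting2} make both of these routine once the index analysis above has been carried out.
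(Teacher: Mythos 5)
Your approach takes a genuinely different route from the paper's. The paper argues directly: for long-period orbits it invokes the product estimate of Proposition~\ref{Pro:dominatedsplitting2} on $N^{ss}$ together with $N^{cs}(P)\subset N^s(P)$, and for orbits with bounded period it uses compactness of $P_{N_0}(\Lambda)$ plus hyperbolicity. There is no contradiction argument and no appeal to Franks' lemma or to the minimality of $\alpha$ in the paper's proof. You instead argue by contradiction: a degenerating sequence of periodic orbits is produced, the weakest stable exponent is flipped by a Franks-type perturbation, and the resulting $(\alpha-1)$-periodic limit contradicts the definition of $\alpha$. In fact your route confronts a point that the paper's text passes over quickly: Proposition~\ref{Pro:dominatedsplitting2} bounds $\prod\|\psi_l|_{N^{ss}}\|$, but when ${\rm Ind}(P)=\alpha$ and the single center eigenvalue is contracting one has $N^{cs}(P)=N^{ss}(P)\oplus N^c(P)\supsetneq N^{ss}(P)$, so transferring that bound to $N^{cs}$ genuinely requires a separate reason why $\lambda_\alpha(P)$ cannot accumulate at $0$; your Franks-plus-minimality argument supplies exactly that reason, and is a perfectly reasonable alternative (indeed, it uses the structure of the situation more directly than the paper's citation of Proposition~\ref{Pro:dominatedsplitting2}).

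There is, however, a gap in your step from the failure of the conclusion to $\lambda_\alpha(P_n)\to 0^-$. With $l':=l$ fixed, failure gives $\limsup_n\frac{1}{n}\sum_i\log\|\psi_{l'}|_{N^{cs}(\varphi_{il'}(p))}\|\ge -\eta'_n$ along some $P_n$. By sub-multiplicativity that Birkhoff average is $\ge l'\lambda_\alpha(P_n)$, which bounds $\lambda_\alpha(P_n)$ from \emph{below}, not from above; the per-site norms can be inflated by bounded angle fluctuations between the sub-bundles, so a small Birkhoff average does not automatically force a small exponent. To close this you need the reverse inequality, obtained from the $l$-domination (for a dominating sub-bundle $F\subset N^{cs}$ with angle constant $K$, one gets $\|\psi_{l'}|_{N^{cs}(x)}\|\le K\,\|\psi_{l'}|_{F(x)}\|$), and then you must allow $l'$ to run over large multiples of $l$ so that the $(\log K)/l'$ error disappears; fixing $l'=l$ at the outset does not suffice. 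You should also say a word about orbits of bounded period (where Franks' lemma is not the right tool and the paper's compactness plus Kupka-Smale argument handles them cleanly). Once these points are attended to, the strategy is sound.
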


\begin{proof}
Let $P$ be a periodic orbit of $X$ in $\Lambda$ with hyperbolic splitting $N^s(P)\oplus N^u(P)$. By the choice of $\alpha$ we know that ${\rm Ind}(P)\geq \alpha$, then we have $N^{cs}(P)\subset N^s(P)$.
By Proposition \ref{Pro:dominatedsplitting2}, we can take $\eta>0$ and $N_0>0$ such that for any periodic orbit $P$ of $X$ with $\pi(P)\geq N_0$, one has
\begin{equation*}\prod_{i=0}^{[\pi(P)/l-1]}\|\psi_l|_{N^{cs}(\varphi_{il}(p))}\|< e^{-\eta [\pi(P)/l]},\end{equation*}
for any $p\in P$.

From the above estimation we know that for any periodic point $p\in\Lambda$ of $X$ with $\pi(p)\geq N_0$, one has
\begin{equation*}\limsup_{n\to+\infty}\frac{1}{n}\sum_{i=0}^{n-1}\log \|\psi_{l}|_{N^{cs}(\varphi_{il}(p))}\|<-\eta.\end{equation*}
This property still holds once we replace $l$ by one of it multiples. Hence we just need to find a positive integer $k$ and a constant $\eta'>0$ such that for any periodic point $p\in\Lambda$ of $X$ with $\pi(p)\leq N_0$, one has
\begin{equation*}\limsup_{n\to+\infty}\frac{1}{n}\sum_{i=0}^{n-1}\log \|\psi_{kl}|_{N^{cs}(\varphi_{ikl}(p))}\|<-\eta'\end{equation*}

Let
$$P_{N_0}(\Lambda)=\{p\in\Lambda: p \text{ is a periodic point with } \pi(p)\leq N_0\}.$$
Then $P_{N_0}(\Lambda)$ is a close subset of $\Lambda$ and hence a compact subset. For any $p\in P_{N_0}(\Lambda)$, since $\psi_t|_{N^{cs}({{\rm Orb}(p)})}$ is contracting (by the choice of $\alpha$), we can find $k(p)$ such that
$$\|\psi_{k(p)l}|_{N^{cs}(p)}\|\leq \frac{1}{2}.$$
 By the compactness of $P_{N_0}(\Lambda)$, we can find constants $\eta'>0$ and $C>1$ such that
$$\|\psi_{kl}|_{N^{cs}(p)}\|< C e^{-k\eta'}$$
for all $p\in P_{N_0}(\Lambda)$ and $k\geq 1$. Then we can find a large $k$ such that
$$\|\psi_{kl}|_{N^{cs}(p)}\|<  e^{-k\eta'}$$
for all $p\in P_{N_0}(\Lambda)$. Then we can see that $l'=kl$ and $\eta'$ satisfy the requirements of the lemma. Symmetrically, we can take $l'$ and $\eta'$ such that the estimation for the bundle $N^{cu}$ is satisfied.
\end{proof}

To simplify the notations, we still use $l,\eta$ to denote $l',\eta'$ in the above lemma since we can choose $l',\eta'$ keep all properties of $l,\eta$ we have listed in Proposition \ref{Pro:dominatedsplitting2}.

Now we analyse the singularities in $\Lambda$.

\begin{Lemma}\label{Lem:indicesofsingularity}
There is a residual set $\cR\subset\cX^1(M)$ with the following properties. Let $X\in\mathcal{R}$ and $\Lambda$ be an isolated non-trivial transitive set of $X$ and $\alpha, \beta$ be given as above. Then for any singularity $\sigma\in\Lambda$, we have $\alpha\leq {\rm Ind}(\sigma)\leq\beta+1$. Furthermore, if ${\rm Ind}(\sigma)=\alpha$ then $I(\sigma)<0$; if ${\rm Ind}(\sigma)=\beta+1$, then $I(\sigma)>0$.
\end{Lemma}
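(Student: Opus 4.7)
The plan is to derive everything from Lemma~\ref{Lem:periodiclimitfromsingularity} combined with the extremality of $\alpha$ and $\beta$ in the definitions above. First I would shrink $\mathcal{R}$ if necessary so that $X\in\mathcal{R}$ is Kupka--Smale (Item~1 of Proposition~\ref{Pro:generic}); then every singularity $\sigma\in\Lambda$ is automatically hyperbolic, and the saddle value $I(\sigma)$ is a well-defined real number. Since $I(\sigma)$ is either $\leq 0$ or $\geq 0$ (possibly both, if it is zero), Lemma~\ref{Lem:periodiclimitfromsingularity} always produces a periodic limit inside $\Lambda$: of index ${\rm Ind}(\sigma)$ when $I(\sigma)\le 0$, and of index ${\rm Ind}(\sigma)-1$ when $I(\sigma)\ge 0$. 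The entire proof just reads off what these indices are and compares them with $\alpha$ and $\beta$.

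For the two-sided bound $\alpha\leq {\rm Ind}(\sigma)\leq\beta+1$, I would argue by contradiction. Suppose first that ${\rm Ind}(\sigma)<\alpha$. If $I(\sigma)\le 0$, Lemma~\ref{Lem:periodiclimitfromsingularity} produces an ${\rm Ind}(\sigma)$-periodic limit in $\Lambda$, hence a fundamental sequence of periodic orbits of index ${\rm Ind}(\sigma)<\alpha$ accumulating into $\Lambda$, contradicting the minimality of $\alpha$. If instead $I(\sigma)\ge 0$, the same lemma produces an $({\rm Ind}(\sigma)-1)$-periodic limit, with index even smaller, so again a contradiction. Symmetrically, if ${\rm Ind}(\sigma)>\beta+1$ then ${\rm Ind}(\sigma)-1>\beta$, and whichever sign $I(\sigma)$ has, Lemma~\ref{Lem:periodiclimitfromsingularity} yields a periodic limit in $\Lambda$ of index strictly greater than $\beta$, contradicting the maximality of $\beta$.

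For the sign assertions, the argument is a single additional application of the same lemma. If ${\rm Ind}(\sigma)=\alpha$ but $I(\sigma)\ge 0$, then Lemma~\ref{Lem:periodiclimitfromsingularity} provides an $(\alpha-1)$-periodic limit inside $\Lambda$, contradicting the minimality of $\alpha$; hence $I(\sigma)<0$. Symmetrically, if ${\rm Ind}(\sigma)=\beta+1$ but $I(\sigma)\le 0$, the lemma yields a $(\beta+1)$-periodic limit inside $\Lambda$, contradicting the maximality of $\beta$; hence $I(\sigma)>0$. This completes the scheme.

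The proof is essentially bookkeeping once Lemma~\ref{Lem:periodiclimitfromsingularity} is available, so there is no serious obstacle. The one subtle point I want to be careful about is making sure that the hyperbolicity of every $\sigma\in\Lambda$ is guaranteed intrinsically so that $I(\sigma)$ has meaning; this is precisely what the Kupka--Smale property in $\mathcal{R}$ gives us, and it is also what allows the invocation of Lemma~\ref{Lem:periodiclimitfromsingularity}, whose hypothesis requires $\sigma$ to be hyperbolic and $\Lambda$ to be non-trivial transitive (both in our standing assumptions). No use of the ``far from $\overline{HT}$'' hypothesis is needed for this particular lemma.
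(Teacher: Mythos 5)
Your proposal is correct and follows the same route as the paper: everything reduces to Lemma~\ref{Lem:periodiclimitfromsingularity} plus the extremality of $\alpha$ and $\beta$. The only cosmetic difference is that you phrase the index bound as a proof by contradiction, while the paper argues directly that the periodic limit produced by Lemma~\ref{Lem:periodiclimitfromsingularity} has index in $[\alpha,\beta]$, which forces ${\rm Ind}(\sigma)\in[\alpha,\beta+1]$ with the stated sign restrictions at the endpoints; your observation that Kupka--Smale (Item~1 of Proposition~\ref{Pro:generic}) is what guarantees hyperbolicity of $\sigma$, and that the ``far from $\overline{HT}$'' hypothesis is unused here, matches the paper's standing assumptions for this section.
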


\begin{proof}
This is a direct consequence of Lemma \ref{Lem:periodiclimitfromsingularity}. Let $\sigma\in\Lambda$ be a singularity. By Lemma \ref{Lem:periodiclimitfromsingularity} we know that $\Lambda$ contains an ${\rm Ind}(\alpha)$-periodic limit in the case of $ I(\sigma)\leq 0$ and contains an $({\rm Ind}(\alpha)-1)$-periodic limit in the case of $I(\sigma)\geq 0$. By the choice of $\alpha$ we know that ${\rm Ind}(\sigma)\geq\alpha$ and if ${\rm Ind}(\sigma)=\alpha$ then $I(\sigma)<0$. Symmetrically we have ${\rm Ind}(\sigma)\leq \beta+1$ and if ${\rm Ind}(\sigma)=\beta+1$ then $I(\sigma)>0$ by the choice of $\beta$. This ends the proof of the lemma.
\end{proof}

Recall that $\tilde\Lambda$ denotes the closure of $\{X(x)/\|X(x)\|: x\in\Lambda\setminus Sing(X)\}$ in $SM$. Given a singularity $\sigma\in\Lambda$, denote by $\tilde\Lambda_\sigma=\tilde\Lambda\cap T_\sigma M$. Since both $\tilde\Lambda$ and $T_\sigma M$ are $\Phi_t^\#$-invariant, we know $\tilde\Lambda_\sigma$ is invariant under $\Phi_t^\#$.

\begin{Lemma}\label{Lem:tildelambdaatsingularity}
There is a residual set $\cR\subset\cX^1(M)$ with the following properties. Let $X\in\mathcal{R}\setminus\overline{HT}$ and $\Lambda$ be an isolated non-trivial transitive set of $X$ and $\alpha, \beta$ be given as above.

Then for any singularity $\sigma\in\Lambda$, there is a partially hyperbolic splitting $T_\sigma M=E^{ss}_\sigma\oplus E_\sigma^1\oplus E_\sigma^2\oplus\cdots\oplus E_\sigma^{\beta+1-\alpha}\oplus E^{uu}_\sigma$ (w.r.t. $\Phi_t$), where $E^{ss}_\sigma$ is contracting with $\dim E_\sigma^{ss}=\alpha$, $E^{uu}_\sigma$ is expanding with $\dim E_{\sigma}^{uu}=\dim M-\beta-1$ and $\dim E_\sigma^i=1$ for all $i=1, 2, \cdots, \beta+1-\alpha$.

Moreover, for $\tilde\Lambda_\sigma$, one has that

\begin{enumerate}
\item  if ${\rm Ind}(\sigma)=\alpha$ then $\tilde\Lambda_\sigma\subset E^{ss}_\sigma\oplus E_\sigma^1$;

\item if ${\rm Ind}(\sigma)>\alpha$ then $\tilde\Lambda_\sigma\cap E_\sigma^{ss}=\emptyset$.

\end{enumerate}

\end{Lemma}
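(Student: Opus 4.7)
The plan is to use the normal-bundle dominated splittings at every integer index $i\in[\alpha,\beta]$ to build a flag structure on $T_\sigma M$, and then to deduce the two statements about $\tilde{\Lambda}_\sigma$ from Lemma~\ref{Lem:put-outside}.

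First, from Lemma~\ref{Lem:indexcomplete}, $\Lambda$ is an $i$-periodic limit for every $i\in[\alpha,\beta]$, and $N_{\Lambda\setminus Sing(X)}$ admits a dominated splitting of index $i$ with respect to $\psi_t$. For each such $i$, Lemma~\ref{Lem:splittingonsingularity2} therefore applies: when $i<{\rm Ind}(\sigma)$ one obtains a dominated splitting $T_\sigma M=E^{ss,i}_\sigma\oplus E^{cs,i}_\sigma\oplus E^u_\sigma$ with $\dim E^{ss,i}_\sigma=i$, and when $i\geq{\rm Ind}(\sigma)$ one obtains $T_\sigma M=E^s_\sigma\oplus E^{cu,i}_\sigma\oplus E^{uu,i}_\sigma$ with $\dim E^{uu,i}_\sigma=\dim M-1-i$.

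Second, I would combine these families into a single fine splitting on $T_\sigma M$. By the uniqueness of dominated splittings, the subspaces $E^{ss,\alpha}_\sigma\subset E^{ss,\alpha+1}_\sigma\subset\cdots\subset E^{ss,{\rm Ind}(\sigma)-1}_\sigma$ form an increasing flag inside $E^s_\sigma$ with each step of dimension one, and symmetrically $E^{uu,\beta}_\sigma\subset E^{uu,\beta-1}_\sigma\subset\cdots\subset E^{uu,{\rm Ind}(\sigma)}_\sigma$ grows inside $E^u_\sigma$ one dimension at a time. Taking the successive $\Phi_t$-invariant one-dimensional complements within these flags, and joining the two halves at the dimension that separates $E^s_\sigma$ from $E^u_\sigma$, yields the desired partially hyperbolic splitting $T_\sigma M=E^{ss}_\sigma\oplus E^1_\sigma\oplus\cdots\oplus E^{\beta+1-\alpha}_\sigma\oplus E^{uu}_\sigma$ with $\dim E^{ss}_\sigma=\alpha$, $\dim E^{uu}_\sigma=\dim M-\beta-1$, and $\dim E^j_\sigma=1$ for each $j$.

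Third, for the statements about $\tilde{\Lambda}_\sigma$, I would use $\tilde{\Lambda}\subset B(\Lambda)$ from Lemma~\ref{Lem:wildecontainedinB} together with Lemma~\ref{Lem:put-outside} applied at $i=\alpha$. In case (1), where ${\rm Ind}(\sigma)=\alpha$, that lemma gives $B(\Lambda)\cap T_\sigma M\subset E^s_\sigma\oplus E^{cu,\alpha}_\sigma$; since $\dim E^s_\sigma=\alpha$ and $\dim E^{cu,\alpha}_\sigma=1$, uniqueness of dominated splittings identifies $E^s_\sigma=E^{ss}_\sigma$ and $E^{cu,\alpha}_\sigma=E^1_\sigma$, so that $\tilde{\Lambda}_\sigma\subset E^{ss}_\sigma\oplus E^1_\sigma$. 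In case (2), where ${\rm Ind}(\sigma)>\alpha$, Lemma~\ref{Lem:put-outside} gives $B(\Lambda)\cap T_\sigma M\subset E^{cs,\alpha}_\sigma\oplus E^u_\sigma$; since this subspace meets $E^{ss,\alpha}_\sigma=E^{ss}_\sigma$ only at the origin while $\tilde{\Lambda}_\sigma$ consists of unit vectors, we conclude $\tilde{\Lambda}_\sigma\cap E^{ss}_\sigma=\emptyset$.

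The main obstacle is the second step: verifying that the separate dominated splittings at different indices fit together into a single fine flag with one-dimensional central pieces. This requires comparing dominated splittings across indices via their uniqueness, and checking that raising $i$ by one enlarges $E^{ss,i}_\sigma$ (respectively shrinks $E^{uu,i}_\sigma$) by exactly one dimension, which reflects the strict refinement of the flag on the normal bundle provided by Lemma~\ref{Lem:indexcomplete}.
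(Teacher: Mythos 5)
Your plan is correct and matches the paper's proof: both assemble the tangent-space splitting at $\sigma$ from the index-$i$ dominated splittings ($i\in[\alpha,\beta+1]$) supplied by Lemma~\ref{Lem:indexcomplete} via Lemma~\ref{Lem:splittingonsingularity2}, together with the hyperbolic splitting $E^s_\sigma\oplus E^u_\sigma$ at index ${\rm Ind}(\sigma)$, and then deduce the claims about $\tilde\Lambda_\sigma$ by applying Lemma~\ref{Lem:wildecontainedinB} and Lemma~\ref{Lem:put-outside} at $i=\alpha$. The ``obstacle'' you flag --- nesting the dominated splittings at different indices into a single flag with one-dimensional central pieces --- is automatic for the linear cocycle $\Phi_t|_{T_\sigma M}$ over the single point $\sigma$ and does not rely on the normal-bundle flag in the way you suggest, so there is no genuine difficulty there.
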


\begin{proof}
Let $\sigma\in\Lambda$ be a singularity with a hyperbolic splitting $E_\sigma^s\oplus E_\sigma^u$. By Lemma \ref{Lem:indicesofsingularity} we know that ${\rm Ind}(\sigma)\in[\alpha,\beta+1]$. Since for all $i\in[\alpha,\beta]$, $\Lambda$ contains $i$-periodic limit,  by Lemma \ref{Lem:splittingonsingularity2} we know that (1) for any $\alpha\leq i<{\rm Ind}(\sigma)$, there exists a dominated splitting $T_\sigma M=E\oplus F$ of index $i$ with $E\subset E_\sigma^s$; (2) for any ${\rm Ind}(\sigma)<i\leq\beta+1$, there exists a dominated splitting $T_\sigma M=E\oplus F$ of index $i$ with $F\subset E_\sigma^u$. Note that $E^s_\sigma\oplus E_\sigma^u$ is already a dominated splitting of index ${\rm Ind}(\sigma)$. Hence for all $\alpha\leq i\leq \beta+1$, there is a dominated splitting $T_\sigma M=E\oplus F$ of index $i$, then we have a dominated splitting $T_\sigma M=E^{ss}_\sigma\oplus E_\sigma^1\oplus E_\sigma^2\oplus\cdots\oplus E_\sigma^{\beta+1-\alpha}\oplus E^{uu}_\sigma$ with the properties: (1) $E_\sigma^{ss}\subset E_\sigma^s$ and $E^{uu}_\sigma\subset E_\sigma^u$; (2) $\dim E_\sigma^{ss}=\alpha$ and $\dim E_{\sigma}^{uu}=\dim M-\beta-1$; (3) $\dim E_\sigma^i=1$ for all $i=1, 2, \cdots, \beta+1-\alpha$.
%\end{proof}
%
%
%
%\begin{Lemma}
%There is a residual set $\cR\subset\cX^1(M)$ with the following properties. Let $X\in\mathcal{R}\setminus\overline{HT}$ and $\Lambda$ be an isolated non-trivial transitive set of $X$. Then for any singularity $\sigma\in\Lambda$, we have $(1)$ if ${\rm Ind}(\sigma)=\alpha$ then $\tilde\Lambda_\sigma\subset E^{ss}_\sigma\oplus E_\sigma^1$; $(2)$ if ${\rm Ind}(\sigma)>\alpha$ then $\tilde\Lambda_\sigma\cap E_\sigma^{ss}=\emptyset$.
%\end{Lemma}
%
%\begin{proof}

Let $\sigma\in\Lambda$ be a singularity of $X$. If ${\rm Ind}(\sigma)=\alpha$, then by the fact that $\Lambda$ contains an $\alpha$-periodic limit we know $\tilde\Lambda\subset B(\Lambda)\subset E_\sigma^{ss}\oplus E_\sigma^1$ from Lemma~\ref{Lem:wildecontainedinB} and Lemma~\ref{Lem:put-outside}. If ${\rm Ind}(\sigma)>\alpha$, also by the fact that $\Lambda$ contains an $\alpha$-periodic limit we know that $\tilde\Lambda\subset B(\Lambda)\subset E_\sigma^1\oplus E_\sigma^2\oplus\cdots\oplus E_\sigma^{\beta+1-\alpha}\oplus E^{uu}_\sigma$, hence $\tilde\Lambda\cap E_\sigma^{ss}=\emptyset$. This ends the proof of the lemma.
\end{proof}

By Proposition \ref{Pro:extendedLPF} we know that the dominated splitting $N_{\Lambda\setminus Sing(X)}=N^{cs}\oplus N_1\oplus\cdots\oplus N_{\beta-\alpha}\oplus N^{cu}$ on $\Lambda\setminus Sing(X)$ (w.r.t. $\psi_t$) can be extended to be a dominated splitting $N_{\tilde\Lambda}=N^{cs}\oplus N_1\oplus\cdots\oplus N_{\beta-\alpha}\oplus N^{cu}$ on $\tilde\Lambda$ with respect to the extended linear Poincar\'e flow $\tilde\psi_t$.

\begin{Lemma}\label{Lem:contractingatsingularity}
There is a residual set $\cR\subset\cX^1(M)$ with the following properties. Let $X\in\mathcal{R}\setminus\overline{HT}$ and $\Lambda$ be an isolated non-trivial transitive set of $X$ and $\sigma$ be a singularity in $\Lambda$. If ${\rm Ind}(\sigma)=\alpha$, then there exist $C>1$, $\lambda>0$ such that
$$\|\Phi_t(e)\|\cdot \|\tilde\psi_t|_{N^{cs}(e)}\|< C{\rm e}^{-\lambda t}$$
for all $e\in \tilde\Lambda_\sigma$ and $t>0$. If ${\rm Ind}(\sigma)>\alpha$, then there exist $C>1$, $\lambda>0$ such that
$$\|\Phi_t(e)\|^{-1}\cdot \|\tilde\psi_t|_{N^{cs}(e)}\|< C{\rm e}^{-\lambda t}$$
for all $e\in \tilde\Lambda_\sigma$ and $t>0$.
\end{Lemma}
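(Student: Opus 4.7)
My plan is to split into the two cases according to $\operatorname{Ind}(\sigma)$ and, in each, determine $N^{cs}(e)$ at the singular limit $e\in\tilde{\Lambda}_\sigma$ in terms of the eigenspace decomposition of $DX(\sigma)$. After rescaling the Riemannian metric near $\sigma$ I may assume the blocks $E^{ss}_\sigma,E^1_\sigma,\dots,E^{\beta+1-\alpha}_\sigma,E^{uu}_\sigma$ are mutually orthogonal; this changes norms by a bounded factor and does not affect exponential rates.

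Case $\operatorname{Ind}(\sigma)>\alpha$: the proof of Lemma~\ref{Lem:tildelambdaatsingularity} gives $\tilde{\Lambda}_\sigma\subset F:=E^1_\sigma\oplus\cdots\oplus E^{uu}_\sigma$, so every $e\in\tilde{\Lambda}_\sigma$ is orthogonal to $E^{ss}_\sigma$; hence $E^{ss}_\sigma\subset N_e$, and since $\Phi_t(E^{ss}_\sigma)\subset E^{ss}_\sigma\perp\Phi_t^{\#}(e)\in F$ I have $\tilde{\psi}_t|_{E^{ss}_\sigma}=\Phi_t|_{E^{ss}_\sigma}$. So $E^{ss}_\sigma$ is a $\tilde{\psi}_t$-invariant $\alpha$-dimensional subspace of $N_e$; by uniqueness of the dominated splitting supplied by Lemma~\ref{Lem:indexcomplete} and Proposition~\ref{Pro:extendedLPF}, $N^{cs}(e)=E^{ss}_\sigma$. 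The tangent-flow domination $E^{ss}_\sigma\oplus F$ gives $\|\Phi_t|_{E^{ss}_\sigma}\|\cdot\|\Phi_{-t}|_F\|\leq Ce^{-\lambda t}$, while for unit $e\in F$ one has $\|\Phi_t(e)\|^{-1}\leq\|\Phi_{-t}|_F\|$ (because $\Phi_t(F)=F$), so
\begin{equation*}
\frac{\|\tilde{\psi}_t|_{N^{cs}(e)}\|}{\|\Phi_t(e)\|}=\frac{\|\Phi_t|_{E^{ss}_\sigma}\|}{\|\Phi_t(e)\|}\leq\|\Phi_t|_{E^{ss}_\sigma}\|\cdot\|\Phi_{-t}|_F\|\leq Ce^{-\lambda t}.
\end{equation*}

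Case $\operatorname{Ind}(\sigma)=\alpha$: now $\tilde{\Lambda}_\sigma\subset E^{ss}_\sigma\oplus E^1_\sigma$ and $I(\sigma)<0$ by Lemma~\ref{Lem:indicesofsingularity}. For $e\in\tilde{\Lambda}_\sigma$ a codimension count gives $\dim((E^{ss}_\sigma\oplus E^1_\sigma)\cap N_e)=\alpha$; since $\Phi_t^{\#}(e)\in E^{ss}_\sigma\oplus E^1_\sigma$, the orthogonal projection along $\Phi_t^{\#}(e)$ preserves this subspace, making the intersection $\tilde{\psi}_t$-invariant. The tangent-flow domination $(E^{ss}_\sigma\oplus E^1_\sigma)\oplus(E^2_\sigma\oplus\cdots\oplus E^{uu}_\sigma)$ descends to $N_e$, so uniqueness forces $N^{cs}(e)=(E^{ss}_\sigma\oplus E^1_\sigma)\cap N_e$ and $\langle e\rangle+N^{cs}(e)=E^{ss}_\sigma\oplus E^1_\sigma$. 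The estimate then rests on the $2$-dimensional Jacobian identity: for any unit $v\in N^{cs}(e)$ (so $v\perp e$ and $|e\wedge v|=1$),
\begin{equation*}
\|\Phi_t(e)\|\cdot\|\tilde{\psi}_t(v)\|=\|\Phi_t(e)\wedge\Phi_t(v)\|=\bigl|\Lambda^2\Phi_t(e\wedge v)\bigr|.
\end{equation*}
Since $e\wedge v\in\Lambda^2(E^{ss}_\sigma\oplus E^1_\sigma)$, and $\Phi_t|_{E^{ss}_\sigma\oplus E^1_\sigma}$ has eigenvalues $\lambda_1,\dots,\lambda_\alpha,\eta_1$, the maximal eigenvalue of $\Lambda^2\Phi_t$ on this exterior square is $\operatorname{Re}(\lambda_1)+\operatorname{Re}(\eta_1)=I(\sigma)<0$. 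Absorbing Jordan polynomial factors into a small $\varepsilon>0$ with $I(\sigma)+\varepsilon<0$ and taking the supremum over unit $v$ yields $\|\Phi_t(e)\|\cdot\|\tilde{\psi}_t|_{N^{cs}(e)}\|\leq Ce^{(I(\sigma)+\varepsilon)t}$, as desired.

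The main obstacle is the identification of $N^{cs}(e)$ at the singular limit points in concrete tangent-space terms; once this is pinned down via uniqueness of the dominated splitting, case $\operatorname{Ind}(\sigma)>\alpha$ collapses to a rewriting of the tangent-flow domination inequality, while case $\operatorname{Ind}(\sigma)=\alpha$ collapses to the eigenvalue inequality $I(\sigma)<0$ viewed through the exterior-square identity.
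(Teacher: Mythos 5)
Your proposal is correct and follows essentially the same route as the paper: in both cases one identifies $N^{cs}(e)$ for $e\in\tilde\Lambda_\sigma$ via uniqueness of the dominated splitting, then for $\operatorname{Ind}(\sigma)=\alpha$ uses the exterior-square identity $\|\Phi_t(e)\|\cdot\|\tilde\psi_t(v)\|=\|\wedge^2\Phi_t(e\wedge v)\|$ together with $I(\sigma)<0$, and for $\operatorname{Ind}(\sigma)>\alpha$ uses the tangent-flow domination of $E^{ss}_\sigma$ over the complementary bundle. Your metric-rescaling to make the eigenspaces orthogonal is a cosmetic simplification of the paper's explicit bookkeeping with orthogonal projections and the uniform bound $m(\pi_e)\geq K^{-1}$; the one place you are terse is in invoking ``uniqueness'' to conclude $N^{cs}(e)=E^{ss}_\sigma$ (resp.\ $=(E^{ss}_\sigma\oplus E^1_\sigma)\cap N_e$) — strictly this requires verifying that the candidate invariant splitting of $N_e$ is itself dominated for $\tilde\psi_t$, which the paper checks explicitly but which does follow readily from the $\Phi_t$-domination under your orthogonality normalization.
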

\begin{proof}
Assume $\sigma\in\Lambda$ is a singularity with ${\rm Ind}(\sigma)=\alpha$. From Lemma \ref{Lem:tildelambdaatsingularity} we know that $\tilde\Lambda_\sigma\subset E_\sigma^{ss}\oplus E_\sigma^1$. Denote by $F^{cs}_\sigma=E_\sigma^{ss}\oplus E_\sigma^1$ and $F^u_\sigma=E_\sigma^2\oplus\cdots\oplus E_\sigma^{\beta+1-\alpha}\oplus E^{uu}_\sigma$. Let $\pi_e$ be the orthogonal projection from $T_\sigma M$ to $N_e$ for any $e\in\tilde\Lambda_\sigma$.

\begin{Claim}
$N^{cs}(e)=\pi_e(F^{cs}_\sigma)$ for all $e\in\tilde\Lambda_\sigma$.
\end{Claim}
\begin{proof}[Proof of the Claim]
We define two sub-bundles $\Delta^{cs}, \Delta^{cu}\subset N_{\tilde\Lambda_\sigma}$ by letting
$$\Delta^{cs}(e)=\pi_e(F^{cs}_\sigma),~~~ \Delta^{cu}(e)=\pi_e(F^u_\sigma), ~~~\forall e\in\tilde\Lambda_\sigma.$$
By the fact that $T_\sigma M=F^{cs}_\sigma\oplus F^u_\sigma$ we know that $N_{\tilde\Lambda_\sigma}=\Delta^{cs}\oplus \Delta^{cu}$. Since $F^{cs}_\sigma$ and $F^u_\sigma$ are invariant by $\Phi_t$, we can get that $\Delta^{cs}$ and $\Delta^{cu}$ are invariant by $\tilde\psi_t$, that is, $$\tilde\psi_t(\Delta^{cs}(e))=\Delta^{cs}(\Phi_t^\#(e)), ~~~\tilde\psi_t(\Delta^{cu}(e))=\Delta^{cu}(\Phi_t^\#(e)), ~~~\forall e\in\tilde\Lambda_\sigma, ~t\in\mathbb{R}.  $$
Since $e\in F^{cs}_\sigma$ we know that there is a lower bound for the angle between $e\in\tilde\Lambda_\sigma$ and $F^{u}_\sigma$, hence there is $K>1$ such that $m(\pi_e|_{F^{u}_\sigma})\geq K^{-1}$ for all $e\in\tilde\Lambda_\sigma$.
For any $e\in\tilde\Lambda_\sigma$ and any unit vectors $u\in\Delta^{cs}(e), v\in\Delta^{cu}(e)$, letting $v=\pi_e(v')$ where $v'\in F^u_\sigma$, we have

$$\frac{\|\tilde\psi_{t}(u)\|}{\|\tilde\psi_{t}(v)\|}\leq\frac{\|\Phi_{t}(u)\|}{\|\tilde\psi_{t}(v)\|}=\frac{\|\Phi_{t}(u)\|}{\|\pi_{\Phi^\#_t(e)}(\Phi_{t}(v'))\|}\leq K\frac{\|\Phi_{t}(u)\|}{\|\Phi_{t}(v')\|}\leq K\frac{\|\Phi_t|_{F^{cs}_\sigma}\|}{m(\Phi_t|_{F^u_\sigma})}.$$

Note that $F^{cs}_\sigma\oplus F^u_\sigma$ is a dominated splitting of index $\alpha+1$. From the above estimation we can see that $\Delta^{cs}\oplus \Delta^{cu}$ is a dominated splitting. Since $e\in F^{cs}_\sigma$, we know that $\dim \Delta^{cs}=\dim F^{cs}_\sigma=\alpha$. Note that $N^{cs}\oplus (N_1\oplus\cdots\oplus N_{\beta-\alpha}\oplus N^{cu})$ is also a dominated splitting of index $\alpha$. We can see that $\Delta^{cs}=N^{cs}$ by the uniqueness of dominated splitting. Hence we have $N^{cs}(e)=\pi_e(F^{cs}_\sigma)$ for all $e\in\tilde\Lambda_\sigma$.
\end{proof}

Note that $E_\sigma^{ss}$ is the stable space of $\sigma$ and $E_\sigma^1$ is the eigenspace associated to the weakest unstable eigenvalue of $\Phi_1$. From Lemma \ref{Lem:indicesofsingularity} we know that
$$\lim_{t\to+\infty}\frac{1}{t}\log\|\wedge^2\Phi_t|_{F^{cs}_\sigma}\|=I(\sigma)<0.$$
Hence there are $C>1,\lambda>0$ such that
$$\|\wedge^2\Phi_t|_{F^{cs}_\sigma}\|<C{\rm e}^{-\lambda t}$$
for all $t>0$. For any $e\in\tilde\Lambda_\sigma$ and any unit vector $u\in N^{cs}(e)$, we have
$$\|\Phi_t(e)\|\cdot\|\tilde\psi_t(u)\|=\|\Phi_t(e)\wedge\Phi_t(u)\|=\|\wedge^2\Phi_t(e\wedge u)\|<C{\rm e}^{-\lambda t}$$
for all $t>0$. Hence we have $$\|\Phi_t(e)\|\cdot \|\tilde\psi_t|_{N^{cs}(e)}\|< C{\rm e}^{\lambda t}$$
for all $e\in \tilde\Lambda_\sigma$ and $t>0$.

Now let us assume $\sigma\in\Lambda$ is a singularity with ${\rm Ind}(\sigma)>\alpha$. In this case we know that $\tilde\Lambda_\sigma\cap E^{ss}_\sigma=\emptyset$. Similar as in above, by the uniqueness of dominated splitting,  we can prove that $N^{cs}(e)=\pi_e(E_\sigma^{ss})$ for any $e\in\tilde\Lambda_\sigma$, where $\pi_e$ denotes the orthogonal projection from $T_\sigma M$ to $N_e$. Note that $\tilde\Lambda_\sigma\subset E_\sigma^1\oplus \cdots E_\sigma^{\beta+1-\alpha}\oplus E^{uu}_\sigma$, we have a constant $K'>1$ such that $m(\pi_e|_{E_\sigma^{ss}})>K'^{-1}$ for all $e\in\tilde\Lambda_\sigma$.  Then by the fact that $E^{ss}_\sigma\oplus (E_\sigma^1\oplus \cdots E_\sigma^{\beta+1-\alpha}\oplus E^{uu}_\sigma)$ is a dominated splitting we know that there are $C_0>1$ and $\lambda>0$ such that for any $u\in E_\sigma^{ss}$ and $v\in E_\sigma^1\oplus \cdots E_\sigma^{\beta+1-\alpha}\oplus E^{uu}_\sigma$ and $t>0$, one has
$$\frac{\|\Phi_t(u)\|}{\|\Phi_t(v)\|}<C_0 {\rm e}^{-\lambda t}.$$
Hence for any $e\in\tilde\Lambda_\sigma$ and any unit vector $u\in N^{cs}(e)$ and $t>0$, letting $u=\pi_e(u')$ where $u'\in E_\sigma^{ss}$, we have
$$\frac{\|\tilde\psi_t(u)\|}{\|\Phi_t(e)\|}=\frac{\|\pi_{\Phi_t^\#(e)}(\Phi_t(u'))\|}{\|\Phi_t(e)\|}\leq\frac{\|\Phi_t(u')\|}{\|\Phi_t(e)\|}\leq K'C_0{\rm e}^{-\lambda t}.$$
Let $C=K'C_0$, then for any $e\in\tilde\Lambda_\sigma$ and $t>0$, we have
$$\|\Phi_t(e)\|^{-1}\cdot \|\tilde\psi_t|_{N^{cs}(e)}\|< C{\rm e}^{\lambda t}.$$
This ends the proof of Lemma \ref{Lem:contractingatsingularity}.
\end{proof}

No we proceed to prove Theorem~\ref{Thm:multisingularpartiallyhyperbolic}.
\begin{proof}[Proof of Theorem~\ref{Thm:multisingularpartiallyhyperbolic}]
To prove $\Lambda$ is multisingular partially hyperbolic, we will find two reparemtrizing cocycles $h_t^s$ and $h_t^u$ such that $h_t^s\cdot \psi_t|_{N^{cs}}$ is contracting and $h_t^u\cdot\psi_t|_{N^{cu}}$ is expanding. Here we construct $h_t^s, h_t^u$ as following. Let $S$ be the set of singularities of $X$ contained in $\Lambda$ and denote by
$$S_1=\{\sigma\in\Lambda: \sigma \text{ is a singularity of index } \alpha\},$$
$$S_2=\{\sigma\in\Lambda: \sigma \text{ is a singularity of index } \beta+1\}.$$
Since $X$ is a Kupka-Smale system we know that $X$ contains only finitely many singularities. Write
$$S_1=\{\sigma_1,\sigma_2, \cdots, \sigma_k\},$$
$$S_2=\{\sigma_1', \sigma_2', \cdots, \sigma_l'\}.$$
Now we choose $c>0$ small enough such that for any $\sigma\in S$, the connected component $V_\sigma$ of $\{x\in M: \|X(x)\|<c\}$ containing $\sigma$ is an isolated neighborhood for $\sigma$. Now we define a positive continuous functions $k^\sigma:\Lambda\setminus Sing(X)\to\mathbb{R}$ associated to $\sigma$ by the following way:
$$k^{\sigma}(x)=\|X(x)\|, \text{ when } x\in V_\sigma;~~~ k^\sigma(x)=c, \text{ when } x\notin V_\sigma;$$
Then define a pragmatic cocycle $h_t^\sigma$ associated to the singularity $\sigma$ by
$$h_t^\sigma(x)=\frac{k^\sigma(\varphi_t(x))}{k^\sigma(x)}, ~\forall x\in\Lambda\setminus Sing(x), t\in\mathbb{R};$$
Now we can define two reparametrizing cocycles $h_t^s, h_t^u$ by letting
$$h_t^s(x)=(\prod_{\sigma\in S_1}h_t^{\sigma}(x))\cdot(\prod_{\sigma\in S\setminus S_1}h_t^{\sigma}(x))^{-1},~~~~\forall x\in\Lambda\setminus Sing(X),~ t\in\mathbb{R};$$
$$h_t^u(x)=(\prod_{\sigma\in S_2}h_t^{\sigma}(x))\cdot(\prod_{\sigma\in S\setminus S_2}h_t^{\sigma}(x))^{-1},~~~~\forall x\in\Lambda\setminus Sing(X),~ t\in\mathbb{R}.$$

We will prove that  $h_t^s\cdot \psi_t|_{N^{cs}}$ is contracting. Note that for any singularity $\sigma\in\Lambda$, the pragmatic cocycle $h_t^\sigma$ associated to $\sigma$ gives automatically a cocycle $\tilde h_t^\sigma$ on $\{X(x)/\|X(x)\|:x\in\Lambda\setminus Sing(X)\}$ (with respect to the flow $\Phi_t^\#$) by letting
$$\tilde h_t^\sigma(X(x)/\|X(x)\|)=h_t^\sigma(x),~~\forall x\in\Lambda\setminus Sing(X).$$
$\tilde h_t^\sigma$ can be continuously extended to $\tilde\Lambda$ as following:
$$\tilde h_t^{\sigma}(e)=\|\Phi_t(e)\|, ~~\forall e\in\tilde\Lambda_\sigma,$$
$$\tilde h_t^\sigma(e)=1, ~~\forall e\in\tilde\Lambda_{\sigma'} \text { where }\sigma'\in Sing(X), \sigma'\neq \sigma.$$
Hence the reparametrizing cocycle $h_t^s:\Lambda\setminus Sing(X)\to \mathbb{R}$ can be extend to be a continuous cocycle $\tilde h_t^s:\tilde\Lambda \to \mathbb{R}$ with the following properties:
\begin{enumerate}
\item $\tilde h_t^s(e)=h_t^s(x)$, if $\rho(e)=x\in\Lambda\setminus Sing(X)$;
\item $\tilde h_t^s(e)=\|\Phi_t(e)\|$, if $\rho(e)\in S_1$;
\item $\tilde h_t^s(e)=\|\Phi_t(e)\|^{-1}$, if $\rho(e)\in S\setminus S_1$.
\end{enumerate}

To prove  $h_t^s\cdot \psi_t|_{N^{cs}}$ is contracting, we just need to prove that  $\tilde h_t^s\cdot \tilde\psi_t|_{N^{cs}}$ is contracting. We will achieve this by contradiction.

From Proposition \ref{Pro:dominatedsplitting2}, Lemma \ref{Lem:uniformcontracting} and Lemma \ref{Lem:contractingatsingularity}, we can choose constants $l, \eta$ with the following properties:
\begin{itemize}
\item[(A1)] there exist a neighborhood $\mathcal{U}$ of $X$ and a neighborhood $U$ of $\Lambda$ and a constant $N_0>0$ such that for any $Y\in \mathcal{U}$ and any periodic orbit $P$ of $Y$ in $U$ with $\pi(P)>N_0$, one has
\begin{equation}\label{Equ:3}\prod_{i=0}^{[\pi(P)/l-1]}\|\psi^Y_l|_{N^{cs}(\varphi^Y_{il}(p))}\|< e^{-\eta [\pi(P)/l]},\end{equation}
for all $p\in P$;
\item[(A2)] for any periodic point $p\in\Lambda$ of $X$, one has
\begin{equation}\label{Equ:5}\limsup_{n\to+\infty}\frac{1}{n}\sum_{i=0}^{n-1}\log \|\psi^X_{l}|_{N^{cs}(\varphi^X_{il}(p))}\|<-\eta;\end{equation}

\item[(A3)] for any singularity $\sigma\in\Lambda$ with ${\rm Ind}(\sigma)=\alpha$, one has
$$\tilde h_l^s(e)\cdot \|\tilde\psi_l|_{N^{cs}(e)}\|=\|\Phi_l(e)\|\cdot \|\tilde\psi_l|_{N^{cs}(e)}\|< {\rm e}^{-\eta l}, ~~~~\forall e\in\tilde\Lambda_\sigma;$$
for any singularity $\sigma\in\Lambda$ with ${\rm Ind}(\sigma)>\alpha$, one has
$$\tilde h_l^s(e)\cdot \|\tilde\psi_l|_{N^{cs}(e)}\|=\|\Phi_l(e)\|^{-1}\cdot \|\tilde\psi_l|_{N^{cs}(e)}\|< {\rm e}^{-\eta l}, ~~~~\forall e\in\tilde\Lambda_\sigma.$$

\end{itemize}
Assume on the contrary that $\tilde h_t^s\cdot\tilde\psi_t|_{N^{cs}}$ is not contracting, then there exists $e\in\tilde\Lambda$ such that
 $$\prod_{i=0}^{n-1}\tilde h_l^s(\Phi_{il}^\#(e))\cdot\|\tilde\psi_l|_{N^{cs}(\Phi_{il}^\#(e))}\|\geq 1, ~~~\forall n=1,2,\cdots,$$
thus by the continuity of $\tilde h_l^s\cdot\|\tilde\psi_t|_{N^{cs}}\|$, we can find a $\Phi_l^\#$-ergodic measure $\mu$ supported on $\tilde\Lambda$ such that
$$\int_{\tilde\Lambda}(\log( \tilde h_l^s(e))+\log\|\tilde\psi_l|_{N^{cs}(e)}\|)d\mu\geq 0.$$

From the condition $(A3)$ of $l,\eta$, we know that $\mu$ can not be supported on $\cup_{\sigma\in S}\tilde\Lambda_\sigma$. Then we have
$$\int_{\Lambda\setminus Sing(X)}(\log( h_l^s(x))+\log\|\psi_l|_{N^{cs}(x)}\|)d\rho_*\mu\geq 0,$$
where $\rho_*\mu$ denoted the $\varphi_l$-ergodic measure on $\Lambda$ given by $\rho_*\mu(\cdot)=\mu(\rho^{-1}(\cdot))$.
Note that for $\rho_*\mu$-a.e. point $x$, we have
$$\lim_{n\to+\infty}\frac{1}{n}\sum_{i=0}^{n-1}\log( h_l^s(\varphi_{il}(x))=\lim_{n\to+\infty}\frac{1}{n}(\log(h_{nl}^s(x)))$$
$$=\lim_{n\to+\infty}\frac{1}{n}[\sum_{\sigma\in S_1}(\log (k^\sigma(\varphi_{nl}(x)))-\log(k^\sigma(x)))-\sum_{\sigma\in S_2}(\log (k^\sigma(\varphi_{nl}(x)))-\log(k^\sigma(x)))]=0.$$
then we have
$$\int_{\Lambda\setminus Sing(X)}\log(h_l^s(x))d\rho_*\mu=0$$
by the Birkhoff's ergodic theorem. Hence we have
$$\int_{\Lambda\setminus Sing(X)}\log\|\psi_l|_{N^{cs}(x)}\|d\rho_*\mu\geq 0.$$
Then we can take a well closable point $a$ which is also a generic point of $\rho_*\mu$ by the ergodic closing lemma (Proposition \ref{Pro:ergodicclosinglemma}).  By the Birkhoff's ergodic theorem,
 we have
\begin{equation}\lim_{n\to+\infty}\frac{1}{n}\sum_{i=0}^{n-1}\log\|\psi_l|_{N^{cs}({\varphi_{il}(a)})}\|\geq 0.\end{equation}
From condition $(A2)$ of $l,\eta$, we know that $a$ can not be a periodic point of $X$. By the choice that $a$ is also a well closable point, we can find a sequence of $Y_n\to X$ with a periodic point $p_n$ such that the orbit of $p_n$ (under $Y_n$ respectively) close to the orbit segment of $\varphi_{[0,\pi(p_n)]}(a)$. Since $a$ is not periodic, we have $\pi(p_n)\to \infty$ as $n\to\infty$.

\begin{Claim} There is $N_1>0$, such that for any $n>N_1$, we have
$$\frac{1}{[\pi(p_n)/l]}\sum_{i=0}^{[\pi(p_n)/l-1]}\log\|\psi^{Y_n}_l|_{N^{cs}({\varphi^{Y_n}_{il}(p_n)})}\|>-\eta.$$
\end{Claim}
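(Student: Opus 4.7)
The plan is to compare the Birkhoff averages of $\log\|\psi_l|_{N^{cs}}\|$ along the orbit of $a$ under $X$ and along the periodic orbit of $p_n$ under $Y_n$, and use the shadowing property together with the continuity of the dominated splitting.

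First I would fix $Y_n$ close enough to $X$ so that Lemma~\ref{Lem:splittingonperiodic} applies uniformly in $n$: the periodic orbit $Orb_{Y_n}(p_n) \subset U$ carries an $l$-dominated splitting $N^{cs}_{Y_n}\oplus N^{cu}_{Y_n}$ of index $\alpha$ with respect to the linear Poincar\'e flow $\psi_t^{Y_n}$ (and in fact the full $\alpha$-indexed $l$-dominated decomposition one obtains from the chain $\alpha,\alpha+1,\dots,\beta$). By the uniqueness of dominated splittings, once the splitting exists it depends continuously on the vector field and the base point in the space where the splitting is defined; combined with the fact that $Orb_{Y_n}(p_n)$ $\delta_n$-shadows $\varphi^X_{[0,\pi(p_n)]}(a)$ with $\delta_n\to 0$ and that, via Proposition~\ref{Pro:extendedLPF}, the splitting extends continuously to $\tilde\Lambda$ (hence is controlled even when the orbit of $a$ approaches a singularity), the bundle $N^{cs}_{Y_n}$ at $\varphi^{Y_n}_{il}(p_n)$ will converge to $N^{cs}$ at $\varphi^X_{il}(a)$ uniformly on the finite time scale $[0,\pi(p_n)]$ in the appropriate sense; since $\psi^{Y_n}_l$ converges to $\psi^X_l$ in $C^0$, it follows that for any $\varepsilon > 0$ and $n$ large enough
\[
\left|\log\|\psi^{Y_n}_l|_{N^{cs}_{Y_n}(\varphi^{Y_n}_{il}(p_n))}\| - \log\|\psi^X_l|_{N^{cs}(\varphi^X_{il}(a))}\|\right| < \varepsilon,
\]
uniformly for $0\le i \le [\pi(p_n)/l]-1$.

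Next I would exploit that $a$ is a generic point of the ergodic measure $\rho_*\mu$ and that we already established
\[
\lim_{n\to+\infty}\frac{1}{n}\sum_{i=0}^{n-1}\log\|\psi_l|_{N^{cs}(\varphi_{il}(a))}\|\ge 0.
\]
Hence there exists $N_0>0$ such that for every integer $m\ge N_0$,
\[
\frac{1}{m}\sum_{i=0}^{m-1}\log\|\psi^X_l|_{N^{cs}(\varphi^X_{il}(a))}\| > -\eta/2.
\]
Combining this lower bound with the uniform comparison above (taking $\varepsilon < \eta/2$) yields, once $\pi(p_n)/l > N_0$ and $Y_n$ is sufficiently close to $X$,
\[
\frac{1}{[\pi(p_n)/l]}\sum_{i=0}^{[\pi(p_n)/l]-1}\log\|\psi^{Y_n}_l|_{N^{cs}(\varphi^{Y_n}_{il}(p_n))}\| > -\eta/2 - \varepsilon > -\eta,
\]
which gives the claimed inequality for all $n$ larger than some $N_1$.

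The main difficulty is controlling the splitting $N^{cs}$ and the linear Poincar\'e flow along pieces of the orbit of $a$ (and of $p_n$) that enter small neighborhoods of singularities in $\Lambda$, where the usual linear Poincar\'e flow is not defined near the singular points themselves; this is exactly the reason we pass to the extended linear Poincar\'e flow and the closure $\tilde\Lambda$. Because the extended splitting is continuous on the compact set $\tilde\Lambda$, and the directions $X(\varphi^X_t(a))/\|X(\varphi^X_t(a))\|$ and $Y_n(\varphi^{Y_n}_t(p_n))/\|Y_n(\varphi^{Y_n}_t(p_n))\|$ land in (neighborhoods of) $\tilde\Lambda$ for the relevant times, the uniform comparison of the two logarithmic norms goes through, completing the argument.
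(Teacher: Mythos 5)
Your proposed proof hinges on a \emph{uniform} comparison
\[
\bigl|\log\|\psi^{Y_n}_l|_{N^{cs}_{Y_n}(\varphi^{Y_n}_{il}(p_n))}\| - \log\|\psi^X_l|_{N^{cs}(\varphi^X_{il}(a))}\|\bigr|<\varepsilon,
\quad 0\le i\le [\pi(p_n)/l]-1,
\]
and you justify it by the shadowing $d(\varphi_{il}(a),\varphi^{Y_n}_{il}(p_n))\to 0$ together with the continuity of the extended splitting on $\tilde\Lambda$. That last step does not hold near the singularities, and you cannot repair it by appealing to $\tilde\Lambda$. The difficulty you flag is real, but your resolution of it is not. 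The continuity of $N^{cs}$ on the compact set $\tilde\Lambda$ is a statement about directions $e\in SM$; what you actually need is continuity of the composite map
$x\mapsto X(x)/\|X(x)\|\mapsto N^{cs}(\cdot)$ with respect to the \emph{base point} $x\in\Lambda$ and the vector field, and this composite is not uniformly continuous near a singularity. For a hyperbolic saddle, two base points at distance $O(\epsilon)$ from each other (both at distance $O(\epsilon)$ from $\sigma$) can have normalized vector-field directions that are essentially orthogonal, so the two orbit segments can land on completely different parts of $\tilde\Lambda_\sigma$ even though they $\delta_n$-shadow each other. Consequently $\log\|\psi_l|_{N^{cs}}\|$ at $\varphi_{il}(a)$ and at $\varphi^{Y_n}_{il}(p_n)$ need not be close for those $i$ at which the orbit passes near $Sing(X)$, and your uniform estimate fails exactly there.

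The paper's argument is designed to sidestep this. Instead of a pointwise comparison uniform in $i$, it fixes a crude uniform bound $K$ on $|\log\|\psi^{Y}_l|_{N^{cs}}\||$ over the whole isolating block, and then \emph{measures} how often the orbit of the generic point $a$ enters a small neighborhood $V$ of $Sing(X)\cap\Lambda$. Since $\rho_*\mu(Sing(X))=0$ one may pick $V$ with $\rho_*\mu(V)<\eta/(6K)$; genericity of $a$ gives the same bound for the empirical frequency. For the (large majority of) indices $i$ with $\varphi_{il}(a)\notin V$, the continuity argument you want does apply (this is exactly where the $\delta$ in the paper's proof is chosen), giving an $\eta/3$-comparison; the remaining sparse indices contribute at most $2K\cdot\eta/(6K)=\eta/3$; and the a priori lower bound on the $a$-average is $-\eta/3$. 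Summing gives $>-\eta$. So the gap in your proposal is the unjustified uniform comparison near singularities, and the missing ingredient is the measure-theoretic control of the time spent near $Sing(X)$, which is where genericity of $a$ for $\rho_*\mu$ (and $\rho_*\mu(Sing(X))=0$) is actually used.
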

\begin{proof}[Proof of the Claim 1]
Take $K>0$ be a uniform bound of $\log\|\psi_l^{Y}|_{N^{cs}(x)}\|$ on $x\in\Lambda_Y\setminus Sing(Y)$, that is, for any $Y\in\mathcal{U}$ and any $x\in\Lambda_Y\setminus Sing(Y)$, one has $$|\log\|\psi_l^{Y}|_{N^{cs}(x)}\||<K.$$ Since $\rho_*\mu(Sing(X)\cap\Lambda)=0$, we can take an open neighborhood $V$ of $Sing(X)\cap\Lambda$ such that $\rho_*\mu(\partial V)=0$ and $\rho_*\mu (V)<\frac{\eta}{6K}$. Since $a$ is a generic point of $\rho_*\mu$, we have $$\frac{1}{n}\sum_{i=0}^{n-1}\delta _{\varphi_{il}(a)}\to \rho_*\mu$$
in the weak$*$-topology, where $\delta_x$ denotes the atom measure at $x$. Denote $\frac{1}{n}\sum_{i=0}^{n-1}\delta_{\varphi_{il}(a)}$ by $\mu_n$ for all $n\in\mathbb{N}$. Then we can find $N_0>0$ such that for any $n>N_0$, we have $\mu_n(V)<\frac{\eta}{6K}$. This implies
$$\frac{\sharp\{0\leq i<n: \varphi_{il}(a)\in V\}}{n}<\frac{\eta}{6K}$$
for all $n\geq N_0$. From the estimation $(5)$ we can also take $N_0$ with the property
$$\frac{1}{n}\sum_{i=0}^{n-1}\log\|\psi_l|_{N^{cs}({\varphi_{il}(a)})}\|>-\frac{\eta}{3}$$
for all $n>N_0$.
By the continuity of $\psi_t$ and bundle $N^{cs}$ we can take $N_1>0$ and $\delta>0$ such that for any $n>N_1$ and any $x\in\Lambda\setminus V$ and $y\in\Lambda_{Y_n}\setminus V$, if $d(x,y)<\delta$, then $$|\log\|\psi_l|_{N^{cs}(x)}\|-\log\|\psi_l^Y|_{N^{cs}(y)}\||<\frac{\eta}{3}.$$
Without loss of generality, we can assume that for any $n>N_1$, we have $[\pi(p_n)/l-1]>N_0$ and
$$d(\varphi_{il}(a),\varphi^Y_{il}(p_n))<\delta$$
for all $i=0, 1, \cdots, [\pi(p_n)/l-1]$. For shortness of notations, denote $[\pi(p_n)/l]$ by $k_n$. Then we have
$$\frac{1}{k_n}\sum_{i=0}^{k_n-1}\log\|\psi_l|_{N^{cs}({\varphi^{Y_n}_{il}(p_n)})}\|$$
$$=\frac{1}{k_n}(\sum_{0\leq i<k_n, \varphi_{il}(a)\in V}\log\|\psi_l|_{N^{cs}({\varphi^{Y_n}_{il}(p_n)})}\|+\sum_{0\leq i<k_n, \varphi_{il}(a)\notin V}\log\|\psi_l|_{N^{cs}({\varphi^{Y_n}_{il}(p_n)})}\|)$$
$$\geq \frac{1}{k_n}(\sum_{i=0}^{k_n-1}\log\|\psi_l|_{N^{cs}({\varphi_{il}(a)})}\|-\sum_{0\leq i<k_n, \varphi_{il}(a)\in V}2K-\sum_{0\leq i<k_n, \varphi_{il}(a)\notin V}\frac{\eta}{3})$$
$$>-\frac{\eta}{3}-\frac{\sharp\{0\leq i<n: \varphi_{il}(a)\in V\}}{k_n}\cdot 2K-\frac{\eta}{3}\geq -\eta.$$
for all $n>N_1$. This ends the proof of the claim
\end{proof}

This claim contradicts with the condition $(A1)$ on the periodic orbits of $Y$ close to $X$. This proves that $h_t^s\cdot\psi_t|_{N^{cs}}$ is contracting. Similarly we can prove that $h_t^u\cdot\psi_t|_{N^{cu}}$ is expanding. This ends the proof of Theorem \ref{Thm:multisingularpartiallyhyperbolic}.
\end{proof}

At the ending of this section, we insert a conjecture here. Similar to the definition of hyperbolic or partially hyperbolic diffeomorphisms, we can say a vector field $X$ is \emph{multisingular partially hyperbolic} if the chain-recurrence set of $X$ can be split into finitely many compact invariant sets such that each one admits a multisingular partially hyperbolic splitting, whose center bundle can be split into one-dimensional dominated bundles for the linear Poincar\'e flow. Inspired by \cite{CSY15}, one can even have the following conjecture:

\begin{Conj}\label{Con:singular-partially}
Any vector field can be either accumulated by  ones with a homoclinic tangency, or  accumulated by a multisingular partially hyperbolic vector field.
\end{Conj}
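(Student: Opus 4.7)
The plan is to work inside a suitable residual subset $\cR\subset\cX^1(M)$ and mimic, for singular flows, the strategy Crovisier--Sambarino--Yang used for diffeomorphisms in \cite{CSY15}. I would fix $X\in\cR\setminus\overline{HT}$ and aim to show that $\CR(X)$ decomposes into finitely many chain recurrent classes, each one carrying a multisingular partially hyperbolic splitting with one-dimensional center subbundles in the sense of Definition~\ref{Def:multisingular-partial}. Once this is established for a generic $X\notin\overline{HT}$, a standard Baire argument (using density of $\cR$ inside $\cX^1(M)\setminus\overline{HT}$ and openness of the class of multisingular partially hyperbolic vector fields in a suitable sense) delivers the conjectured dichotomy: any vector field is either in $\overline{HT}$ or in the closure of the multisingular partially hyperbolic ones.

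The first step is to reduce to a class-by-class analysis. Items~5 and~6 of Proposition~\ref{Pro:generic} guarantee that, generically, every chain recurrent class is a periodic limit and isolated chain transitive classes are robustly chain transitive. For a class $C$ containing a hyperbolic periodic orbit of index $i$, Item~3 of Proposition~\ref{Pro:generic} makes $C$ a Hausdorff limit of nearby index-$i$ periodic orbits, so by Lemma~\ref{Lem:limit-dominated} its normal bundle admits an index-$i$ dominated splitting. Setting $\alpha$ and $\beta$ to be the minimal and maximal indices of periodic limits inside $C$, exactly as in the proof of Theorem~\ref{Thm:multisingularpartiallyhyperbolic}, one obtains the finest dominated splitting $N^{cs}\oplus N_1\oplus\cdots\oplus N_{\beta-\alpha}\oplus N^{cu}$ with one-dimensional middle bundles via Lemma~\ref{Lem:indexcomplete}. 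The singularity analysis of Lemma~\ref{Lem:tildelambdaatsingularity} and Lemma~\ref{Lem:contractingatsingularity} then applies verbatim, and the construction of reparametrizing cocycles from $\|X\|$ near each singularity, combined with the ergodic closing lemma argument at the end of the proof of Theorem~\ref{Thm:multisingularpartiallyhyperbolic}, yields the required contraction on $N^{cs}$ and expansion on $N^{cu}$.

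Two cases require genuinely new input. An \emph{aperiodic} class contains no periodic orbit; the diffeomorphism argument of \cite{CSY15} then shows such a class must be hyperbolic, after invoking Ma\~n\'e's ergodic closing lemma to produce approximating periodic orbits and transferring the dominated splitting. In the flow setting, a $C^1$-generic aperiodic class should contain no singularity (since by Lemma~\ref{Lem:periodiclimitfromsingularity} a singularity with nonzero saddle value in a chain recurrent class forces a periodic limit inside it, and the Kupka--Smale condition plus Lemma~\ref{Lem:saddlevalueperturbation} rule out the remaining possibilities generically), reducing the aperiodic case to the regular setting. Finiteness of the decomposition is forced because the triples $(\alpha,\beta,\text{singularity set})$ attached to classes are bounded by $\dim M$ and by the finite number of singularities of a Kupka--Smale vector field.

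The hard part, and the reason the statement is left as a conjecture, is controlling chain recurrent classes that contain singularities of \emph{several different indices}. The example of da Luz \cite{daL18} shows that without robust transitivity, singularities of distinct indices can coexist in a single robustly chain transitive class, so the uniqueness-of-index argument of Theorem~\ref{Thm:uniqueindex}, which relied on Lemma~\ref{Lem:periodicinter} and the connecting lemma in an essential way, does not transfer. In such a class, a single pair $(\alpha,\beta)$ cannot dominate the whole splitting, and Definition~\ref{Def:multisingular-partial} has to be enlarged to allow the dimensions of $N^{cs}$ and $N^{cu}$ to vary across the class, piloted by different reparametrizing cocycles attached to different groups of singularities. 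Finding the right global notion and proving that, $C^1$-generically away from homoclinic tangencies, every chain recurrent class admits it, is the principal obstacle.
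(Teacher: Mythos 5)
The statement you were asked to prove is explicitly labelled a \emph{conjecture} in the paper, and the authors offer no proof of it; they only motivate it by analogy with \cite{CSY15} and by the intermediate results of Sections~4 and~5 (Theorem~\ref{Thm:multisingularpartiallyhyperbolic} and Proposition~\ref{Pro:contractingbundle}), which handle isolated non-trivial transitive sets but not arbitrary chain recurrent classes. So there is no ``paper's own proof'' to compare against.

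Your submission is, commendably, not a proof either but a candid plan, and the plan correctly identifies the actual obstruction: the paper's machinery (Lemma~\ref{Lem:indexcomplete}, Lemma~\ref{Lem:tildelambdaatsingularity}, Lemma~\ref{Lem:contractingatsingularity} and the cocycle construction in Theorem~\ref{Thm:multisingularpartiallyhyperbolic}) all rest on the class being transitive and having a periodic limit of every index between $\alpha$ and $\beta$; the uniqueness-of-index result (Theorem~\ref{Thm:uniqueindex}) uses robust transitivity through Lemma~\ref{Lem:periodicinter}, and da Luz's example \cite{daL18} shows this fails for mere robustly chain transitive classes. You also correctly flag the aperiodic-class case as needing the full CSY machinery transferred to flows. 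Both points match the paper's own discussion (the paragraph after the conjecture cites da Luz precisely to explain why hyperbolicity is the wrong target and a weaker notion must be conjectured). In short: you have not produced a proof, but you have accurately diagnosed why none exists yet and where the gap lies, which is as much as one can say about an open conjecture. The one thing to make more explicit is that even Definition~\ref{Def:multisingular-partial} as stated may be too rigid for classes with singularities of mixed indices — your suggestion to let $\dim N^{cs}$, $\dim N^{cu}$ vary across the class goes beyond the paper's own definition and would itself need to be formalized before the conjecture could even be attacked.
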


{~Note that from the results of da Luz, hyperbolicity and homoclinic tangencies may not be good dichotomy for higher dimensional singular flows. So in the above conjecture, we put the notion ``multisingular partial hyperbolicity'' and homoclinic tangencies as a dichotomy.}

\section{Partial hyperbolicity}

In this section we will prove Theorem \ref{Thm:partial-hyperbolicity}. We give a name for a special case of multisingular partially hyperbolicity before we give the proof of Theorem \ref{Thm:partial-hyperbolicity}.

\begin{Definition}\label{Def:singular-partial}
Let $\Lambda$ be a compact invariant set. We say that $\Lambda$ is
\emph{singular partially hyperbolic}, if $N_{\Lambda\setminus{
Sing}(X)}$ has a dominated splitting $N^{cs}\oplus N_1\oplus N^{cu}$
w.r.t. the linear Poincar\'e flow, and if one of the following cases
occur:
\begin{enumerate}

\item There are constants $C>0$ and $\lambda>0$ such that for any
regular point $x\in\Lambda$ and any $t>0$, one has
$$\frac{\|\psi_t|_{N^{cs}(x)}\|}{\|\Phi_t|_{<X(x)>}\|}\le C{\rm e}^{-\lambda t},~~~\frac{\|\psi_{-t}|_{N^{cu}(x)}\|}{\|\Phi_{-t}|_{<X(x)>}\|}\le C{\rm e}^{-\lambda t}.$$

\item There are constants $C>0$ and $\lambda>0$ such that for any
regular point $x\in\Lambda$ and any $t>0$, one has
$$\frac{\|\psi_t|_{N^{cs}(x)}\|}{\|\Phi_t|_{<X(x)>}\|}\le C{\rm e}^{-\lambda t},~~~\|\psi_{-t}|_{N^{cu}(x)}\|\|\Phi_{-t}|_{<X(x)>}\|\le C{\rm e}^{-\lambda t}.$$

\item There are constants $C>0$ and $\lambda>0$ such that for any
regular point $x\in\Lambda$ and any $t>0$, one has
$$\|\psi_t|_{N^{cs}(x)}\|\|\Phi_t|_{<X(x)>}\|\le C{\rm e}^{-\lambda t},~~~\frac{\|\psi_{-t}|_{N^{cu}(x)}\|}{\|\Phi_{-t}|_{<X(x)>}\|}\le C{\rm e}^{-\lambda t}.$$

\end{enumerate}

\end{Definition}

\begin{Remark}
$(1)$ Liao \cite{Lia89} defined the rescaled linear Poincar\'e flow $\psi_t^*$, which
is defined for any regular point $x$ and any vector $v\in N_x$ by
the following way:
$$\psi_t^*(v)=\frac{\psi_t(v)}{\|\Phi_t|_{<X(x)>}\|}=\frac{|X(x)|\psi_t(v)}{|X(\varphi_t(x))|}.$$
Thus the condition
$\frac{\|\psi_t|_{N^{cs}(x)}\|}{\|\Phi_t|_{<X(x)>}\|}\le C{\rm
e}^{-\lambda t}$ is equivalent to say that $N^{cs}$ is contracting
w.r.t. $\psi_t^*$.

$(2) $ The notion was got in the spirit of singular hyperbolicity.  It is proved in \cite{WWY17} that if  $\Lambda$ is a transitive set and every singularity in $\Lambda$ is hyperbolic, and $\Lambda$ admits a singular-partially
hyperbolic splitting without neutral sub-bundles, then $\Lambda$ is singular hyperbolic.
\end{Remark}

One can verify the singular partially hyperbolic splitting for a nontrivial isolated transitive set with homogenous index on singularities when we consider a $C^1$-generic vector field away from ones with a homoclinic tangency.

\begin{Theorem}\label{Thm:singularpartiallyhyperbolic}
Let $X\in(\mathcal{X}^1(M)\setminus\overline{HT})\cap \mathcal{R}$ and $\Lambda$ be a nontrivial isolated transitive set of $X$. If all singularities in $\Lambda$ have the same index, then $\Lambda$ admit a singular-partially
hyperbolic splitting $N^{cs}\oplus N_1\oplus \cdots\oplus N_k\oplus
N^{cu}$, where $\dim N_i=1$ for any $1\le i\le k$.
\end{Theorem}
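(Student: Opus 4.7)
The plan is to apply Theorem~\ref{Thm:multisingularpartiallyhyperbolic} and then show that, under the extra assumption of a common index for all singularities, the reparametrizing cocycles constructed in its proof agree with $\|\Phi_t|_{<X(x)>}\|^{\pm 1}$ up to bounded multiplicative factors, so that one of the three alternatives of Definition~\ref{Def:singular-partial} is automatically satisfied.

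First I would invoke Theorem~\ref{Thm:multisingularpartiallyhyperbolic} to obtain the multisingular partially hyperbolic splitting $N^{cs}\oplus N_1\oplus\cdots\oplus N_k\oplus N^{cu}$ together with the explicit cocycles constructed in its proof, namely
\[
h_t^s(x) \;=\; \prod_{\sigma\in S_1} h_t^\sigma(x)\;\cdot\;\prod_{\sigma\in S\setminus S_1}(h_t^\sigma(x))^{-1},
\]
and the analogous $h_t^u$ with $S_1$ replaced by $S_2$, where $S_1=\{\sigma\in\Lambda:{\rm Ind}(\sigma)=\alpha\}$, $S_2=\{\sigma\in\Lambda:{\rm Ind}(\sigma)=\beta+1\}$, and $h_t^\sigma(x)=k^\sigma(\varphi_t(x))/k^\sigma(x)$ with $k^\sigma\equiv\|X\|$ on $V_\sigma$ and $k^\sigma\equiv c$ outside $V_\sigma$. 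The hypothesis that every singularity of $\Lambda$ shares a common index $s$ forces $S_1\in\{\emptyset,S\}$ and $S_2\in\{\emptyset,S\}$; moreover, by Lemma~\ref{Lem:indicesofsingularity} one cannot have $s=\alpha$ and $s=\beta+1$ simultaneously (since $\alpha\leq\beta$), so at most one of $S_1=S$, $S_2=S$ holds. In each surviving configuration the cocycle $h_t^s$ (resp.\ $h_t^u$) collapses to either $\prod_{\sigma\in S}h_t^\sigma$ or $\prod_{\sigma\in S}(h_t^\sigma)^{-1}$.

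The key technical observation is then that $\prod_{\sigma\in S}h_t^\sigma(x)$ is comparable with $\|\Phi_t|_{<X(x)>}\|$ up to a globally bounded factor. Setting $k(x)=\prod_{\sigma\in S}k^\sigma(x)$, the neighborhoods $V_\sigma$ are pairwise disjoint (being distinct connected components of $\{\|X\|<c\}$), so $k(x)=\|X(x)\|\cdot c^{|S|-1}$ whenever $x\in\bigcup_\sigma V_\sigma$ and $k(x)=c^{|S|}$ otherwise, while $\|X(x)\|\in[c,\max_\Lambda\|X\|]$ off $\bigcup_\sigma V_\sigma$. Hence $k(x)/\|X(x)\|$ is bounded above and below by positive constants on $\Lambda\setminus{\rm Sing}(X)$, which gives
\[
\frac{\prod_{\sigma\in S}h_t^\sigma(x)}{\|\Phi_t|_{<X(x)>}\|}
\;=\;\frac{k(\varphi_t(x))/\|X(\varphi_t(x))\|}{k(x)/\|X(x)\|}
\]
uniformly bounded between two positive constants. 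Absorbing this bounded factor into the constant $C$, the multisingular partial hyperbolicity estimates translate term by term into corresponding singular partial hyperbolicity estimates, where the appearance of $\|\Phi_t|_{<X>}\|$ as a numerator or denominator is dictated by the sign of the exponent in the product expressing $h_t^s$ (resp.\ $h_t^u$).

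A short case analysis then produces the desired conclusion. If $s=\alpha$ (so $s<\beta+1$), then $h_t^s\asymp\|\Phi_t|_{<X>}\|$ and $h_t^u\asymp\|\Phi_t|_{<X>}\|^{-1}$, yielding case~(3) of Definition~\ref{Def:singular-partial}; symmetrically, if $s=\beta+1$ (so $s>\alpha$), one lands in case~(2); if $\alpha<s\leq\beta$ or if $\Lambda$ contains no singularities (in which case $\|\Phi_t|_{<X>}\|$ is bounded on $\Lambda$ and $h_t^s=h_t^u=1$), both cocycles behave like $\|\Phi_t|_{<X>}\|^{-1}$ and case~(1) applies. The only real obstacle is the bookkeeping underlying the comparison between $k(x)$ and $\|X(x)\|$; once that comparison is secured, the theorem is a direct corollary of Theorem~\ref{Thm:multisingularpartiallyhyperbolic}.
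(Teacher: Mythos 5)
Your proposal is correct and follows essentially the same strategy as the paper: invoke Theorem~\ref{Thm:multisingularpartiallyhyperbolic} for the splitting and cocycles, observe that the common-index hypothesis forces $S_1\in\{\emptyset,S\}$ and $S_2\in\{\emptyset,S\}$ (with at most one equal to $S$), reduce each cocycle to $\prod_{\sigma\in S}h_t^\sigma$ or its inverse, and then compare $\prod_\sigma k^\sigma$ with $\|X(\cdot)\|$ up to bounded multiplicative constants to land in one of the three alternatives of Definition~\ref{Def:singular-partial}. The paper performs the identical three-way case split (including handling the singularity-free case trivially), so nothing further is needed.
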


\begin{proof}
In Theorem \ref{Thm:multisingularpartiallyhyperbolic} we have proved that there is a mutisingular partially hyperbolic splitting $N_{\Lambda\setminus Sing(x)}=N^{cs}\oplus N_1\oplus \cdots\oplus N_k\oplus N^{cu}$ with two cocycles $h_t^s, h_t^u$ such that $h_t^s\cdot \psi_t|_{N^{cs}}$ is contracting and $h_t^u\cdot\psi_t|_{N^{cu}}$ is expanding. Under the additional assumption that all singularities in $\Lambda$ have the same index, we will prove that this splitting is a singular partially hyperbolic splitting satisfying Theorem \ref{Thm:singularpartiallyhyperbolic}.

Without loss of generality, we assume that there exists at least one singularity in $\Lambda$. Then we have three cases: (1) $\alpha<{\rm Ind}(\sigma)<\beta+1$ for all $\sigma\in\Lambda\cap Sing(X)$; (2) $\alpha<{\rm Ind}(\sigma)=\beta+1$ for all $\sigma\in\Lambda\cap Sing(X)$; (3) $\alpha={\rm Ind}(\sigma)<\beta+1$ for all $\sigma\in\Lambda\cap Sing(X)$, where $\alpha, \beta$ is the minimal and maximal number of $$\{i: \Lambda \text{ cotains an } i-\text{periodic limit}\}.$$

\begin{Claim}
If $\alpha<{\rm Ind}(\sigma)<\beta+1$ for all $\sigma\in\Lambda\cap Sing(X)$, then Condition 1 of Definition \ref{Def:singular-partial} will be satisfied.
\end{Claim}
\begin{proof}[Proof of the Claim]
Let us recall the constructions of $h_t^s, h_t^u$. We know that
$$h_t^s(x)=(\prod_{\sigma\in S}h_t^\sigma)^{-1}= h_t^u(x)$$
where $h_t^\sigma$ is the pragmatic cocycle associated to $\sigma$ and $S=\Lambda\cap Sing(X)$. Here $h_t^\sigma$ was defined by the following way: we defined a positive continuous function $k^\sigma: \Lambda\setminus Sing(x)\to \mathbb{R}$ such that $k^\sigma(x)=\|X(x)\|$ for $x\in V(\sigma)$ and $k^{\sigma}(x)\equiv c$ for $x\notin V(\sigma)$ where $V(\sigma)$ is an isolated neighborhood of $\sigma$ and $c$ is a positive constant, then set $h_t^\sigma(x)=\frac{k^\sigma(\varphi_t(x))}{k^\sigma(x)}$. Now we define a positive continuous function $k:\Lambda\setminus Sing(x)\to \mathbb{R}$ by letting
$$k(x)=\|X(x)\|, ~~\forall x\in \cup_{\sigma\in S}V(\sigma); ~~~~k(x)=c, ~~\forall x\notin \cup_{\sigma\in S}V(\sigma).$$
By a direct computation we know that
$$h_t^s(x)=(\frac{k(\varphi_t(x))}{k(x)})^{-1}=h_t^u(x), ~~\forall x\in \Lambda\setminus Sing(x), t\in\mathbb{R}.$$
There exist two positive constants $$c_1= \frac{\max\{\|X(x)\|:x\in\Lambda\setminus(\cup_{\sigma\in S}V(\sigma))\}}{c}, c_2= \frac{c}{\min\{\|X(x)\|:x\in\Lambda\setminus(\cup_{\sigma\in S}V(\sigma))\}}$$ such that for any $x\in\Lambda\setminus Sing(X)$, one has
$$c_2\|X(x)\|\leq k(x)\leq c_1\|X(x)\|.$$
So we have
$$c_1^{-1}c_2\|\Phi_t|_{<X(x)>}\|=c_1^{-1}c_2\frac{\|X(\varphi_t(x))\|}{\|X(x)\|}\leq h_t^s(x)~~~~~~~~~~~~~~~~~~~~~~~~~~~~~~~~~~~$$
$$~~~~~~~~~~~~~~~~~~~~~~~~~~~~~~~~=h_t^u(x)\leq c_1c_2^{-1}\frac{\|X(\varphi_t(x)\|}{\|X(x)\|}=c_1c_2^{-1}\|\Phi_t|_{<X(x)>}\|.$$
From $h_t^s(x)\cdot\psi_t|_{N^{cs}}$ and $h_t^u\cdot\psi_t|_{N^{cu}}$ is contracting and expanding respectively we know that there exist $C>1, \lambda>0$ such that for any $x\in\Lambda\setminus Sing(X)$ and any $t>0$,
$$h_t^s(x)\cdot\|\psi_t|_{N^{cs}(x)}\|\leq C{\rm e}^{-\lambda t} \text{ and } h_{-t}^u(x)\cdot\|\psi_{-t}|_{N^{cu}(x)}\|\leq C{\rm e}^{-\lambda t}.$$
Thus we have
$$\frac{\|\psi_t|_{N^{cs}(x)}\|}{\|\Phi_t|_{<X(x)>}\|}\le c_1c_2^{-1}C{\rm e}^{-\lambda t},~~~\frac{\|\psi_{-t}|_{N^{cu}(x)}\|}{\|\Phi_{-t}|_{<X(x)>}\|}\le c_1c_2^{-1}C{\rm e}^{-\lambda t}.$$
This proves that Condition 1 of Definition \ref{Def:singular-partial} is satisfied.
\end{proof}

Similarly, in the case of $\alpha<{\rm Ind}(\sigma)=\beta+1$ for all $\sigma\in\Lambda\cap Sing(X)$ we can take a positive continuous map $k:\Lambda\setminus Sing(X)\to\mathbb{R}$ defined by
$$k(x)=\|X(x)\|, ~~\forall x\in \cup_{\sigma\in S}V(\sigma); ~~~~k(x)=c, ~~\forall x\notin \cup_{\sigma\in S}V(\sigma),$$
such that
$$h_t^s(x)=\frac{k(\varphi_t(x))}{k(x)}, ~~h_t^u(x)=(\frac{k(\varphi_t(x))}{k(x)})^{-1}$$
for all $x\in\Lambda\setminus Sing(X)$ and $t\in\mathbb{R}$, where $V(\sigma)$ are isolated neighborhoods for the hyperbolic singularities $\sigma\in\Lambda$. Then by the fact that $h_t^s(x)\cdot\psi_t|_{N^{cs}}$ is contracting and $h_t^u\cdot\psi_t|_{N^{cu}}$ is expanding we can find $C>1, \lambda>0$ such that
$$\frac{\|\psi_t|_{N^{cs}(x)}\|}{\|\Phi_t|_{<X(x)>}\|}\le C{\rm e}^{-\lambda t},~~~\|\Phi_{-t}|_{<X(x)>}\|\cdot\|\psi_{-t}|_{N^{cu}(x)}\|\le C{\rm e}^{-\lambda t}$$
for all $x\in\Lambda\setminus Sing(X)$ and $t>0$. This proves that Condition 2 in Definition \ref{Def:singular-partial} is satisfied in this case.

Similarly, in the case of $\alpha={\rm Ind}(\sigma)<\beta+1$ for all $\sigma\in\Lambda\cap Sing(X)$, Condition 3 in Definition \ref{Def:singular-partial} will be satisfied. Note that we already have $\dim N_i=1$ for any $1\le i\le k$. This proves that in both cases the splitting $N^{cs}\oplus N_1\oplus \cdots\oplus N_k\oplus
N^{cu}$ is the singular partially hyperbolic splitting of the theorem. This ends the proof of Theorem \ref{Thm:singularpartiallyhyperbolic}.
\end{proof}

Here we insert a criterion on partially hyperbolic splitting whose proof is in \cite{WWY17}. See also \cite[Section 2.4]{GaY18}.

\begin{Lemma}\label{Lem:critiriononpartiallyhyp}
Assume $\Lambda$ is a nontrivial transitive set and contains no nonhyperbolic singularities. If there is a dominated splitting $N_{\Lambda\setminus Sing(X)}=N^{cs}\oplus N^{cu}$ w.r.t. $\psi_t$ with constants $C>1,\lambda>0$ such that
$$\|\Phi_t|_{<X(x)>}\|^{-1}\cdot \|\psi_t|_{N^{cs}(x)}\|\leq C{\rm e}^{-\lambda t}$$
for all $x\in\Lambda\setminus Sing(X)$ and $t>0$, then there is a partially hyperbolic splitting $T_\Lambda M=E^{ss}\oplus F$ w.r.t. $\Phi_t$ with $\dim E^{ss}=\dim N^{cs}$.
\end{Lemma}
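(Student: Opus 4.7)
\textbf{Plan for proving Lemma~\ref{Lem:critiriononpartiallyhyp}.}

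The plan is to lift the splitting $N^{cs}\oplus N^{cu}$ on the normal bundle to a partially hyperbolic splitting of $T_\Lambda M$ by absorbing the flow direction. First observe that on $\Lambda\setminus Sing(X)$ the bundle $G:=N^{cs}\oplus <X>$ is $\Phi_t$-invariant: for $v\in N^{cs}$, the decomposition $\Phi_tv=\psi_tv+(\text{component along }<X(\varphi_t(x))>)$ together with $\psi_t(N^{cs})=N^{cs}$ gives $\Phi_tv\in G$. Inside $G$, the line bundle $<X>$ is $\Phi_t$-invariant with norm $\|\Phi_t|_{<X(x)>}\|$, and the induced action on the quotient $G/<X>\cong N^{cs}$ is exactly $\psi_t|_{N^{cs}}$. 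The hypothesis $\|\Phi_t|_{<X(x)>}\|^{-1}\|\psi_t|_{N^{cs}(x)}\|\leq Ce^{-\lambda t}$ is precisely the statement that, inside $G$, the quotient $G/<X>$ is exponentially dominated by the sub-line-bundle $<X>$.

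Over the regular part I would then apply the standard graph-transform / Banach fixed point argument on bounded linear sections $N^{cs}\to <X>$ to produce a unique continuous $\Phi_t$-invariant complement $E^{ss}\subset G$ of $<X>$, with $\dim E^{ss}=\dim N^{cs}$. The projection $E^{ss}\to N^{cs}$ along $<X>$ is a bundle isomorphism conjugating $\Phi_t|_{E^{ss}}$ to $\psi_t|_{N^{cs}}$, and by the domination the angle between $E^{ss}$ and $<X>$ is bounded below, so the norms $\|\Phi_t|_{E^{ss}}\|$ and $\|\psi_t|_{N^{cs}}\|$ are uniformly comparable on $\Lambda\setminus Sing(X)$.

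Next I would extend $E^{ss}$ continuously to each singularity $\sigma\in\Lambda\cap Sing(X)$, which is hyperbolic with splitting $T_\sigma M=E^s_\sigma\oplus E^u_\sigma$. For $x_n\in\Lambda\setminus Sing(X)$ with $x_n\to\sigma$, any subsequential limit $E=\lim E^{ss}(x_n)$ has dimension $\dim N^{cs}$, and the claim is $E\subset E^s_\sigma$. The argument is by contradiction: a nonzero $E^u_\sigma$-component of some $w\in E$ forces $\|\Phi_tw\|$ to grow exponentially in forward time, while approximating $w$ by $w_n\in E^{ss}(x_n)$ and using the conjugacy of $\Phi_t|_{E^{ss}}$ with $\psi_t|_{N^{cs}}$ together with the rescaled-contraction hypothesis $\|\Phi_t|_{<X>}\|^{-1}\|\psi_t|_{N^{cs}}\|\leq Ce^{-\lambda t}$ controls $\|\Phi_tw_n\|$ by $\|X\|$-factors that contradict exponential growth. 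Uniqueness of the limit, hence a well-defined continuous $E^{ss}(\sigma)$, follows from the uniqueness in the dominated splitting on $G$ together with the hyperbolic structure at $\sigma$.

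Finally, to obtain uniform contraction of $\Phi_t|_{E^{ss}}$ and a dominated complement $F$ with $T_\Lambda M=E^{ss}\oplus F$, I would paste two regimes: away from $Sing(X)$, $\|X\|$ is bounded above and below, so the contraction of $\psi_t|_{N^{cs}}$ transfers via the bundle isomorphism to exponential contraction of $\Phi_t|_{E^{ss}}$; near a singularity $\sigma$, $E^{ss}$ lies close to a subspace of the hyperbolic $E^s_\sigma$ on which $\Phi_t$ is already exponentially contracting. Compactness of $\Lambda$ and continuity of $E^{ss}$ splice these into uniform constants $C,\lambda>0$, and $F$ is built as the unique $\Phi_t$-invariant complement provided by the symmetric construction (applied to $N^{cu}\oplus <X>$ with the reverse-time domination), or equivalently by a standard cone-field argument. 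The main obstacle I expect is the extension step at singularities, where $<X>$ degenerates as a line bundle and the graph picture literally breaks down; making the matching between the rescaled linear Poincar\'e flow and the hyperbolic stable direction of $\sigma$ rigorous is the technical heart of the lemma.
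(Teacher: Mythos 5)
The paper does not give a proof of Lemma~\ref{Lem:critiriononpartiallyhyp}; it cites \cite{WWY17} and \cite[Section 2.4]{GaY18}, so there is no in-paper argument to compare against. Judging your proposal on its own terms, the graph-transform construction is the right starting point: the bundle $G=N^{cs}\oplus <X>$ over $\Lambda\setminus Sing(X)$ is $\Phi_t$-invariant, $<X>$ is an invariant line sub-bundle whose quotient action is $\psi_t|_{N^{cs}}$, the hypothesis says exactly that $<X>$ dominates this quotient, and the graph transform yields a bounded invariant complement $E^{ss}$ with $\|\Phi_t|_{E^{ss}}\|$ comparable to $\|\psi_t|_{N^{cs}}\|$ up to bounded distortion.

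The genuine gap is in establishing that $\Phi_t|_{E^{ss}}$ is uniformly contracting. The comparison together with the hypothesis gives only $\|\Phi_t|_{E^{ss}(x)}\|\lesssim e^{-\lambda t}\,\|X(\varphi_t(x))\|/\|X(x)\|$, and this is not uniformly small: along an orbit segment leaving a singularity through $W^u(\sigma)$, the factor $\|X(\varphi_t(x))\|/\|X(x)\|$ grows exponentially. Your two-regime pasting is the correct idea, but it is the actual substance of the lemma, and two points are missing. First, orbit segments can enter and exit neighborhoods of singularities many times and the transition losses must be controlled uniformly; a standard way to organize this is via ergodic measures: if $\Phi_t|_{E^{ss}}$ were not uniformly contracting, some invariant ergodic $\mu$ would satisfy $\int\log\|\Phi_1|_{E^{ss}}\|\,d\mu\ge 0$; this is impossible if $\mu$ is an atom at a singularity $\sigma$ with $E^{ss}_\sigma\subset E^s_\sigma$ contracting, and if $\mu$ does not charge $Sing(X)$ then (using integrability of $\log\|X\|$ for flows whose singularities are hyperbolic) invariance gives $\int\log\|\Phi_1|_{<X>}\|\,d\mu=0$, so the rescaled contraction forces $\int\log\|\psi_1|_{N^{cs}}\|\,d\mu\le-\lambda<0$, contradicting the tempered comparison between $\Phi_t|_{E^{ss}}$ and $\psi_t|_{N^{cs}}$. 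Second, identifying the limit $E^{ss}_\sigma$ as a proper subspace of $E^s_\sigma$ tacitly requires $\dim E^s_\sigma>\dim N^{cs}$, and this has to be derived from the hypothesis (via $(W^s(\sigma)\setminus\{\sigma\})\cap\Lambda\neq\emptyset$ and the structure of $N^{cs}$ on the stable limit direction in $\tilde\Lambda$), not taken for granted. Separately, your construction of $F$ by ``the symmetric construction applied to $N^{cu}\oplus <X>$ with reverse-time domination'' is not available, since the hypothesis controls only $N^{cs}$ and says nothing about $\psi_{-t}^*$ on $N^{cu}$; the natural candidate $F=N^{cu}\oplus <X>$ is already $\Phi_t$-invariant for free, and the real work is in proving that $E^{ss}\oplus F$ is dominated for $\Phi_t$ and then extending both bundles continuously over the singular locus.
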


We prove the following proposition which will imply Theorem \ref{Thm:partial-hyperbolicity}.

\begin{Proposition}\label{Pro:contractingbundle}

Let $X\in(\mathcal{X}^1(M)\setminus\overline{HT})\cap \mathcal{R}$ and $\Lambda$ be a nontrivial isolated transitive set of $X$. If all singularities in $\Lambda$ have the same index and there is a singularity
$\sigma$ whose saddle value $I(\sigma)\geq0$, then $\Lambda$ admits a partially hyperbolic splitting
$T_\Lambda M=E^{ss}\oplus F$
w.r.t. the tangent flow, where $E^{ss}$ is contracting under $\Phi_t$.
\end{Proposition}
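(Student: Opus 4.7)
The strategy is to apply the criterion Lemma~\ref{Lem:critiriononpartiallyhyp} to a coarsening of the singular partially hyperbolic splitting produced by Theorem~\ref{Thm:singularpartiallyhyperbolic}. So the only genuine task is to pin down which case of Definition~\ref{Def:singular-partial} occurs, and then to read off the correct estimate on $N^{cs}$.

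Let $\alpha\le\beta$ be the extremal indices of periodic limits of $X$ contained in $\Lambda$, as in Lemma~\ref{Lem:indexcomplete}. By Lemma~\ref{Lem:indicesofsingularity}, any singularity $\tau\in\Lambda$ with $\mathrm{Ind}(\tau)=\alpha$ must have $I(\tau)<0$. Taking the contrapositive: the singularity $\sigma$ given by hypothesis, which satisfies $I(\sigma)\ge 0$, must have $\mathrm{Ind}(\sigma)>\alpha$; and since all singularities in $\Lambda$ share the common index $\mathrm{Ind}(\sigma)$, every $\tau\in\Lambda\cap\mathrm{Sing}(X)$ also has $\mathrm{Ind}(\tau)>\alpha$.

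Next I would invoke Theorem~\ref{Thm:singularpartiallyhyperbolic} to obtain a singular partially hyperbolic splitting
$$N_{\Lambda\setminus\mathrm{Sing}(X)}=N^{cs}\oplus N_1\oplus\cdots\oplus N_k\oplus N^{cu}$$
with $\dim N^{cs}=\alpha$ and $\dim N_i=1$. The case analysis in the proof of that theorem splits according to whether the common index of singularities equals $\alpha$, lies strictly between $\alpha$ and $\beta+1$, or equals $\beta+1$. By the previous paragraph, the common index is $>\alpha$, so only cases (1) and (2) of Definition~\ref{Def:singular-partial} can occur, and in both cases the sub-bundle $N^{cs}$ satisfies
$$\frac{\|\psi_t|_{N^{cs}(x)}\|}{\|\Phi_t|_{\langle X(x)\rangle}\|}\le Ce^{-\lambda t}\qquad\text{for every }x\in\Lambda\setminus\mathrm{Sing}(X),\ t>0,$$
which is precisely the estimate required by Lemma~\ref{Lem:critiriononpartiallyhyp}.

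To finish, I would coarsen the splitting to $N^{cs}\oplus\widehat{N}^{cu}$, where $\widehat{N}^{cu}=N_1\oplus\cdots\oplus N_k\oplus N^{cu}$; this remains a dominated splitting w.r.t.\ $\psi_t$. All singularities in $\Lambda$ are hyperbolic, either by Kupka--Smale genericity (Item 1 of Proposition~\ref{Pro:generic}) or implicitly from having well-defined indices. Applying Lemma~\ref{Lem:critiriononpartiallyhyp} to $N^{cs}\oplus\widehat{N}^{cu}$ then produces a partially hyperbolic splitting $T_\Lambda M=E^{ss}\oplus F$ for the tangent flow with $\dim E^{ss}=\dim N^{cs}=\alpha$ and $E^{ss}$ uniformly contracting. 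Since both the singular partial hyperbolicity and the criterion are already in hand, there is no real obstacle; the only bookkeeping is the index argument ruling out case (3) of Definition~\ref{Def:singular-partial}, where the estimate on $N^{cs}$ would take the weaker form $\|\psi_t|_{N^{cs}(x)}\|\cdot\|\Phi_t|_{\langle X(x)\rangle}\|\le Ce^{-\lambda t}$ rather than the ratio form needed to conclude contractiveness of~$E^{ss}$.
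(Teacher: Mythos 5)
Your proof is correct and follows essentially the same route as the paper: you invoke the singular partial hyperbolicity from Theorem~\ref{Thm:singularpartiallyhyperbolic}, use the saddle-value condition to force $\mathrm{Ind}(\sigma)>\alpha$ (you route this through Lemma~\ref{Lem:indicesofsingularity}, the paper directly through Lemma~\ref{Lem:periodiclimitfromsingularity}, but these encode the same fact), so that only Conditions~1 or~2 of Definition~\ref{Def:singular-partial} can occur, and then feed the resulting ratio estimate on $N^{cs}$ into Lemma~\ref{Lem:critiriononpartiallyhyp}. Your extra remarks about coarsening to a two-bundle splitting and the hyperbolicity of singularities are both correct bookkeeping that the paper leaves implicit.
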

\begin{proof}
By Theorem \ref{Thm:singularpartiallyhyperbolic} we know that under the assumptions, there exists a singular partially hyperbolic splitting $N^{cs}\oplus N_1\oplus\cdots N_k\oplus N^{cu}$. Note here we have $I(\sigma)\geq 0$ for some singularities $\sigma\in\Lambda$. By Lemma \ref{Lem:periodiclimitfromsingularity} we know that either $\alpha<{\rm Ind}(\sigma)<\beta+1$ for all $\sigma\in\Lambda\cap Sing(X)$ or $\alpha<{\rm Ind}(\sigma)=\beta+1$ for all $\sigma\in\Lambda\cap Sing(X)$. In both cases we have
$$\frac{\|\psi_t|_{N^{cs}(x)}\|}{\|\Phi_t|_{<X(x)>}\|}\le C{\rm e}^{-\lambda t},$$
for any $x\in\Lambda\setminus Sing(X)$ and $t>0$ by the proof of Theorem~\ref{Thm:singularpartiallyhyperbolic}. By Lemma \ref{Lem:critiriononpartiallyhyp} we know that $\Lambda$ admits a partially hyperbolic splitting $T_\Lambda M=E^{ss}\oplus F$ w.r.t. $\Phi_t$ such that $\dim E^{ss}=\dim N^{cs}$. This completes the proof.
\end{proof}

\begin{proof}[The proof of Theorem~\ref{Thm:partial-hyperbolicity}]
Let $X\in\mathcal{R}\cap(\mathcal{X}^{1}(M)\setminus \overline{HT})$ and $\Lambda$ be a robustly transitive set of $X$. By Theorem \ref{Thm:uniqueindex}, we know that all singularities in $\Lambda$ are hyperbolic and have the same index. If there is a singularity $\sigma\in\Lambda$ with $I(\sigma)\geq0$ then by Proposition \ref{Pro:contractingbundle} we know that $\Lambda$ is partially hyperbolic with a splitting $E^{ss}\oplus E^{cu}$ where $E^{ss}$ is uniformly contracting. If there is a singularity $\sigma\in\Lambda$ with $I(\sigma)\leq 0$, then by Proposition \ref{Pro:contractingbundle} we know that $\Lambda$ is partially hyperbolic with a splitting $E^{cs}\oplus E^{uu}$ where $E^{uu}$ is uniformly expanding. This proves Theorem \ref{Thm:partial-hyperbolicity}.
\end{proof}

\begin{Acknowledgements}
We would like to thank the referees for their important remarks and careful readings. They have pointed out some references. We also communicated with A. da Luz. Her kind suggestions has helped us to improve this paper a lot.

\end{Acknowledgements}

\vskip 5pt

\noindent Xiao Wen

\noindent School of Mathematics and System Science

\noindent Beihang University, Beijing, 100191, P.R. China

\noindent wenxiao@buaa.edu.cn

\vskip 5pt

\noindent Dawei Yang

\noindent School of Mathematical Sciences, Center for Dynamical Systems and Differential Equations

\noindent Soochow University, Suzhou, 215006, P.R. China

\noindent yangdw1981@gmail.com, yangdw@suda.edu.cn

\end{document}